\DeclareMathOperator{\con}{con}
\DeclareMathOperator{\alphabet}{alph}
\DeclareMathOperator{\Con}{Con}
\DeclareMathOperator{\SCon}{SCon}
\DeclareMathOperator{\Part}{Part}
\DeclareMathOperator{\End}{End}
\DeclareMathOperator{\Aut}{Aut}
\DeclareMathOperator{\I}{I}
\DeclareMathOperator{\Sub}{Sub}
\DeclareMathOperator{\Stab}{Stab}
\DeclareMathOperator{\Orb}{Orb}
\DeclareMathOperator{\Code}{Code}
\newtheorem{theorem}{Theorem}[section]
\newtheorem{proposition}[theorem]{Proposition}
\newtheorem{lemma}[theorem]{Lemma}
\newtheorem{corollary}[theorem]{Corollary}
\begin{document}

\title[Modular elements of the lattices of varieties]{Modular elements of the lattices of varieties of semigroups and epigroups. I}

\author{Vyacheslav Yu. Shaprynski\v{\i}}
\thanks{V. Yu. Shaprynski\v{\i}: Institute of Natural Sciences and Mathematics, Ural Federal University, Ekaterinburg 620000, Russia. \texttt{vshapr@yandex.ru}
\\The research of the first author was supported by the Ministry of Science and Higher Education of the Russian Federation (project FEUZ-2023-0022).}

\author{Dmitry V. Skokov}
\thanks{D. V. Skokov: Department of Mathematics, Bar-Ilan University, Ramat Gan 5290002, Israel. \texttt{dmitry.skokov@gmail.com} \\The research of the second author was supported by the scholarship of the Center for Absorption in Science, the Ministry for Absorption of Aliyah, the State of Israel.}

\date{}

\keywords{Semigroup, epigroup, variety, lattice of subvarieties, special element, modular element}

\subjclass{Primary 20M07, secondary 08B15}

\begin{abstract}
This paper is the first part of a study devoted to description of modular elements in the lattices of semigroup and epigroup varieties. We provide strengthened necessary and sufficient conditions under which a semigroup or epigroup variety constitutes a modular element in its respective lattice. These results refine previously known criteria and lay the groundwork for a complete classification, to be presented in the second part of the study.
\end{abstract}

\maketitle

\section{Introduction}
\label{introduction}

A \emph{variety} is a class of universal algebras of a fixed type that is closed under homomorphic images, subalgebras, and arbitrary direct products. This work continues the long-standing and active study of varieties of semigroups. It is known that the collection of all semigroup varieties forms a lattice, denoted by~$\mathbb{SEM}$, with respect to class-theoretical inclusion. For over six decades, the structure and properties of the lattice $\mathbb{SEM}$ have drawn significant attention from researchers, underscoring its pivotal role in semigroup theory. The foundational results from the early stages of this research are summarized in the survey~\cite{Evans-71}, while a comprehensive overview of more recent developments can be found in~\cite{Shevrin-Vernikov-Volkov-09}.

The structure of $\mathbb {SEM}$ is remarkably intricate. One notable feature is that $\mathbb {SEM}$ contains an anti-isomorphic copy of the partition lattice over a countably infinite set~\cite{Burris-Nelson-71, Jezek-76}. As a consequence, $\mathbb {SEM}$ does not satisfy any non-trivial lattice identities. The study of identities in subvariety lattices of semigroup varieties has been the subject of extensive research, producing numerous profound and impactful results (see~\cite[Section 11]{Shevrin-Vernikov-Volkov-09}). A natural progression from these studies is the exploration of varieties that exhibit what could be described as ‘nice lattice behavior’ within their local neighborhoods. In particular, our focus turns to the study of \emph{special elements} within the lattice $\mathbb {SEM}$, which possess properties connected to lattice identities. While certain classes of special elements have been fully described, others remain only partially understood. Vernikov's 2015 survey~\cite{Vernikov-15} provides a comprehensive summary of the results achieved up to that time.

Here our main goal is to get a complete description of one of the types of special elements in $\mathbb {SEM}$ -- the modular elements. An element $x$ of a lattice $\left\langle {L, \wedge, \vee} \right\rangle$ is called \emph{modular} if $$(\forall y, z \in L) \,\,\, y \le z \longrightarrow (x \vee y) \wedge z = (x \wedge z) \vee y.$$ An equivalent characterization of modular elements was established by Mikhail Volkov in \cite[Proposition 1.1]{Volkov-05}: an element $x$ of a lattice $\langle L, \wedge, \vee \rangle$ is modular if and only if it is not the central element of the non-modular lattice $N_5$ (see Fig.~\ref{Pent}).

\begin{figure}[tbh]
\centering
\begin{tikzpicture}[scale=1.7,
  dot/.style={circle, fill=black, inner sep=1.6pt}
]
  \coordinate (A) at (0,1);
  \coordinate (B) at (-0.7,0.5);
  \coordinate (C) at (-0.7,0);
  \coordinate (D) at (0,-0.5);
  \coordinate (E) at (0.7,0.25); 

  \draw[line width=0.9pt] (A)--(B)--(C)--(D)--(E)--(A);

  \foreach \p in {A,B,C,D,E}{\node[dot] at (\p) {};}

  \draw[-{Latex[length=2.4mm]}, line width=0.9pt]
    (1.25,0.45) .. controls (1.05,0.3) and (1.0,0.28) .. (E);
  \node[anchor=west] at (1.3,0.45) {modular elements cannot be here};
\end{tikzpicture}
\caption{The non-modular lattice $N_5$}
\label{Pent}
\end{figure}
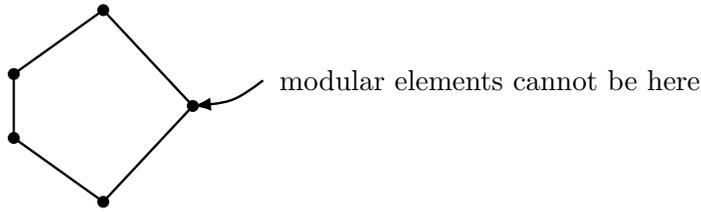

 Among the various types of special elements, modular elements have garnered significant interest. These elements were among the first special elements in $\mathbb {SEM}$ to attract the attention of researchers. Despite this long-standing interest, a complete characterization of modular elements in $\mathbb {SEM}$ has remained an open problem until recently. It is worth noting that the problem of obtaining a complete description of modular elements in the lattice $\mathbb {SEM}$ was explicitly formulated in several papers (see, for instance,~\cite{Vernikov-07}, including the fundamental survey~\cite{Shevrin-Vernikov-Volkov-09}).

The results obtained in the study of the lattice $\mathbb {SEM}$ are often useful for understanding the structure of lattices of varieties of semigroups equipped by an additional unary operation. Such algebras are said to be \emph{unary semigroups}. One natural type of unary semigroups is epigroups. 

A semigroup $S$ is called an \emph{epigroup} if, for any element $x$ of $S$, there is a positive integer $n$ such that $x^n$ is a group element (that is, lies in some subgroup of~$S$). Extensive information about epigroups may be found in the fundamental work~\cite{Shevrin-94} by L.\,N.\,Shevrin and the survey~\cite{Shevrin-05} by the same author. The class of epigroups is very wide. In particular, it includes all periodic semigroups (because some power of each element in such a semigroup lies in some its finite cyclic subgroup) and all completely regular semigroups (in which all elements are group elements). 

The unary operation on an epigroup is defined in the following way. For an element $x$ of a given epigroup, let $e_x$ be the unit element of the maximal subgroup $G$ that contains some power of $x$. It is known (see~\cite{Shevrin-94}, for instance) that $xe_x = e_xx$ and this element lies in $G$. We denote by $\overline{x}$ the inverse element to $xe_x$ in $G$. This element is called the \emph{pseudoinverse} of $x$. The mapping $x\longmapsto\overline x$ defines a unary operation on an epigroup. Throughout this paper, we consider epigroups as algebras with the operations of multiplication and pseudoinversion. This naturally leads to the concept of varieties of epigroups as algebras with these two operations. The idea of examining epigroups within the framework of the theory of varieties was promoted by L.\,N.\,Shevrin in~\cite{Shevrin-94} (see also~\cite{Shevrin-05}). 

The class of all epigroups is not a variety because it is not closed under Cartesian product. This means, for instance, that the lattice of all epigroup varieties does not contain the largest element. This lattice will be denoted by $\mathbb{EPI}$. An examination of the lattice $\mathbb{EPI}$ was initiated in~\cite{Shevrin-94}. An
overview of the first results obtained in this area can be found in~\cite[Section 2]{Shevrin-Vernikov-Volkov-09}.

It is well known and can be easily checked that in every periodic epigroup variety the
operation of pseudoinversion can be expressed in terms of multiplication (see~\cite{Shevrin-94}
or~\cite{Shevrin-05}, for instance). This means that periodic varieties of epigroups can be identified with periodic varieties of semigroups. Thus, the lattices $\mathbb {SEM}$ and $\mathbb{EPI}$ have a
big common sublattice, namely, the lattice of all periodic semigroup varieties.
Results of the mentioned above article~\cite{Jezek-76} immediately imply that even the lattice
of all periodic semigroup varieties contains an anti-isomorphic copy of the partition lattice over a countably infinite set. This means, in particular, that the lattice $\mathbb{EPI}$ also contains the dual to this partition lattice as a sublattice. Therefore,  $\mathbb{EPI}$, as well as $\mathbb {SEM}$, does not satisfy any non-trivial lattice identity.

Recently, a number of articles devoted to the study of special elements in the lattice $\mathbb{EPI}$ have appeared~\cite{Skokov-15, Skokov-16, Shaprynskii-Skokov-Vernikov-16, Shaprynskii-Skokov-Vernikov-19}. The present article continues these investigations. Here our main goal is to get a complete description of modular elements in the lattices $\mathbb{SEM}$ and $\mathbb{EPI}$. For convenience, we refer to a semigroup variety [an epigroup variety] as \emph{modular} if it is a modular element of the lattice $\mathbb{SEM}$ [respectively, of the lattice $\mathbb{EPI}$].

According to the famous formulation of Birkhoff’s Variety Theorem (also known as the HSP Theorem, originally published in~\cite{Birkhoff-35}; see also~\cite{Gratzer-79} and~\cite{Burris-Sankappanavar-81}), a variety of algebras can be defined as the class of all algebraic structures of a fixed signature that satisfy a given set of identities. In this work, we consider identities involving a standard associative binary operation for semigroup varieties, and identities involving both this binary operation and an additional unary operation for epigroup varieties.

Elements of the free unary semigroup of countably infinite rank are called \emph{words}. Words, unlike letters (i.e., elements of the alphabet), are written in bold. A word that does not contain the unary operation is called a \emph{semigroup word}, and it is natural to regard such words as elements of the free semigroup. We use the symbol $\approx$ to denote identities between words, while the symbol  $=$ denotes equality in the free (unary) semigroup or in other standard contexts.

Suppose a letter $x$ does not occur in a word $\mathbf w$. A semigroup $S$ satisfies the identity system $\mathbf wx\approx x\mathbf{w\approx w}$ if and only if $S$ contains a zero element~0 and all values of $\mathbf w$ in $S$ are equal to~0. We adopt the usual convention of writing $\mathbf w\approx 0$ as a short form of such a system and referring to the expression $\mathbf w\approx 0$ as to a single identity. Identities of the form $\mathbf w\approx 0$ and varieties given by such identities are called 0-\emph{reduced}. If $\mathbf u$ is a semigroup word then its length is denoted by $\ell(\mathbf u)$. The \emph{alphabet} of a word $\mathbf w$, i.e., the set of all letters occurring in $\mathbf w$, is denoted by $\alphabet(\mathbf w)$.

We denote by $S_n$ the symmetric group on the set $\{1,2,\dots,n\}$. More generally, $S(X)$ is the symmetric group on a set $X$. We fix notation for the following subgroups of $S_n$: $T_{ij}$ is the group generated by a transposition $(ij)$; $A_n$ is the alternating subgroup of $S_n$; $I_{ij,st}$ is the subgroup of $S_4$ generated by the disjoint transpositions $(ij)$, $(st)$, and their product $(ij)(st)$. The subgroup lattice of a group $G$ is denoted by $\Sub(G)$. For any semigroup (epigroup) variety $\mathbf V$ and any word $\mathbf u$ put $$\Stab_{\mathbf V} (\mathbf u) = \{\sigma\in S(\alphabet(\mathbf u))\mid\mathbf u \approx\sigma(\mathbf u) \,\,\text{holds in}\,\,\mathbf V\}.$$ Here $\sigma(\mathbf u)$ means that the permutation $\sigma$ acts on words by renaming letters. It is clear that $\Stab_{\mathbf V} (\mathbf u)$ is a subgroup of $S(\con(\mathbf u))$. We call this subgroup the $\mathbf V$-\emph{stabilizer} of the word $\mathbf u$.

An identity $\mathbf{u\approx v}$ is called \emph{substitutive} if $\alphabet(\mathbf u)=\alphabet(\mathbf v)$ and the word $\mathbf v$ is obtained from $\mathbf u$ by renaming letters.

Let $F$ be the free (non-unary) semigroup of countably infinite rank, $F^1$ the free monoid. We denote by $\Aut(F)$ and $\End(F)$ the group of automorphisms and the monoid of endomorphisms on $F$ respectively. Let $\mathbf u,\mathbf v\in F$. We write $\mathbf{u\le v}$ if $\mathbf{v=a}\xi(\mathbf u)\mathbf b$ for some $\xi\in\End(F)$ and some $\mathbf a,\mathbf b\in F^1$. We will say that words $\mathbf u$ and $\mathbf v$ are \emph{equivalent} and write $\mathbf u\sim\mathbf v$ if $\mathbf{u\le v}$ and $\mathbf{v\le u}$. It is clear that words $\mathbf u$ and $\mathbf v$ are equivalent if and only if $\mathbf v=\varphi(\mathbf u)$ for some $\varphi\in\Aut(F)$. If $\mathbf{u\le v}$ and $\mathbf{u\not\sim v}$ then we write $\mathbf{u<v}$. We say that words $\mathbf u$ and $\mathbf v$ are \emph{incomparable} and write $\mathbf u\parallel\mathbf v$ if $\mathbf{u\not\le v}$ and $\mathbf{v\not\le u}$.

Let \( \mathbf{T} \) be the trivial variety of semigroups or epigroups. A semigroup variety is called \emph{proper} if it is distinct from the variety of all semigroups. Recall that there is no variety of all epigroups; therefore, there is no real reason to use the term \emph{proper} for epigroup varieties. However, the term may occasionally appear in order to unify the formulations concerning modular elements in the lattices \( \mathbb{SEM} \) and \( \mathbb{EPI} \). It is evident that the variety of all semigroups, being the greatest element in the lattice \( \mathbb{SEM} \), is a modular element of this lattice.

Recall that a semigroup $S$ with a zero element $0$ is said to be a \emph{nilsemigroup} if for
every $s \in S$ there exists a positive integer $n$ such that $s^n = 0$. A semigroup [epigroup]
variety consisting of nilsemigroups is called a \emph{nil-variety}.

By Proposition~\ref{mod nec}, the problem of describing modular elements of the lattices $\mathbb{SEM}$ and $\mathbb{EPI}$ reduces to describing modular nil-varieties. Indeed, every proper modular element in $\mathbb{SEM}$ [respectively, $\mathbb{EPI}$] is the join of either the trivial variety $\mathbf{T}$ or the semilattice variety $\mathbf{SL}$ with a nil-variety. Therefore, the characterization of modular nil-varieties yields a complete description of all modular elements in these lattices.

In order to formulate the characterization, we consider the following conditions on a variety $\mathbf{V}$ of semigroups [respectively, epigroups]:
\begin{itemize}
\item[a)] for each non-substitutive identity $\mathbf u\approx\mathbf v$ which holds in $\mathbf V$, the identities $\mathbf u\approx0$ and $\mathbf v\approx 0$ also hold in $\mathbf V$; 
\item[b)] for each word $\mathbf u$, if $\mathbf V$ does not satisfy the identity $\mathbf u \approx 0$ then $\Stab_{\mathbf V}(\mathbf u)$ is a modular element of the lattice $\Sub(S(\alphabet(\mathbf u)))$;
\item[c)\text{[c')]}] there are no incomparable [non-equivalent] words $\mathbf u$ and $\mathbf v$ such that $\alphabet(\mathbf u) = \alphabet(\mathbf v)$ and one of the following holds
\begin{itemize}
\item $\Stab_{\mathbf V} (\mathbf u), \Stab_{\mathbf V} (\mathbf v) \in \{ T_{12},  T_{23},  T_{13}\}$;
\item $\Stab_{\mathbf V} (\mathbf u) \in \{ T_{12},  T_{23},  T_{13}\}, \Stab_{\mathbf V} (\mathbf v) =  A_3$;
\item $\Stab_{\mathbf V} (\mathbf u), \Stab_{\mathbf V} (\mathbf v) \in \{ I_{12,34},  I_{13,24},  I_{14,23}\}$;
\item $\Stab_{\mathbf V} (\mathbf u) \in \{ I_{12,34},  I_{13,24},  I_{14,23}\}, \Stab_{\mathbf V} (\mathbf v) =  A_4$.
\end{itemize}
\end{itemize}

We can now state our main results.

\begin{theorem}
\label{necessity}
Let $\mathbf{V}$ be a proper variety of semigroups \textup[respectively, epigroups\textup] which is a modular element of the lattice $\mathbb{SEM}$ \textup[respectively, $\mathbb{EPI}$\textup]. Then
\(
\mathbf{V} = \mathbf{M} \vee \mathbf{N},
\)
where $\mathbf{M} \in \{\mathbf{T}, \mathbf{SL}\}$ and $\mathbf{N}$ is a nil-variety satisfying conditions \textup{(a)}, \textup{(b)}, and \textup{(c)} above.
\end{theorem}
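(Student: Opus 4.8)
The plan is to combine the reduction already in hand with Volkov's pentagon criterion (\cite[Proposition~1.1]{Volkov-05}). The decomposition $\mathbf V=\mathbf M\vee\mathbf N$ with $\mathbf M\in\{\mathbf T,\mathbf{SL}\}$ and $\mathbf N$ a nil-variety is exactly the content of Proposition~\ref{mod nec}, which also reduces the modularity of $\mathbf V$ to that of $\mathbf N$; so the whole theorem comes down to the claim that a modular nil-variety $\mathbf N$ satisfies (a), (b), (c). I would prove this by contraposition: from the failure of any one of the three conditions I would manufacture a copy of $N_5$ in which $\mathbf N$ sits at the central position forbidden to modular elements (Fig.~\ref{Pent}). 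In each case the concrete target is a pair of varieties $\mathbf Y\subsetneq\mathbf Z$ with $\mathbf N\wedge\mathbf Y=\mathbf N\wedge\mathbf Z$ and $\mathbf N\vee\mathbf Y=\mathbf N\vee\mathbf Z$; then $\{\mathbf N\wedge\mathbf Y,\mathbf Y,\mathbf Z,\mathbf N,\mathbf N\vee\mathbf Y\}$ is the desired pentagon. Crucially, all witnessing varieties can be chosen to be nil-varieties of semigroups, which lie in the common sublattice of $\mathbb{SEM}$ and $\mathbb{EPI}$; hence one and the same construction settles the semigroup and the epigroup case simultaneously.

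For condition~(a) I would first dispose of the easy sub-case: if a non-substitutive identity $\mathbf u\approx\mathbf v$ holds in the nil-variety $\mathbf N$ with $\alphabet(\mathbf u)\ne\alphabet(\mathbf v)$, then substituting a letter occurring on only one side by a power $x^m$ with $x^m\approx0$ in $\mathbf N$ already forces $\mathbf u\approx\mathbf v\approx0$, so (a) holds automatically. The remaining case is $\alphabet(\mathbf u)=\alphabet(\mathbf v)$ with $\mathbf v$ not a renaming of $\mathbf u$; assume for contradiction that $\mathbf u\not\approx0$ in $\mathbf N$. Here modularity must be invoked: I would let $\mathbf Z$ be a small nil-variety, generated by a finite nilsemigroup, that realizes $\mathbf u\approx\mathbf v$ while keeping $\mathbf u$ nonzero, and let $\mathbf Y$ be the subvariety of $\mathbf Z$ obtained by adjoining $\mathbf u\approx0$. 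The non-substitutivity of $\mathbf u\approx\mathbf v$ is what allows $\mathbf Z$ to be designed so that the meet $\mathbf N\wedge\mathbf Z$ already collapses $\mathbf u$ to $0$ (whence $\mathbf N\wedge\mathbf Y=\mathbf N\wedge\mathbf Z$), while $\mathbf u\not\approx0$ in $\mathbf N$ keeps $\mathbf Y\ne\mathbf Z$ and, together with $\mathbf u\approx\mathbf v$, keeps the two joins equal. Pinning down the finite generating semigroup for $\mathbf Z$ so that all three equalities hold at once is the point requiring care.

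For condition~(b) fix a word $\mathbf u$ with $\mathbf u\not\approx0$ in $\mathbf N$, put $G=S(\alphabet(\mathbf u))$ and $H=\Stab_{\mathbf N}(\mathbf u)$, and suppose $H$ is non-modular in $\Sub(G)$. Then there are subgroups $K\subsetneq L$ forming a pentagon in $\Sub(G)$ with $H$ as lone element, i.e.\ $H\cap K=H\cap L$ and $\langle H,K\rangle=\langle H,L\rangle$. The device I would use is an order-reversing correspondence between an interval of $\Sub(G)$ around $H$ and an interval of subvarieties around $\mathbf N$: passing first to the variety $\widehat{\mathbf N}$ obtained from $\mathbf N$ by discarding the substitutive identities on $\mathbf u$, one sends a subgroup $X$ to the variety $\mathbf N_X$ defined by $\widehat{\mathbf N}$ together with $\{\mathbf u\approx\sigma(\mathbf u):\sigma\in X\}$, so that $\mathbf N_H=\mathbf N$. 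Because $\mathbf u\not\approx0$ in $\mathbf N$, these renaming symmetries are genuinely detectable, the correspondence is injective, and it turns meets and joins of subgroups into joins and meets of varieties. Transporting the subgroup pentagon through it produces a variety pentagon with $\mathbf N=\mathbf N_H$ as lone element. Verifying that the map is an anti-embedding on the five subgroups in play — in particular that $\Stab_{\mathbf N_X}(\mathbf u)=X$ with no collapse — is the delicate point of this part.

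The main obstacle is condition~(c), where the obstruction involves \emph{two} incomparable words $\mathbf u\parallel\mathbf v$ with $\alphabet(\mathbf u)=\alphabet(\mathbf v)$ whose $\mathbf N$-stabilizers fall among the distinguished subgroups $T_{12},T_{23},T_{13},A_3$ of $S_3$ or $I_{12,34},I_{13,24},I_{14,23},A_4$ of $S_4$. These are exactly the positions at which $\Sub(S_3)$ and $\Sub(S_4)$ harbour non-modular configurations built from a transposition (respectively a Klein four-subgroup) together with the even part. I would exploit the incomparability $\mathbf u\parallel\mathbf v$ to impose renaming symmetries on $\mathbf u$ and on $\mathbf v$ \emph{independently}, constructing $\mathbf Y\subsetneq\mathbf Z$ by forcing one distinguished symmetry on $\mathbf u$ and another on $\mathbf v$, chosen so that the meet and the join against $\mathbf N$ both stay fixed while $\mathbf Y\ne\mathbf Z$. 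I expect the real work to be in the bookkeeping: each of the four listed sub-cases must be run separately, the constructed varieties must be checked to remain nil (keeping the argument inside the common sublattice), and in every sub-case one must confirm that $\mathbf u\parallel\mathbf v$ is strong enough to hide the difference between $\mathbf Y$ and $\mathbf Z$ from both the meet and the join with $\mathbf N$.
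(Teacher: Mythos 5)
Your high-level plan (decompose via Proposition~\ref{mod nec}, then argue by contraposition and exhibit $\mathbf N$ as the forbidden element of a pentagon) is the right spirit, and condition (a) is indeed just Proposition~\ref{SEM cmod nil-nec}, which the paper simply cites from earlier work. But there is a genuine gap that runs through all three of your constructions: in each case you defer exactly the step on which the whole proof turns, namely certifying that your hand-built varieties $\mathbf Y\subsetneq\mathbf Z$ really satisfy $\mathbf N\wedge\mathbf Y=\mathbf N\wedge\mathbf Z$ and $\mathbf N\vee\mathbf Y=\mathbf N\vee\mathbf Z$ \emph{inside} $\mathbb{SEM}$. Meets and (especially) joins of varieties are controlled by deductions of identities, and nothing in your sketch controls them. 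The paper's actual proof supplies precisely this missing device: for a ``check set'' $M$ of words closed under letter renaming and containing no word $<$ another, the map $\mathbf X\mapsto\;\sim_{\mathbf X}|_M$ is a surjective lattice anti-homomorphism from the sublattice $L_M$ of nil-varieties onto the congruence lattice $\Con(M)$ of $M$ viewed as an $S(X)$-set (Proposition~\ref{nil-nec}; the surjectivity is proved by an induction along deductions). Combined with Lemma~\ref{surjective homomorphism} and self-duality of modularity, this transports modularity of $\mathbf N$ to modularity of $\sim_{\mathbf N}|_M$ in $\Con(M)$, and then conditions (b) and (c) drop out of a purely group/congruence-theoretic analysis (Proposition~\ref{Stab in G-set is a modular}, via codes of congruences). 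Your proposed anti-embedding $X\mapsto\mathbf N_X$ for condition (b) is a shadow of this, but the claims you flag as ``delicate'' --- that $\Stab_{\mathbf N_X}(\mathbf u)=X$ with no collapse, and that the map turns subgroup meets into variety joins --- are exactly the content of that surjectivity argument, and without it your pentagon is not certified to be a sublattice of $\mathbb{SEM}$.

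There is also a conceptual misstep in your treatment of (c). You describe $T_{12},T_{23},T_{13},A_3$ and $I_{12,34},I_{13,24},I_{14,23},A_4$ as ``the positions at which $\Sub(S_3)$ and $\Sub(S_4)$ harbour non-modular configurations,'' but every subgroup of $S_3$ is a modular element of $\Sub(S_3)$, and $A_4$ and the $I_{ij,st}$ are modular elements of $\Sub(S_4)$; that is why these cases are not already excluded by condition (b). The obstruction in (c) is not non-modularity inside one subgroup lattice but the presence of an $M_3$ sublattice (e.g.\ $\{T,T_{12},T_{13},T_{23},S_3\}$ or $\{V_4,I_{12,34},I_{13,24},I_{14,23},S_4\}$) whose atoms sit as stabilizers on \emph{two different orbits} of the check set $\{\sigma(\mathbf u),\sigma(\mathbf v)\}$; the pentagon only materializes in $\Con(M)$, built from two congruences that connect the two orbits with the trivial stabilizer and with the third atom $H_3$ of the diamond respectively (Lemma~\ref{M3}). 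The incomparability $\mathbf u\parallel\mathbf v$ is used not to ``hide the difference'' between $\mathbf Y$ and $\mathbf Z$ but to guarantee that this two-orbit set is a check set at all, so that the anti-homomorphism onto $\Con(M)$ is available. Your plan of imposing symmetries on $\mathbf u$ and $\mathbf v$ independently does not by itself produce the required chain $\mathbf Y\subsetneq\mathbf Z$ with equal meets and joins against $\mathbf N$; you would need congruences identifying $\mathbf u$ with renamings of $\mathbf v$, i.e.\ identities of the form $\mathbf u\approx\sigma(\mathbf v)$, which your sketch never introduces.
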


\begin{theorem}
\label{sufficiency}
Let $\mathbf{M} \in \{\mathbf{T}, \mathbf{SL}\}$ and let $\mathbf{N}$ be a nil-variety of semigroups \textup[respectively, epigroups\textup] satisfying conditions \textup{(a)}, \textup{(b)}, and \textup{(c')} above. Then
\(
\mathbf{V} = \mathbf{M} \vee \mathbf{N}
\)
is a modular element of the lattice $\mathbb{SEM}$ \textup[respectively, $\mathbb{EPI}$\textup].
\end{theorem}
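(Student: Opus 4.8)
The plan is to prove the contrapositive through Volkov's criterion. By \cite[Proposition 1.1]{Volkov-05} it suffices to show that $\mathbf V=\mathbf M\vee\mathbf N$ cannot occur as the central element of a copy of $N_5$. So suppose the contrary: there are varieties $\mathbf X\subsetneq\mathbf Y$, both incomparable with $\mathbf V$, such that
$$\mathbf V\vee\mathbf X=\mathbf V\vee\mathbf Y\qquad\text{and}\qquad\mathbf V\wedge\mathbf X=\mathbf V\wedge\mathbf Y .$$
Reading these through identity bases (a join satisfies the common identities, a meet satisfies the identities deducible from the union), the first equality gives $\I(\mathbf V)\cap\I(\mathbf X)\subseteq\I(\mathbf Y)$, where $\I(\cdot)$ denotes the identity base, while the second gives $\I(\mathbf X)\subseteq\langle\I(\mathbf V)\cup\I(\mathbf Y)\rangle$. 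Choosing a witnessing identity $\varepsilon\in\I(\mathbf X)\setminus\I(\mathbf Y)$, these two inclusions force $\varepsilon\notin\I(\mathbf V)$ and yet $\varepsilon$ deducible from $\I(\mathbf V)\cup\I(\mathbf Y)$ (if $\varepsilon\in\I(\mathbf V)$ then $\varepsilon\in\I(\mathbf V)\cap\I(\mathbf X)\subseteq\I(\mathbf Y)$, a contradiction). The whole argument is devoted to showing that such an $\varepsilon$ cannot exist once $\mathbf V$ obeys (a), (b), (c').

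Before analysing $\varepsilon$ I would strip off the summand $\mathbf M$. Since $\mathbf T$ and $\mathbf{SL}$ sit in tightly controlled positions of $\mathbb{SEM}$ and $\mathbb{EPI}$, and $\mathbf N$ is a nil-variety, one shows, using the reduction to nil-varieties underlying Proposition~\ref{mod nec}, that it suffices to derive the contradiction within the nil-identities of $\mathbf N$; concretely, the pentagon centred at $\mathbf M\vee\mathbf N$ can be pushed onto an interval on which the semilattice/trivial part is constant. After this reduction I may treat $\mathbf V$ as a nil-variety and read everything above purely among nil-identities, where condition (a) applies: every non-trivial identity of $\mathbf V$ is either zero-reduced, $\mathbf w\approx0$, or substitutive, $\mathbf u\approx\sigma(\mathbf u)$, the latter being recorded by the stabilizers $\Stab_{\mathbf V}(\mathbf u)$.

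Next I would dissect a shortest deduction of $\varepsilon$ from $\I(\mathbf V)\cup\I(\mathbf Y)$ and track which $\mathbf V$-steps it uses. By (a) each such step is a renaming governed by some stabilizer or a passage to $0$; since $\varepsilon$ holds in $\mathbf X\subseteq\mathbf Y$, every $\mathbf Y$-step is already available in $\mathbf X$, so the obstruction is concentrated in the $\mathbf V$-steps that fail in $\mathbf X$. Localising at the word(s) on which those steps act splits the problem into two cases. If a single word $\mathbf u$ carries the obstruction, the $\mathbf V$-, $\mathbf X$- and $\mathbf Y$-stabilizers of $\mathbf u$ assemble into a pentagon inside $\Sub(S(\alphabet(\mathbf u)))$ with $\Stab_{\mathbf V}(\mathbf u)$ central, contradicting (b). If two non-equivalent words $\mathbf u,\mathbf v$ with $\alphabet(\mathbf u)=\alphabet(\mathbf v)$ are needed---one supplying a $\mathbf V$-renaming, the other a relation through which $\mathbf Y$ transports it---then $\Stab_{\mathbf V}(\mathbf u)$ and $\Stab_{\mathbf V}(\mathbf v)$ must realise one of the forbidden tuples of subgroups of $S_3$ or $S_4$, which is exactly what (c') rules out.

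The main obstacle is precisely this two-word case, and it is the reason (c') rather than (c) appears in the sufficiency direction. In the necessity direction one can only manufacture genuinely incomparable witnesses $\mathbf u\parallel\mathbf v$, whereas here the mixing of a $\mathbf V$-renaming with a $\mathbf Y$-relation can also be fed by strictly comparable words $\mathbf u<\mathbf v$; excluding these requires the stronger hypothesis on all non-equivalent pairs. Carrying this out demands a careful normal-form analysis of the deductions---showing that the obstruction can always be localised to one or two words, and that comparable witnesses produce the same forbidden subgroup tuples as incomparable ones---while the zero-reduced steps are absorbed using (a) together with the closure of the set of $\mathbf V$-zero-words under the combined theory. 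The epigroup case runs in parallel: pseudoinversion enlarges the word poset but the ambient symmetric groups still act on the same alphabets, so the stabilizer-and-zero-word bookkeeping, and hence the entire case analysis, transfers with only routine changes.
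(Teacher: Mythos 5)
Your overall strategy --- the contrapositive via Volkov's $N_5$ criterion, stripping off $\mathbf M$, then analysing a deduction of a witnessing identity --- points in the same general direction as the paper (which instead verifies the modular law directly), but as written it is a roadmap rather than a proof, and its two central claims sit exactly where the real work lives. First, the assertion that the obstruction ``can always be localised to one or two words'' is not established, and it is not a routine normal-form exercise: the paper devotes two sections to the machinery that replaces it. One must pass from the lattice of nil-varieties to the congruence lattice $\Con(M)$ of a \emph{weaker} check set $M$ (the $S(\alphabet(\mathbf u))$-closure of all words occurring in the deduction), show that restriction $\mathbf X\mapsto\ \sim_{\mathbf X}|_M$ is a surjective anti-homomorphism (Propositions~\ref{nil-nec} and~\ref{nil-suf}), and then decompose $\Con(M)$ via transversals and codes $(\pi\mid H_1,\dots,H_n)$ into a partition of orbits plus a tuple of stabilizer subgroups (Lemmas~\ref{coding congruence}--\ref{meet and join of codes}). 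Your claim that in the single-word case the $\mathbf V$-, $\mathbf X$- and $\mathbf Y$-stabilizers ``assemble into a pentagon inside $\Sub(S(\alphabet(\mathbf u)))$'' presupposes precisely this reduction; without it there is no map carrying a pentagon of varieties to a pentagon of subgroups. (Note also that the paper reserves $\I(\mathbf V)$ for the set of words $\mathbf u$ with $\mathbf u\approx 0$ in $\mathbf V$, not for the equational theory.)

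Second, the dichotomy ``one word $\Rightarrow$ contradiction with (b), two words $\Rightarrow$ contradiction with (c')'' does not survive in the sufficiency direction. In the key computation (the sufficiency half of Proposition~\ref{Stab in G-set is a modular}) the equality $\Stab_{(\alpha\wedge\gamma)\vee\beta}(x_i)=\Stab_{(\alpha\vee\beta)\wedge\gamma}(x_i)$ is obtained by replacing a join $\bigvee_j\Stab_\alpha(x_j)$ over a $\beta$-class of orbits by a maximum, which is legitimate only because (b) and (c') \emph{jointly} force all the stabilizers involved to be pairwise comparable subgroups of $S_n$; only then can modularity of the individual stabilizers be used to push the join past the meet. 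So the two hypotheses enter a single computation together, not in disjoint cases. Finally, deduction steps passing through words of $\I(\mathbf V)$ are not simply ``absorbed using (a)'': this is Case~2 of Proposition~\ref{nil-suf}, which requires a separate induction producing words $\mathbf s,\mathbf t$ with $\mathbf u\approx\mathbf s$ and $\mathbf t\approx\mathbf v$ in $\mathbf W$ and $\mathbf s\approx 0\approx\mathbf t$ in $\mathbf V$. None of these gaps is obviously unfixable, but filling them amounts to reconstructing the paper's check-set, $G$-set and code apparatus.
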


This paper is the first of two parts in our investigation.
Here, Theorems~\ref{necessity} and~\ref{sufficiency} provide a necessary and a sufficient conditions for a variety to be modular.
Although stated separately, these conditions are in fact equivalent --- a fact that will be established in the second part of this work. In particular, this will imply that modular varieties of semigroups and epigroups are the same (except the variety of all semigroups). The complete characterization of modular elements will also be given in the second part, where the reader will find a full description of modular elements in the lattices under consideration.

Recently, the description of the modular elements of the lattice of varieties of monoids was obtained by Sergey Gusev~\cite{Gusev-26}.  
There is a significant difference between the semigroup and the monoid cases.  
Namely, in~\cite{Gusev-26} it is proved that, in the lattice of varieties of monoids, the modular elements form a sublattice.  
However, this property does not hold for the lattice of semigroup (or epigroup) varieties.  Let
\(
\mathbf{V}_1\) be a varity generated by identities \( \{x^2yz \approx x^2zy,\; x_1x_2x_3x_4x_5 \approx 0\}\), 
 and  
\(\mathbf{V}_2\) be a varity generated by identities \( \{xyz^2 \approx yxz^2,\; x_1x_2x_3x_4x_5 \approx 0\}.
\)
Using Theorems~\ref{necessity} and~\ref{sufficiency}, one can verify that the varieties $\mathbf V_1$ and $\mathbf V_2$ are modular while the variety $\mathbf{V}_1\wedge\mathbf{V}_2$ is not.

\section{Preliminaries}
\label{Preliminaries}

\subsection{Modular elements in an arbitrary lattice}
\label{modular an arbitrary lattice}

We begin with a general observation on how modularity is preserved under surjective lattice homomorphisms.

\medskip

\begin{lemma}
\label{surjective homomorphism}
Let $L_1$ and $L_2$ be arbitrary lattices, and let $\chi \colon L_1 \to L_2$ be a surjective lattice homomorphism. If $a \in L_1$ is a modular element in $L_1$, then $\chi(a)$ is a modular element in $L_2$. 
\end{lemma}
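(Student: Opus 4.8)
The plan is to verify the defining modularity identity for $\chi(a)$ directly. First I would fix arbitrary elements $y', z' \in L_2$ with $y' \le z'$, with the goal of proving $(\chi(a) \vee y') \wedge z' = (\chi(a) \wedge z') \vee y'$. Since $\chi$ is surjective, I can pick preimages $y, z \in L_1$ with $\chi(y) = y'$ and $\chi(z) = z'$.

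The single genuine obstacle is that these arbitrarily chosen preimages need not satisfy $y \le z$, so I cannot invoke the modularity of $a$ in $L_1$ for the pair $(y,z)$ directly. The repair is to replace $y$ by $y \wedge z$. On the one hand, $y \wedge z \le z$ holds automatically. On the other hand, since $\chi$ preserves meets and since $y' \le z'$, I obtain $\chi(y \wedge z) = \chi(y) \wedge \chi(z) = y' \wedge z' = y'$. Thus I have produced a genuinely comparable pair $y \wedge z \le z$ in $L_1$ whose images under $\chi$ are still exactly $y'$ and $z'$.

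With this in hand, I would apply the modularity of $a$ in $L_1$ to the comparable pair $y \wedge z \le z$, which gives $\bigl(a \vee (y \wedge z)\bigr) \wedge z = (a \wedge z) \vee (y \wedge z)$. Applying $\chi$ to both sides and using that $\chi$ preserves both $\wedge$ and $\vee$, the left-hand side transforms into $(\chi(a) \vee y') \wedge z'$ and the right-hand side into $(\chi(a) \wedge z') \vee y'$. This is precisely the identity required for $\chi(a)$, and since $y', z'$ were arbitrary subject to $y' \le z'$, the proof is complete.

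Everything except the substitution $y \mapsto y \wedge z$ is a mechanical application of the homomorphism property together with the definition of modularity; that substitution is the only step that needs genuine thought, as it is what lets surjectivity compensate for the possible incomparability of the chosen preimages.
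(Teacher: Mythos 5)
Your proof is correct and follows essentially the same strategy as the paper's: both repair the arbitrarily chosen preimages into a comparable pair before invoking modularity in $L_1$, the only difference being that you replace the smaller preimage by $y \wedge z$ while the paper dually replaces the larger one by $u \vee v$. Either adjustment maps back onto the original pair in $L_2$, so the two arguments are interchangeable.
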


\begin{proof}
Let $x, y \in L_2$ be such that $x \le y$. We aim to prove that
\[
(\chi(a) \vee x) \wedge y = (\chi(a) \wedge y) \vee x.
\]
Since $\chi$ is surjective, there exist elements $u, v \in L_1$ such that $\chi(u) = x$ and $\chi(v) = y$. Clearly,
\[
\chi(u \vee v) = \chi(u) \vee \chi(v) = x \vee y = y.
\]
Using the fact that $a$ is modular in $L_1$ and that $u \le u \vee v$, we obtain
\begin{align*}
(\chi(a) \vee x) \wedge y 
&= (\chi(a) \vee \chi(u)) \wedge \chi(u \vee v) && \text{(by choice of $u, v$)} \\
&= \chi((a \vee u) \wedge (u \vee v)) && \text{($\chi$ is a homomorphism)} \\
&= \chi((a \wedge (u \vee v)) \vee u) && \text{($a$ is modular)} \\
&= (\chi(a) \wedge \chi(u \vee v)) \vee \chi(u) && \text{($\chi$ is a homomorphism)} \\
&= (\chi(a) \wedge y) \vee x.
\end{align*}
This completes the proof.
\end{proof}

\subsection{Modular elements in subgroup lattices of finite symmetric groups}
\label{modular in subgroup}

Let $T$ be the trivial group, $T_{ij}$ be the group generated by a transposition $(ij)$, $C_{ijk}$ and $C_{ijk\ell}$ be the groups generated by cycles $(ijk)$ and $(ijk\ell)$ respectively, $P_{ij,k\ell}$ be the group generated by the product of disjoint transpositions $(ij)$ and $(k\ell)$, $A_n$ be the alternating subgroup of $S_n$ and $V_4$ be the Klein four-group. The subgroup lattice of a group $G$ is denoted by $\Sub(G)$. 
Let us recall the structure of the lattices $\Sub(S_3)$ and $\Sub(S_4)$. It is generally known and easy to check that the first of these two lattices has the form shown in Fig.~\ref{Sub(S_3)}. Direct routine calculations allow to verify that the lattice $\Sub(S_4)$ is as shown in Fig.~\ref{Sub(S_4)}. 

\begin{figure}[tbh]
\centering
\begin{tikzpicture}[scale=1, every node/.style={circle, fill=black, inner sep=1.5pt}]
  \node (T12) at (0,0) {};
  \node (T13) at (2,0) {};
  \node (T)   at (3,-2) {};
  \node (S3)  at (3,2) {};
  \node (T23) at (4,0) {};
  \node (C123) at (6,0) {};

  \node[draw=none, fill=none, left=2pt of T12]  {$T_{12}$};
  \node[draw=none, fill=none, left=2pt of T13]  {$T_{13}$};
  \node[draw=none, fill=none, right=2pt of T23] {$T_{23}$};
  \node[draw=none, fill=none, right=2pt of C123]{$C_{123}$};
  \node[draw=none, fill=none, above=2pt of S3]  {$S_3$};
  \node[draw=none, fill=none, below=2pt of T]   {$T$};

  \draw (T) -- (T12) -- (S3) -- (T13) -- (T);
  \draw (T) -- (T23) -- (S3) -- (C123);
  \draw (T) -- (C123);
\end{tikzpicture}
\caption{The lattice $\Sub(S_3)$}
\label{Sub(S_3)}
\end{figure}
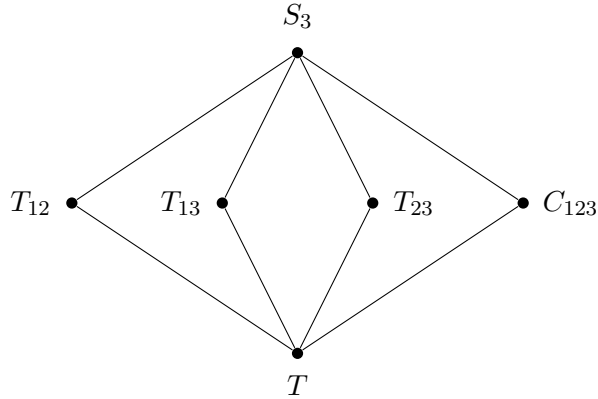

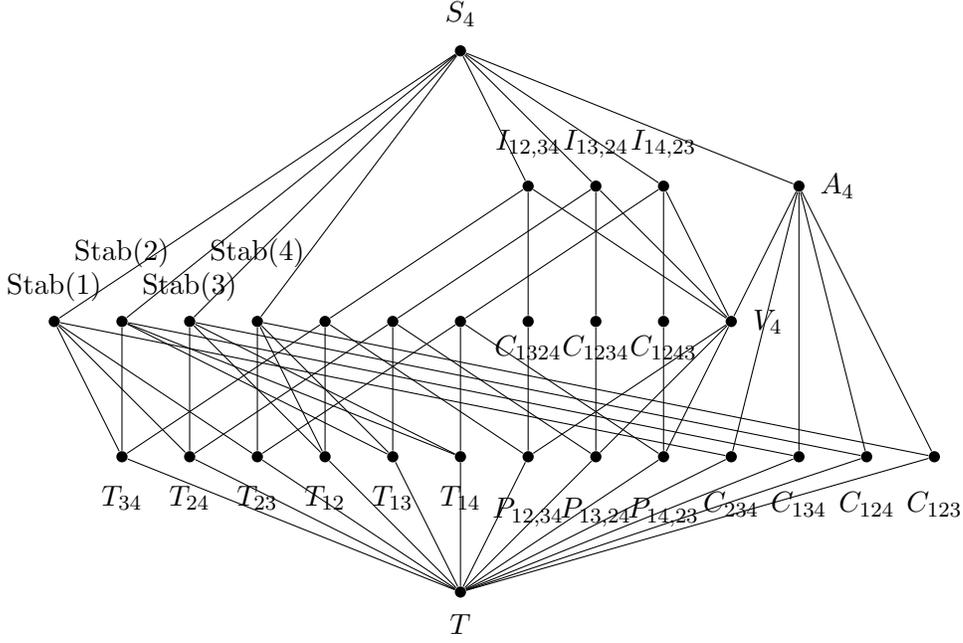
\begin{figure}[tbh]
\centering
\begin{tikzpicture}[scale=0.9, every node/.style={circle, fill=black, inner sep=1.5pt}]
 \node (T) at (6,0) {};
 \node (T34) at (1,2) {};
 \node (T24) at (2,2) {};
 \node (T23) at (3,2) {};
 \node (T12) at (4,2) {};
 \node (T13) at (5,2) {};
 \node (T14) at (6,2) {};
 \node (P1234) at (7,2) {};
 \node (P1324) at (8,2) {};
 \node (P1423) at (9,2) {};
 \node (C234) at (10,2) {};
 \node (C134) at (11,2) {};
 \node (C124) at (12,2) {};
 \node (C123) at (13,2) {};

 \node (Stab1) at (0,4) {}; 
\node (Stab2) at (1,4) {};
 \node (Stab3) at (2,4) {};
 \node (Stab4) at (3,4) {};
 \node (Point1234) at (4,4) {};
 \node (Point1324) at (5,4) {};
 \node (Point1423) at (6,4) {};
 \node (C1234) at (7,4) {};
 \node (C1324) at (8,4) {};
 \node (C1423) at (9,4) {};
 \node (V4) at (10,4) {};

 \node (I1234) at (7,6) {};
 \node (I1324) at (8,6) {};
 \node (I1423) at (9,6) {};

 \node (A4) at (11,6) {};

 \node (S4) at (6,8) {};

 \node[draw=none, fill=none, below=-8pt of C1234]  {$C_{1324}$};
 \node[draw=none, fill=none, below=-8pt of C1324]  {$C_{1234}$};
 \node[draw=none, fill=none, below=-8pt of C1423]  {$C_{1243}$};\
 \node[draw=none, fill=none, right=2pt of V4]  {$V_{4}$};
 \node[draw=none, fill=none, below=2pt of C134]  {$C_{134}$};
 \node[draw=none, fill=none, below=2pt of C124]  {$C_{124}$};
 \node[draw=none, fill=none, below=2pt of C123]  {$C_{123}$};
 \node[draw=none, fill=none, below=2pt of C234]  {$C_{234}$};
 \node[draw=none, fill=none, below=2pt of P1423]  {$P_{14,23}$};
 \node[draw=none, fill=none, below=2pt of P1324]  {$P_{13,24}$};
 \node[draw=none, fill=none, below=2pt of P1234]  {$P_{12,34}$};
 \node[draw=none, fill=none, below=2pt of T12]  {$T_{12}$};
  \node[draw=none, fill=none, below=2pt of T13]  {$T_{13}$};
  \node[draw=none, fill=none, below=2pt of T23] {$T_{23}$};
  \node[draw=none, fill=none, below=2pt of T34]{$T_{34}$};
  \node[draw=none, fill=none, below=2pt of T24]  {$T_{24}$};
  \node[draw=none, fill=none, below=2pt of T14]  {$T_{14}$};
  \node[draw=none, fill=none, below=2pt of T]   {$T$};

 \node[draw=none, fill=none, above=-2pt of I1234]   {$I_{12,34}$};
 \node[draw=none, fill=none, above=-2pt of I1324]   {$I_{13,24}$};
 \node[draw=none, fill=none, above=-2pt of I1423]   {$I_{14,23}$};
 \node[draw=none, fill=none, right=2pt of A4]   {$A_4$};

\node[draw=none, fill=none, above=-10pt of Stab1]   {$\Stab (1)$};
 \node[draw=none, fill=none, above=3pt of Stab2]   {$\Stab (2)$};
 \node[draw=none, fill=none, above=-10pt of Stab3]   {$\Stab (3)$};
 \node[draw=none, fill=none, above=3pt of Stab4]   {$\Stab (4)$};

 \node[draw=none, fill=none, above=2pt of S4]   {$S_4$};

  \draw (T) -- (T34) -- (Stab1) -- (S4);
 \draw (T34) -- (Stab2);
  \draw (T) -- (T24) -- (Stab1);
  \draw (T24) -- (Stab3);
  \draw (T) -- (T23) -- (Stab1);
  \draw (T23) -- (Stab4);
 \draw (T) -- (T12) -- (Stab3)  -- (S4);
 \draw (T12) -- (Stab4) -- (S4);
  \draw (T) -- (T13) -- (Stab2) -- (S4);
  \draw (T13) -- (Stab4);
  \draw (T) -- (T14) -- (Stab2);
  \draw (T14) -- (Stab3);
 \draw (T) -- (P1234) -- (V4) -- (A4) -- (S4);
 \draw (T) -- (P1324)-- (V4);
 \draw (T) -- (P1423)-- (V4);
 \draw  (V4) -- (I1324);
 \draw  (V4) -- (I1234);
 \draw  (V4) -- (I1423);
 \draw  (P1234) -- (C1234) -- (I1234) -- (S4);
 \draw  (P1324) -- (C1324) -- (I1324) -- (S4);
 \draw  (P1423) -- (C1423) -- (I1423) -- (S4);
 \draw (T) -- (C234) -- (Stab1);
 \draw (C234) -- (A4);
 \draw (T) -- (C134) -- (Stab2);
 \draw (C134) -- (A4);
 \draw (T) -- (C124) -- (Stab3);
 \draw (C124) -- (A4);
 \draw (T) -- (C123) -- (Stab4);
 \draw (C123) -- (A4);
\draw (T34) -- (Point1234) -- (I1234);
\draw (T12) -- (Point1234);
\draw (P1234) -- (Point1234);
\draw (T13) -- (Point1324) -- (I1324);
\draw (T24) -- (Point1324);
\draw (P1324) -- (Point1324);
\draw (T14) -- (Point1423) -- (I1423);
\draw (T23) -- (Point1423);
\draw (P1423) -- (Point1423);
\end{tikzpicture}
\caption{The lattice $\Sub(S_4)$}
\label{Sub(S_4)}
\end{figure}

Modular elements of the lattice $\Sub(S_n)$ for an arbitrary $n$ are completely determined in~\cite[Propositions~3.1,~3.7 and~3.8]{Jezek-81}. If $n\le 2$ then $S_n$ does not contain any subgroups different from $T$ and $S_n$. If $n=3$ then all subgroups are modular which follows from Fig.~\ref{Sub(S_3)}. If $n=4$ then a non-singleton proper subgroup $G$ of $S_n$ is modular if and only if $G\supseteq V_4$ (see Fig.~\ref{Sub(S_4)}). If $n\ge 5$ then a non-trivial subgroup $G$ of $S_n$ is modular if and only if  $G\supseteq A_n$ (that is, $G = S_n$ or $G = A_n$).

\subsection{Joining with the variety of all semilattices} 
\label{Joining with the variety of all semilattices}

Recall that an element $x$ of a lattice $\langle L, \vee, \wedge \rangle$ is called \emph{neutral} if
\[
(\forall y, z \in L)\quad (x \vee y) \wedge (y \vee z) \wedge (z \vee x) = (x \wedge y) \vee (y \wedge z) \vee (z \wedge x).
\]

We begin with a general result concerning joins with neutral elements in lattices.

\medskip

\begin{lemma}[\!\!{\mdseries\cite[Lemma~1.6]{Volkov-05}}]
\label{modular atom}
Let $a$ be a neutral element of a lattice $L$. If $x \in L$ is a modular element of $L$, then so is $x \vee a$. \qed
\end{lemma}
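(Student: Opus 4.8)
The plan is to verify the modular identity for $x \vee a$ by direct computation, using the classical fact that every neutral element of a lattice is \emph{standard}, i.e.\ satisfies $u \wedge (a \vee v) = (u \wedge a) \vee (u \wedge v)$ for all $u, v \in L$ (this is one of the standard characterizations of neutrality, which asserts in particular that a neutral element is simultaneously standard and dually standard; see Grätzer's lattice theory monograph). Granting this, the whole argument reduces to a short computation in which standardness is used to ``pull $a$ out'' of both sides of the desired equality, after which the modularity of $x$ closes the gap.

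Concretely, I would fix $y, z \in L$ with $y \le z$ and aim to show that $((x \vee a) \vee y) \wedge z = ((x \vee a) \wedge z) \vee y$. For the left-hand side I would rewrite $(x \vee a) \vee y = a \vee (x \vee y)$ and apply standardness with $u = z$ and $v = x \vee y$, obtaining
\[
((x \vee a) \vee y) \wedge z = (z \wedge a) \vee \bigl(z \wedge (x \vee y)\bigr).
\]
Since $x$ is modular and $y \le z$, the second summand simplifies as $z \wedge (x \vee y) = (x \vee y) \wedge z = (x \wedge z) \vee y$, so the left-hand side equals $(z \wedge a) \vee (z \wedge x) \vee y$.

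For the right-hand side I would apply standardness once more, now with $u = z$ and $v = x$, to get $(x \vee a) \wedge z = (z \wedge a) \vee (z \wedge x)$, whence $((x \vee a) \wedge z) \vee y = (z \wedge a) \vee (z \wedge x) \vee y$. The two sides now coincide, which is precisely the modularity of $x \vee a$.

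The only genuine content, and hence the main obstacle, is the input fact that a neutral element is standard; everything after that is mechanical. If a self-contained treatment were preferred, this fact could be extracted from the defining neutral identity together with the distributivity and dual distributivity of neutral elements, but invoking the classical characterization is far cleaner. A secondary point to keep in mind is that standardness is an \emph{asymmetric} (meet-over-join) distributive law, so one must respect the order of meet and join when applying it; the symmetric neutral identity on its own is not convenient for this computation.
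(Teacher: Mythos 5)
Your computation is correct: granting that every neutral element is standard (i.e.\ $u \wedge (a \vee v) = (u \wedge a) \vee (u \wedge v)$ for all $u,v$), both sides of the modular identity for $x \vee a$ reduce to $(z \wedge a) \vee (z \wedge x) \vee y$, with modularity of $x$ used exactly once on the left-hand side. The input fact is indeed classical (Gr\"atzer--Schmidt: an element is neutral if and only if the sublattice generated by it together with any two elements is distributive, which immediately yields standardness), so there is no gap. Note, however, that the paper itself offers no proof to compare against --- the lemma is quoted verbatim from Volkov's paper \cite[Lemma~1.6]{Volkov-05} and marked with \verb|\qed| --- so your argument serves as a self-contained substitute for the citation rather than an alternative to an argument in the text. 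If you wanted to avoid invoking the neutral-implies-standard theorem as a black box, you could instead derive the two instances of standardness you actually use directly from the neutral identity applied to the triples $(a, z, x \vee y)$ and $(a, z, x)$, but the version as written is perfectly acceptable.
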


\medskip

Let \( \mathbf{SL} \) denote the variety of all semilattices, that is, the variety defined by the identities
\[
x^2 \approx x, \quad xy \approx yx.
\]
It is well known that \( \mathbf{SL} \) is a neutral element of both \( \mathbb{SEM} \)~\cite[Proposition~4.1]{Volkov-05} and \( \mathbb{EPI} \)~\cite[Theorem~1.1]{Shaprynskii-Skokov-Vernikov-16}. Combining this fact with Lemma~\ref{modular atom}, we immediately obtain the following result.

\medskip

\begin{lemma}
\label{join with SL}
A variety of semigroups \textup[respectively, epigroups\textup] $\mathbf{V}$ is a modular element of the lattice $\mathbb{SEM}$ \textup[respectively, $\mathbb{EPI}$\textup] if and only if $\mathbf{V} \vee \mathbf{SL}$ is a modular element in $\mathbb{SEM}$ \textup[respectively, $\mathbb{EPI}$\textup]. \qed
\end{lemma}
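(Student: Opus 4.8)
The forward implication is immediate. Since $\mathbf{SL}$ is a neutral element of $\mathbb{SEM}$ [respectively, of $\mathbb{EPI}$], Lemma~\ref{modular atom} applied with $a=\mathbf{SL}$ shows that if $\mathbf{V}$ is modular then so is $\mathbf{V}\vee\mathbf{SL}$. The substance of the lemma therefore lies entirely in the converse, which I would establish by contraposition using Volkov's characterisation recalled in the introduction \cite{Volkov-05}: an element fails to be modular exactly when it is the central element of a copy of $N_5$. The same argument will cover $\mathbb{SEM}$ and $\mathbb{EPI}$ simultaneously, since it uses only that $\mathbf{SL}$ is neutral and that $\mathbf{SL}$ is an atom.

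So suppose $\mathbf{V}$ is not modular. Then there are varieties $\mathbf{Q}<\mathbf{P}$ forming, together with $\mathbf{V}$, a pentagon centred at $\mathbf{V}$; explicitly $\mathbf{V}\vee\mathbf{Q}=\mathbf{V}\vee\mathbf{P}=:\mathbf{A}$ and $\mathbf{V}\wedge\mathbf{Q}=\mathbf{V}\wedge\mathbf{P}=:\mathbf{D}$. The plan is to apply the order-preserving map $(-)\vee\mathbf{SL}$ and show that the modular law fails for $\mathbf{W}:=\mathbf{V}\vee\mathbf{SL}$ at the pair $\mathbf{Q}':=\mathbf{Q}\vee\mathbf{SL}\le\mathbf{P}':=\mathbf{P}\vee\mathbf{SL}$. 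Since $\mathbf{SL}$ is neutral, the sublattice generated by $\mathbf{SL}$ together with any two elements is distributive, so a short computation gives $\mathbf{W}\vee\mathbf{Q}'=\mathbf{A}\vee\mathbf{SL}\ge\mathbf{P}'$ and $\mathbf{W}\wedge\mathbf{P}'=(\mathbf{V}\vee\mathbf{SL})\wedge(\mathbf{P}\vee\mathbf{SL})=\mathbf{D}\vee\mathbf{SL}$. Consequently $(\mathbf{W}\vee\mathbf{Q}')\wedge\mathbf{P}'=\mathbf{P}'$, whereas $(\mathbf{W}\wedge\mathbf{P}')\vee\mathbf{Q}'=(\mathbf{D}\vee\mathbf{SL})\vee\mathbf{Q}'=\mathbf{Q}'$ (using $\mathbf{D}\le\mathbf{Q}$). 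Hence, as soon as $\mathbf{P}'\ne\mathbf{Q}'$, the modular law fails for $\mathbf{W}$ and the proof is complete.

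The one real obstacle is the degenerate possibility $\mathbf{P}'=\mathbf{Q}'$, i.e.\ $\mathbf{P}\vee\mathbf{SL}=\mathbf{Q}\vee\mathbf{SL}$, in which the construction collapses; the heart of the argument is to show that this cannot occur. Here I would use that $\mathbf{SL}$ is an atom, so the interval $(\mathbf{SL}]$ is the two-element chain $\{\mathbf{T},\mathbf{SL}\}$ and each of $\mathbf{V}\wedge\mathbf{SL}$, $\mathbf{Q}\wedge\mathbf{SL}$, $\mathbf{P}\wedge\mathbf{SL}$ equals $\mathbf{T}$ or $\mathbf{SL}$. Neutrality yields the distributive cancellation rule (equal meet and equal join with $\mathbf{SL}$ force equality); as $\mathbf{Q}\ne\mathbf{P}$, the assumption $\mathbf{P}'=\mathbf{Q}'$ would therefore force $\mathbf{Q}\wedge\mathbf{SL}=\mathbf{T}$ and $\mathbf{P}\wedge\mathbf{SL}=\mathbf{SL}$, that is, $\mathbf{SL}\le\mathbf{P}$ (so $\mathbf{SL}\le\mathbf{A}$) but $\mathbf{SL}\not\le\mathbf{Q}$. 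Moreover $\mathbf{SL}\not\le\mathbf{V}$, for otherwise $\mathbf{SL}\le\mathbf{V}\wedge\mathbf{P}=\mathbf{D}\le\mathbf{Q}$. Now the join-distributivity of the neutral element $\mathbf{SL}$, applied to $\mathbf{V}$ and $\mathbf{Q}$, gives the contradiction
\[
\mathbf{SL}=\mathbf{A}\wedge\mathbf{SL}=(\mathbf{V}\vee\mathbf{Q})\wedge\mathbf{SL}=(\mathbf{V}\wedge\mathbf{SL})\vee(\mathbf{Q}\wedge\mathbf{SL})=\mathbf{T}.
\]
Thus $\mathbf{P}'\ne\mathbf{Q}'$ always holds, finishing the contrapositive.

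A more conceptual route to the converse, which I would keep in reserve, is to observe that neutrality makes $\mathbf{X}\mapsto(\mathbf{X}\wedge\mathbf{SL},\mathbf{X}\vee\mathbf{SL})$ a lattice embedding of $\mathbb{SEM}$ into $(\mathbf{SL}]\times[\mathbf{SL})$; an embedding carries the pentagon centred at $\mathbf{V}$ to one centred at its image, and since the first factor $(\mathbf{SL}]$ is a chain (all of whose elements are modular), the componentwise behaviour of modularity in a direct product forces $\mathbf{V}\vee\mathbf{SL}$ to be non-modular in the filter $[\mathbf{SL})$, hence in $\mathbb{SEM}$. I expect the only delicate point of either approach to be precisely the verification that the degenerate case $\mathbf{P}'=\mathbf{Q}'$ is vacuous; everything else is routine pentagon bookkeeping.
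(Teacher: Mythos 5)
Your proposal is correct, and it is worth separating its two halves. The forward implication is exactly the paper's argument: apply Lemma~\ref{modular atom} to the neutral element $\mathbf{SL}$. The converse is where you add real content. The paper states the lemma as an equivalence and marks it as following ``immediately'' from neutrality together with Lemma~\ref{modular atom}, but that lemma only gives one direction, and the converse is genuinely false for an arbitrary neutral element: the top of $N_5$ is neutral, yet joining the central (non-modular) element with it yields the modular top element. So some use of the fact that $\mathbf{SL}$ is an atom is unavoidable, and your proof supplies it correctly: transporting a pentagon centred at $\mathbf{V}$ by $(-)\vee\mathbf{SL}$, using that the sublattice generated by $\mathbf{SL}$ and any two elements is distributive (the standard equivalent form of neutrality) to compute $(\mathbf{V}\vee\mathbf{SL})\wedge(\mathbf{P}\vee\mathbf{SL})=(\mathbf{V}\wedge\mathbf{P})\vee\mathbf{SL}$, and then excluding the only degenerate case $\mathbf{P}\vee\mathbf{SL}=\mathbf{Q}\vee\mathbf{SL}$ via the cancellation property of neutral elements combined with atomicity of $\mathbf{SL}$. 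All the computations check out. Your ``reserve'' route via the embedding $\mathbf{X}\mapsto(\mathbf{X}\wedge\mathbf{SL},\mathbf{X}\vee\mathbf{SL})$ into $(\mathbf{SL}]\times[\mathbf{SL})$ is also sound and is probably the intended one-line justification behind the paper's unproved assertion. In short: same approach as the paper for the easy half, and a correct, fully written-out proof of the half the paper leaves implicit.
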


\medskip

The following proposition provides a stronger version of the necessary condition given by Lemma~\ref{join with SL}. 

\medskip

\begin{proposition}
\label{mod nec}
If a proper variety of semigroups \textup[epigroups\textup] $\mathbf V$ is a modular element of the lattice $\mathbb{SEM}$ \textup[respectively $\mathbb{EPI}$\textup] then $\mathbf{V=M\vee N}$ where $\mathbf M$ is one of the varieties $\mathbf T$ and $\mathbf{SL}$, while $\mathbf N$ is a nil-variety.\qed
\end{proposition}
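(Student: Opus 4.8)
The plan is to reduce the statement to the exclusion of three ``non-nil'' atoms of the lattice and then to invoke the structure theory of periodic varieties. Recall that the atoms of $\mathbb{SEM}$ are the semilattice variety $\mathbf{SL}$, the variety $\mathbf{ZM}$ of semigroups with zero multiplication, the varieties $\mathbf{LZ}$ and $\mathbf{RZ}$ of left-zero and right-zero semigroups, and the varieties $\mathbf{A}_p$ of abelian groups of prime exponent $p$. Of these, $\mathbf{ZM}$ is already a nil-variety and $\mathbf{SL}$ is permitted by the conclusion, so I would first isolate the following equivalent reformulation: a proper variety $\mathbf V$ admits a decomposition $\mathbf V=\mathbf M\vee\mathbf N$ with $\mathbf M\in\{\mathbf T,\mathbf{SL}\}$ and $\mathbf N$ a nil-variety if and only if $\mathbf V$ contains none of $\mathbf{LZ}$, $\mathbf{RZ}$, and $\mathbf{A}_p$ (for every prime $p$). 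Note that the last condition already forces $\mathbf V$ to be periodic: a non-periodic variety contains a copy of the infinite monogenic semigroup $(\mathbb N,+)$, whose generated variety is the variety $\mathbf{COM}$ of all commutative semigroups and therefore contains every $\mathbf{A}_p$. Thus the whole proposition amounts to ruling out these three atoms inside a proper modular $\mathbf V$.

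The heart of the argument is the exclusion of the atoms, carried out through Volkov's criterion: since $\mathbf V$ is a modular element, it cannot occur as the central element of a sublattice isomorphic to $N_5$ (Fig.~\ref{Pent}). Concretely, I would show that if $\mathbf V$ contained $\mathbf{LZ}$, $\mathbf{RZ}$, or some $\mathbf{A}_p$, then one can produce varieties $\mathbf Y<\mathbf Z$, both incomparable to $\mathbf V$, with $\mathbf V\vee\mathbf Y=\mathbf V\vee\mathbf Z$ and $\mathbf V\wedge\mathbf Y=\mathbf V\wedge\mathbf Z$, i.e. with $(\mathbf V\vee\mathbf Y)\wedge\mathbf Z\neq(\mathbf V\wedge\mathbf Z)\vee\mathbf Y$, placing $\mathbf V$ at the forbidden central vertex. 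For the one-sided-zero atoms one works inside the well-understood combinatorial sublattice generated by $\mathbf{LZ},\mathbf{RZ},\mathbf{SL},\mathbf{ZM}$; by the left--right duality of $\mathbb{SEM}$ (an order-preserving lattice automorphism swapping $\mathbf{LZ}$ and $\mathbf{RZ}$) the $\mathbf{RZ}$ case follows from the $\mathbf{LZ}$ case. For the group atom one works inside the small interval $[\mathbf T,\mathbf{A}_p\vee\mathbf{ZM}]$, whose joins and meets can be computed explicitly. Wherever a surjective lattice homomorphism onto a small lattice is available, Lemma~\ref{surjective homomorphism} permits replacing the explicit pentagon by the remark that the image of $\mathbf V$ would itself be a non-modular central element downstairs.

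Once $\mathbf V$ is known to be periodic and to contain neither $\mathbf{LZ}$, $\mathbf{RZ}$, nor any nontrivial group, the structure theory of periodic semigroup varieties applies: every member of $\mathbf V$ is a nilpotent extension of a semilattice (or of the trivial semigroup), so that $\mathbf V$ possesses a largest nil-subvariety $\mathbf N$. Setting $\mathbf M=\mathbf{SL}$ when $\mathbf{SL}\le\mathbf V$ and $\mathbf M=\mathbf T$ otherwise, one then verifies $\mathbf V=\mathbf M\vee\mathbf N$. For the epigroup case I would argue in parallel: periodic epigroup varieties coincide with periodic semigroup varieties (as recalled in the introduction), the atoms and the structure theory of $\mathbb{EPI}$ are the exact analogues of those of $\mathbb{SEM}$, and the only adjustment is the absence of a greatest element in $\mathbb{EPI}$, which is immaterial since $\mathbf V$ is proper.

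I expect the main obstacle to be twofold. The technically delicate part is the explicit construction of the forbidding pentagons, especially for the group atom $\mathbf{A}_p$: one must pin down the exact subvariety lattice of $[\mathbf T,\mathbf{A}_p\vee\mathbf{ZM}]$ (and the analogous band interval) and choose the flanking varieties $\mathbf Y<\mathbf Z$ precisely, since a careless choice yields a genuinely modular configuration and no contradiction. The second delicate point is the final assembly $\mathbf V=\mathbf M\vee\mathbf N$: proving that the absence of the three atoms not merely is necessary but actually forces the join decomposition requires the full nilpotent-extension structure theorem for periodic varieties, and it is there that one must check that the largest nil-subvariety $\mathbf N$, together with $\mathbf M$, recaptures the whole of $\mathbf V$.
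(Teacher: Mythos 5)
The paper itself does not prove Proposition~\ref{mod nec}: it is stated with a \qed{} and attributed to \cite[Proposition~1.6]{Jezek-McKenzie-93} together with \cite[Proposition~2.1]{Vernikov-07}, to the direct proof in \cite{Shaprynskii-12}, and to \cite[Theorem~1.2]{Shaprynskii-Skokov-Vernikov-16} for the epigroup half. So your attempt is a from-scratch reconstruction, and it contains a genuine gap at its very first step: the claimed equivalence between decomposability as $\mathbf M\vee\mathbf N$ and the absence of the atoms $\mathbf{LZ}$, $\mathbf{RZ}$, $\mathbf{A}_p$. The ``only if'' half is fine, but the ``if'' half is false, so the proposition does not ``amount to ruling out these three atoms''. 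Consider the variety $\var(B_2)$ generated by the five-element Brandt semigroup $B_2=\langle a,b\mid aba=a,\ bab=b,\ a^2=b^2=0\rangle$. It satisfies $x^2\approx x^3$, hence contains no $\mathbf{A}_p$, and it satisfies $x^2y^2\approx y^2x^2$ (in $B_2$ either one of $x^2,y^2$ is $0$ or both are idempotents $ab,ba$, whose products in either order coincide), an identity failing in both $\mathbf{LZ}$ and $\mathbf{RZ}$; so none of the forbidden atoms lies in $\var(B_2)$. Yet every variety of the form $\mathbf M\vee\mathbf N$ with $\mathbf M\in\{\mathbf T,\mathbf{SL}\}$ and $\mathbf N$ of nil-index $n$ satisfies $x^ny\approx yx^n$, whereas in $B_2$ one has $(ab)^na=a\neq 0=a(ab)^n$. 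Hence $\var(B_2)$ is a proper, periodic, group-free, $\mathbf{LZ}$- and $\mathbf{RZ}$-free variety admitting no decomposition of the required form, and your parallel structural claim that every member of such a variety is a nilpotent extension of a semilattice is refuted by $B_2$ itself, which is combinatorial $0$-simple with non-central idempotents.

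The consequence is that even if all the pentagons excluding the atoms were constructed correctly (and you only sketch them), the argument would still omit exactly the hard part of the known proofs: showing that a proper modular variety must satisfy identities of the form $x^n\approx x^{n+1}$ \emph{and} $x^ny\approx yx^n$, i.e.\ cannot contain semigroups such as $B_2$ with non-central idempotents. That step is not localizable to atoms of $\mathbb{SEM}$ and requires either further, less obvious $N_5$ configurations or the identity-theoretic analysis of \cite{Jezek-McKenzie-93} and \cite{Shaprynskii-12}. The parts of your plan that do work --- periodicity via the infinite cyclic semigroup generating the variety of all commutative semigroups and hence every $\mathbf{A}_p$, the left--right duality reducing $\mathbf{RZ}$ to $\mathbf{LZ}$, and the identification of periodic epigroup varieties with periodic semigroup varieties --- are consistent with the literature the paper cites, but they do not close this central gap.
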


\medskip

The semigroup part of Proposition~\ref{mod nec} was established --- though in a slightly weaker form and using different terminology ---  in~\cite[Proposition~1.6]{Jezek-McKenzie-93}. A deduction of Proposition~\ref{mod nec} from~\cite[Proposition~1.6]{Jezek-McKenzie-93} was given explicitly in~\cite[Proposition~2.1]{Vernikov-07}. A direct and transparent proof of the ``semigroup half'' of Proposition~\ref{mod nec} not depending on a technique from~\cite{Jezek-McKenzie-93} is provided in~\cite{Shaprynskii-12}. The epigroup part of Proposition~\ref{mod nec} corresponds to a weaker form of~\cite[Theorem~1.2]{Shaprynskii-Skokov-Vernikov-16}.

In \cite{Jezek-81}, Je\v{z}ek describes the modular elements of the lattice of all varieties (more precisely, all equational theories) of a given type. In particular, it follows from \cite[Lemma~6.3]{Jezek-81} that if a nil-variety of semigroups $\mathbf V$ is a modular element of the lattice of all groupoid varieties then $\mathbf V$ may be given by 0-reduced and substitutive identities only. This does not imply directly the same conclusion for modular nil-varieties because a modular element of $\mathbb {SEM}$ need not be a modular element of the lattice of all groupoid varieties. 

Proposition~\ref{mod nec} and Lemma~\ref{join with SL} completely reduce the study of modular varieties to the case of nil-varieties.
As a consequence of the preceding discussion, we now formulate the following proposition, which was proved in~\cite[Theorem~2.5]{Vernikov-07} for the semigroup case, and in~\cite[Theorem~1.6]{Shaprynskii-Skokov-Vernikov-16} for the epigroup case.

\medskip

\begin{proposition}
\label{SEM cmod nil-nec}
Suppose a proper variety of semigroups or epigroups \( \mathbf{V} \) is a modular element of the respective lattice. If $\mathbf V$ satisfies an identity $\mathbf u\approx\mathbf v$ but does not satisfy $\mathbf u\approx 0$ then the identity $\mathbf u\approx\mathbf v$ is substitutive.
\qed
\end{proposition}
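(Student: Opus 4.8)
The plan is to argue by contraposition, using the characterization of modular elements as those that cannot occur as the central vertex of a copy of $N_5$ (recalled in the introduction, after~\cite{Volkov-05}). So suppose $\mathbf V$ is proper and modular, $\mathbf V\models\mathbf u\approx\mathbf v$, $\mathbf V\not\models\mathbf u\approx 0$, and, for contradiction, that $\mathbf u\approx\mathbf v$ is not substitutive. I would exhibit two varieties $\mathbf Y\subsetneq\mathbf Z$ with
\[
\mathbf Y\vee\mathbf V=\mathbf Z\vee\mathbf V \quad\text{and}\quad \mathbf Y\wedge\mathbf V=\mathbf Z\wedge\mathbf V,
\]
so that $\mathbf V$ becomes the centre of an $N_5$. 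Indeed, applying modularity of $\mathbf V$ to $\mathbf Y\le\mathbf Z$ gives
\[
(\mathbf V\vee\mathbf Y)\wedge\mathbf Z=(\mathbf V\wedge\mathbf Z)\vee\mathbf Y;
\]
by the two displayed equalities and absorption the left-hand side equals $(\mathbf V\vee\mathbf Z)\wedge\mathbf Z=\mathbf Z$ and the right-hand side equals $(\mathbf V\wedge\mathbf Y)\vee\mathbf Y=\mathbf Y$, whence $\mathbf Z=\mathbf Y$, contradicting $\mathbf Y\subsetneq\mathbf Z$. Thus the whole proof reduces to constructing such a pair.

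The engine for the meet equality is elementary, using that $\mathbf a\approx 0$ forces $\mathbf b\approx 0$ precisely when $\mathbf a\le\mathbf b$. Since $\mathbf u\approx\mathbf v$ is not substitutive, I would first reduce to the case $\mathbf u\not\sim\mathbf v$: when instead $\mathbf u\sim\mathbf v$ but $\alphabet(\mathbf u)\ne\alphabet(\mathbf v)$, substituting a letter occurring on one side only turns $\mathbf u\approx\mathbf v$ into a genuinely non-equivalent consequence that still holds in $\mathbf V$ and whose sides remain nonzero there (this small check is needed). So assume $\mathbf u\not\sim\mathbf v$, and pick $\mathbf w\in\{\mathbf u,\mathbf v\}$ with $\mathbf w\not\le\mathbf w'$, where $\mathbf w'$ is the other side. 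Put $\mathbf Z=\var\{\mathbf w\approx 0\}$ and $\mathbf Y=\mathbf Z\wedge\var\{\mathbf w'\approx 0\}$. Then $\mathbf Y\subsetneq\mathbf Z$ because $\mathbf w\not\le\mathbf w'$; and since $\mathbf V\models\mathbf w\approx\mathbf w'$, the meet $\mathbf Z\wedge\mathbf V$ satisfies both $\mathbf w\approx 0$ and $\mathbf w\approx\mathbf w'$, hence $\mathbf w'\approx 0$, so $\mathbf Z\wedge\mathbf V\subseteq\var\{\mathbf w'\approx 0\}$ and therefore $\mathbf Z\wedge\mathbf V=\mathbf Y\wedge\mathbf V$. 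This secures the meet equality for free.

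The difficulty is the join equality, that is, $\mathbf Z\subseteq\mathbf Y\vee\mathbf V$: every identity holding in both $\mathbf Y$ and $\mathbf V$ must already hold in $\mathbf Z$. The single law $\mathbf u\approx 0$ distinguishing $\mathbf Y$ from $\mathbf Z$ does fail in $\mathbf V$, which is exactly the content of the hypothesis $\mathbf V\not\models\mathbf u\approx 0$; but this alone is not enough. The fully $0$-reduced choice $\mathbf Z=\var\{\mathbf w\approx 0\}$ is in general too large: it satisfies so few identities that laws holding in $\mathbf V$ and holding \emph{vacuously} in $\mathbf Y$ (both sides being $0$) fail in $\mathbf Z$, so $\mathbf Y\vee\mathbf V$ stays strictly below $\mathbf Z$. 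For instance, if $\mathbf V$ is commutative then $xy\approx yx$ holds in both $\mathbf Y$ and $\mathbf V$ yet fails in a free $0$-reduced $\mathbf Z$. The remedy is to replace the side varieties by smaller ones, obtained by intersecting with a suitable part of $\mathbf V$, so that $\mathbf Y$ and $\mathbf Z$ differ \emph{only} by the law $\mathbf u\approx 0$ and every identity common to $\mathbf Y$ and $\mathbf V$ already holds in $\mathbf Z$. Establishing this minimal-difference property is the crux, and it is precisely where the fine combinatorics of the order $\le$, the equivalence $\sim$, the $\mathbf V$-stabilizers $\Stab_{\mathbf V}$, and the specific manner in which substitutivity fails must all be brought to bear. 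I expect this step to be the main obstacle and to carry essentially the whole weight of the argument.

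For epigroups the same plan applies with no structural change. The variety $\mathbf{SL}$ is neutral in $\mathbb{EPI}$ as well (this already underlies Lemma~\ref{join with SL}), and the hypothesis $\mathbf V\not\models\mathbf u\approx 0$ keeps the nonzero word $\mathbf u$ under control; the side varieties $\mathbf Y\subsetneq\mathbf Z$ are then built by the same recipe inside $\mathbb{EPI}$, the unary operation merely enlarging the ambient stock of identities without affecting the meet and join computations. Finally, one may also package the contradiction through Lemma~\ref{surjective homomorphism}: it suffices to produce a surjective lattice homomorphism from $\mathbb{SEM}$ (respectively $\mathbb{EPI}$) onto a small lattice in which the image of $\mathbf V$ is forced into the centre of an $N_5$. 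In every formulation, the decisive step remains the construction securing the join equality.
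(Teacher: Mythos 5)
There is a genuine gap, and you flag it yourself: the join equality $\mathbf V\vee\mathbf Y=\mathbf V\vee\mathbf Z$ is never established. Your choice $\mathbf Z=\var\{\mathbf w\approx 0\}$, $\mathbf Y=\var\{\mathbf w\approx 0,\ \mathbf w'\approx 0\}$ gives the strict inclusion $\mathbf Y\subsetneq\mathbf Z$ and the meet equality cheaply, but, as your own commutativity example shows, any identity of $\mathbf V$ whose sides become $0$ in $\mathbf Y$ will typically separate $\mathbf Y\vee\mathbf V$ from $\mathbf Z$. The proposed remedy --- shrinking the side varieties by ``intersecting with a suitable part of $\mathbf V$'' so that $\mathbf Y$ and $\mathbf Z$ differ only by the single law $\mathbf u\approx 0$ --- is exactly the construction that carries the entire weight of the result, and it is not carried out. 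Until that pair $(\mathbf Y,\mathbf Z)$ is exhibited and the equality of joins is verified (which requires analysing deductions of identities from the defining sets, i.e.\ the combinatorics of $\le$, $\sim$ and the way substitutivity fails), the argument proves nothing beyond the easy half of the pentagon. The small reduction in the case $\mathbf u\sim\mathbf v$ with $\alphabet(\mathbf u)\neq\alphabet(\mathbf v)$ is also only sketched, though that part is routine.

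For what it is worth, the paper does not prove this proposition at all: it is imported, with the semigroup case credited to \cite[Theorem~2.5]{Vernikov-07} and the epigroup case to \cite[Theorem~1.6]{Shaprynskii-Skokov-Vernikov-16}. Your overall strategy (contradiction via an $N_5$ centred at $\mathbf V$, with $0$-reduced varieties as the side elements) is the standard one used in those sources, so the plan is sound in outline; but as submitted it is a proof skeleton with the decisive step missing, not a proof.
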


\medskip

A periodic variety of epigroups can be defined by identities involving only multiplication (see, for instance,\cite{Shevrin-94} or\cite{Shevrin-05}). Consequently, periodic varieties of epigroups may be identified with periodic varieties of semigroups. Therefore, in what follows, we consider only semigroup identities when dealing with both epigroup and semigroup varieties.

Proposition~\ref{mod nec} immediately implies

\begin{corollary}
\label{periodic}
If a semigroup \textup[epigroup\textup] variety \( \mathbf{V} \) is a modular element of the lattice \( \mathbb{EPI} \), then \( \mathbf{V} \) is periodic. \qed
\end{corollary}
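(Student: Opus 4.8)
The plan is to read off periodicity directly from the structural decomposition supplied by Proposition~\ref{mod nec}. First I would apply that proposition to $\mathbf{V}$: since $\mathbf{V}$ is a modular element (and, in the epigroup case, automatically proper, as $\mathbb{EPI}$ has no greatest element), it can be written as $\mathbf{V} = \mathbf{M} \vee \mathbf{N}$, where $\mathbf{M} \in \{\mathbf{T}, \mathbf{SL}\}$ and $\mathbf{N}$ is a nil-variety. It then suffices to exhibit a single periodicity identity satisfied by both joinands and to observe that it is inherited by their join.

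Next I would show that a nil-variety is periodic in a quantitatively uniform way. The free object of $\mathbf{N}$ on one generator $x$ lies in $\mathbf{N}$ and is therefore a nilsemigroup, so $x^n = 0$ in this free object for some positive integer $n$; equivalently, the identity $x^n \approx 0$ holds throughout $\mathbf{N}$, and in particular so does $x^n \approx x^{n+1}$. On the other side, the variety $\mathbf{SL}$ satisfies $x \approx x^2$, hence satisfies $x^m \approx x^{m+1}$ for every $m \ge 1$, while the trivial variety $\mathbf{T}$ satisfies every identity. Thus both $\mathbf{M}$ and $\mathbf{N}$ satisfy the one identity $x^n \approx x^{n+1}$ for the value of $n$ produced above.

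Finally, recalling that $\mathbf{M} \vee \mathbf{N}$ satisfies precisely those identities common to $\mathbf{M}$ and $\mathbf{N}$, I conclude that $\mathbf{V} = \mathbf{M} \vee \mathbf{N}$ itself satisfies $x^n \approx x^{n+1}$. An identity of this form forces every one-generated subsemigroup of a member of $\mathbf{V}$ to be finite, so $\mathbf{V}$ is periodic, as claimed.

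I do not expect a serious obstacle here, which is what the phrase ``immediately implies'' signals. The only step warranting a word of care is the uniform bound $x^n \approx 0$ for a nil-variety; rather than treating this as folklore, I would justify it through the monogenic free object as above (alternatively, via closure of a variety under arbitrary direct products, which rules out unbounded nilpotency indices). Everything else is a routine application of the fact that joins in a variety lattice correspond to intersections of the associated equational theories.
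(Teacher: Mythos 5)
Your proof is correct and follows exactly the route the paper intends: the paper derives this corollary immediately from Proposition~\ref{mod nec}, and your argument simply fills in the routine details (the uniform identity $x^n\approx x^{n+1}$ for the nil-part via the monogenic relatively free object, the same identity for $\mathbf{T}$ and $\mathbf{SL}$, and the fact that a join satisfies the common identities). Nothing is missing; the properness caveat you note for the epigroup case is handled appropriately.
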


\medskip

Proposition~\ref{SEM cmod nil-nec} provides a strong necessary condition for a variety to be modular. The following proposition gives some sufficient condition for modular varieties.

\medskip

\begin{proposition}
\label{SEM cmod suf}
Every $0$-reduced variety of semigroups \textup(or epigroups\textup) is a modular element of the lattice $\mathbb{SEM}$ \textup(respectively, $\mathbb{EPI}$\textup). \qed
\end{proposition}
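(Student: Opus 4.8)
The plan is to pass to the anti-isomorphic lattice of equational theories. For a variety $\mathbf X$ let $\tau_{\mathbf X}$ denote its fully invariant congruence on the free semigroup $F$ (for the epigroup case, on the free unary semigroup $W$), so that $\mathbf X \mapsto \tau_{\mathbf X}$ is an order-reversing bijection under which $\mathbf A \vee \mathbf B$ corresponds to $\tau_{\mathbf A} \cap \tau_{\mathbf B}$ and $\mathbf A \wedge \mathbf B$ to $\tau_{\mathbf A} \vee \tau_{\mathbf B}$. Here $\tau_{\mathbf A} \vee \tau_{\mathbf B}$ is simply the transitive closure of $\tau_{\mathbf A} \cup \tau_{\mathbf B}$: both relations are fully invariant congruences, and the transitive closure of a union of fully invariant relations is again fully invariant. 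Fix a $0$-reduced variety $\mathbf V$ and arbitrary $\mathbf Y \le \mathbf Z$; I must verify $(\mathbf V \vee \mathbf Y) \wedge \mathbf Z = (\mathbf V \wedge \mathbf Z) \vee \mathbf Y$. The inclusion $(\mathbf V \wedge \mathbf Z) \vee \mathbf Y \le (\mathbf V \vee \mathbf Y) \wedge \mathbf Z$ is the modular inequality, which holds in every lattice as soon as $\mathbf Y \le \mathbf Z$, so only the reverse inclusion requires work. Writing $a = \tau_{\mathbf V}$, $y = \tau_{\mathbf Z}$, $z = \tau_{\mathbf Y}$ --- so that $\mathbf Y \le \mathbf Z$ becomes $y \subseteq z$ --- the reverse inclusion translates into
\[
(a \vee y) \cap z \subseteq (a \cap z) \vee y.
\]

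The second ingredient is the special shape of $a = \tau_{\mathbf V}$. Since $\mathbf V$ is $0$-reduced, a pair $(\mathbf p, \mathbf q)$ lies in $a$ if and only if either $\mathbf p = \mathbf q$ or both $\mathbf p$ and $\mathbf q$ belong to the fully invariant ideal $Z = \{\mathbf w \mid \mathbf V \models \mathbf w \approx 0\}$. In particular, every non-diagonal pair of $a$ joins two words of $Z$.

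To prove the displayed inclusion, take $(\mathbf u, \mathbf v) \in (a \vee y) \cap z$ and fix a chain $\mathbf u = \mathbf w_0, \mathbf w_1, \dots, \mathbf w_n = \mathbf v$ in which every consecutive pair lies in $a$ or in $y$. If all $a$-steps are diagonal, then $(\mathbf u, \mathbf v) \in y \subseteq (a \cap z) \vee y$ and we are done. Otherwise, let $(\mathbf w_i, \mathbf w_{i+1})$ and $(\mathbf w_j, \mathbf w_{j+1})$ be the first and the last non-diagonal $a$-steps; then $\mathbf w_i, \mathbf w_{j+1} \in Z$, whence $(\mathbf w_i, \mathbf w_{j+1}) \in a$. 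All steps preceding index $i$ and all steps following index $j+1$ are $y$-steps (diagonal $a$-steps may be discarded), so $(\mathbf u, \mathbf w_i)$ and $(\mathbf w_{j+1}, \mathbf v)$ lie in $y$. Because $y \subseteq z$ and $(\mathbf u, \mathbf v) \in z$, transitivity and symmetry of $z$ give $(\mathbf w_i, \mathbf w_{j+1}) \in z$, hence $(\mathbf w_i, \mathbf w_{j+1}) \in a \cap z$. Chaining the memberships $(\mathbf u, \mathbf w_i) \in y$, $(\mathbf w_i, \mathbf w_{j+1}) \in a \cap z$, and $(\mathbf w_{j+1}, \mathbf v) \in y$ now places $(\mathbf u, \mathbf v)$ in $(a \cap z) \vee y$, as required.

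The epigroup case runs in parallel. Every nilsemigroup has identically zero pseudoinversion, so a $0$-reduced epigroup variety again has a theory of the form \emph{``equality, or both words in the fully invariant ideal $Z$''}, and the lattice operations on the epigroup theories over the free unary semigroup admit the same transitive-closure description of joins. I expect the genuine combinatorial content --- the chain compression in the previous paragraph --- to be short; the main care is needed in the foundational setup, namely confirming the anti-isomorphism and the description of $\tau_{\mathbf A} \vee \tau_{\mathbf B}$ as the transitive closure of $\tau_{\mathbf A} \cup \tau_{\mathbf B}$ on the relevant free object, all the more so for $\mathbb{EPI}$, which has no greatest element and hence no single free epigroup to work over.
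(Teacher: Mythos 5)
The paper itself offers no proof of Proposition~\ref{SEM cmod suf}: it is stated with an immediate \textup{\qedsymbol} and attributed to \cite{Vernikov-Volkov-88}, \cite{Jezek-McKenzie-93} and \cite{Shaprynskii-Skokov-Vernikov-16}, so any self-contained argument is ``different'' from the paper by default. Your semigroup argument is correct and complete, and it is essentially the classical argument of those sources: pass to fully invariant congruences on $F$, note that the congruence $a$ of a $0$-reduced variety is the diagonal together with $Z\times Z$ for the fully invariant ideal $Z=\I(\mathbf V)$, and compress an arbitrary $(a\cup y)$-chain from $\mathbf u$ to $\mathbf v$ into the three-step chain $\mathbf u\mathrel{y}\mathbf w_i\mathrel{(a\cap z)}\mathbf w_{j+1}\mathrel{y}\mathbf v$ by locating the first and last nontrivial $a$-steps. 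Each supporting claim checks out: the join of two fully invariant congruences is the transitive closure of their union (compatibility and full invariance survive transitive closure), every word occurring in a nontrivial deduction from identities $\mathbf w\approx 0$ lies in $Z$, and your translation of the modular law through the anti-isomorphism is the correct self-dual form.

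The one genuine gap is the epigroup half, which you flag but do not close. Two facts are needed there and neither is automatic: first, that $\mathbb{EPI}$ is a sublattice of the lattice of all unary semigroup varieties, so that joins and meets in $\mathbb{EPI}$ are computed equationally over the free unary semigroup --- without this the whole congruence translation is unavailable, since there is no free epigroup; second, that the fully invariant congruence of a $0$-reduced epigroup variety again has the form ``diagonal union $Z\times Z$,'' which requires checking that deduction steps involving the unary operation (and whatever auxiliary identities force epigroup-ness) keep all intermediate words inside $Z$. Your remark that nilsemigroups have identically zero pseudoinversion points in the right direction, but it settles neither issue, because $\mathbf Y$ and $\mathbf Z$ range over arbitrary, not necessarily periodic, epigroup varieties, so one cannot retreat to the periodic sublattice common to $\mathbb{SEM}$ and $\mathbb{EPI}$. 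These are precisely the facts supplied in \cite{Shaprynskii-Skokov-Vernikov-16}; citing or proving them is needed before ``the epigroup case runs in parallel'' becomes a proof rather than an expectation.
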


\medskip

For semigroup varieties, this  proposition was noted for the first time in~\cite[Corollary~3]{Vernikov-Volkov-88} and rediscovered (in different terminology) in~\cite[Proposition~1.1]{Jezek-McKenzie-93}. For epigroup varieties, it was noted in~\cite{Shaprynskii-Skokov-Vernikov-16}.

Propositions~\ref{SEM cmod nil-nec} and~\ref{SEM cmod suf} provide a necessary and a sufficient condition for a variety to be modular respectively. The gap between these conditions seems to be not very large. But the necessary condition is not a sufficient one, while the sufficient condition is not a necessary one (this follows from Proposition~\ref{SEM cmod commut} below).

\bigskip

 A natural and important subclass of the class of substitutive identities is the class of permutational identities. The identity
\begin{equation}
\label{permut id}
x_1x_2\cdots x_m\approx x_{1\pi}x_{2\pi}\cdots x_{m\pi}
\end{equation}
where $\pi\in S_m$ is denoted by $p_m[\pi]$. If the permutation $\pi$ is non-trivial then this identity is called \emph{permutational}. The number $m$ is called the \emph{length} of this identity. The strongest permutational identity is the commutative law. Modular varieties satisfying this law are completely classified by the following

\medskip

\begin{proposition}[\!\!{\mdseries\cite[Theorem~3.1]{Vernikov-07}}]
\label{SEM cmod commut}
A commutative semigroup variety $\mathbf V$ is a modular element of $\mathbb{SEM}$ if and only if $\mathbf{V=M\vee N}$ where $\mathbf M$ is one of the varieties $\mathbf T$ or $\mathbf{SL}$ and $\mathbf N$ satisfies the identity
\begin{equation}
\label{xxy=0}
x^2y=0\ldotp  
\end{equation} \qed
\end{proposition}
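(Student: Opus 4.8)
First I would reduce both implications to a single statement about nil-varieties. If the commutative variety $\mathbf V$ is modular then, being proper, it decomposes as $\mathbf V = \mathbf M \vee \mathbf N$ with $\mathbf M \in \{\mathbf T, \mathbf{SL}\}$ and $\mathbf N$ a nil-variety by Proposition~\ref{mod nec}; in the converse direction this decomposition is given. Since $\mathbf N \subseteq \mathbf V$, the variety $\mathbf N$ is commutative, and since $\mathbf M \subseteq \mathbf{SL}$ we have $\mathbf V \vee \mathbf{SL} = \mathbf N \vee \mathbf{SL}$. Applying Lemma~\ref{join with SL} to $\mathbf V$ and to $\mathbf N$ then shows that $\mathbf V$ is modular if and only if $\mathbf N$ is. Hence it suffices to prove that a commutative nil-variety $\mathbf N$ is a modular element of $\mathbb{SEM}$ if and only if it satisfies the identity $x^2 y \approx 0$.

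For the necessity I would exploit the substitutivity obstruction already available. Because $\mathbf N$ is commutative it satisfies $x^2 y \approx x y x$; the two sides have the same alphabet, but $xyx$ cannot be obtained from $x^2 y$ by renaming letters, so this identity is \emph{not} substitutive. As $\mathbf N$ is proper and modular, Proposition~\ref{SEM cmod nil-nec} forbids such a non-substitutive identity from holding unless its sides are annihilated; thus $\mathbf N$ must satisfy $x^2 y \approx 0$ (which, in the commutative setting, is the same as $xyx \approx 0$). This is the short half of the argument.

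The sufficiency is the substantial part, and I would establish it through the criterion of Volkov recalled in the introduction, by showing that such an $\mathbf N$ can never be the central vertex of a copy of $N_5$ (Fig.~\ref{Pent}). The governing structural fact is that, modulo commutativity and $x^2 y \approx 0$, the only words not equal to $0$ in $\mathbf N$ are the products of pairwise distinct letters and the isolated squares $x^2$; consequently every identity true in $\mathbf N$ is a consequence of commutativity together with the $0$-reduced identities holding in $\mathbf N$. The variety defined by those $0$-reduced identities alone is already modular by Proposition~\ref{SEM cmod suf}, so the only thing needing control is the effect of adjoining commutativity. Suppose, toward a contradiction, that $\mathbf A \subsetneq \mathbf B$ while $\mathbf N \vee \mathbf A = \mathbf N \vee \mathbf B$ and $\mathbf N \wedge \mathbf A = \mathbf N \wedge \mathbf B$. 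Picking an identity holding in $\mathbf A$ but not in $\mathbf B$, the equality of the joins forces it to fail in $\mathbf N$, while the equality of the meets forces it to be derivable from the identities of $\mathbf N$ and of $\mathbf B$ taken together. Since a derivation step coming from $\mathbf N$ only permutes the letters of a subword or sends a subword to $0$, I would argue that such a mixed derivation can be reorganised into one using $\mathbf B$ alone or $\mathbf N$ alone, contradicting the choice of the identity either way.

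The main obstacle is precisely this last separation of derivations: one must prove that, over a commutative nil-variety satisfying $x^2 y \approx 0$, the consequences of $\mathbf N$ and of $\mathbf B$ do not genuinely interact, so that every identity deducible from their union is already deducible from one of them. Everything else is a routine bookkeeping of the reductions and of Propositions~\ref{mod nec}, \ref{SEM cmod nil-nec}, and~\ref{SEM cmod suf} together with Lemma~\ref{join with SL}; it is the combinatorics of these deductions --- where the hypothesis $x^2 y \approx 0$, which keeps the non-zero words extremely simple (linear words and single squares), is indispensable --- that carries the real weight.
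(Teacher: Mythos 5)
The paper does not actually prove this proposition: it is imported from Vernikov's work (cited as Theorem~3.1 of~\cite{Vernikov-07}), so there is no internal proof to compare against, only the surrounding machinery. Measured against that machinery, your reduction and your necessity argument are correct. Passing from $\mathbf V$ to the nil part $\mathbf N$ via Proposition~\ref{mod nec} and Lemma~\ref{join with SL} works, and the observation that commutativity yields the non-substitutive identity $x^2y\approx xyx$, to which Proposition~\ref{SEM cmod nil-nec} applies, does give $x^2y\approx 0$. Note that this proposition must indeed be applied to the nil-variety $\mathbf N$ rather than to $\mathbf V$ itself: when $\mathbf M=\mathbf{SL}$, the variety $\mathbf V$ also satisfies $x^2y\approx xyx$ but cannot satisfy $x^2y\approx 0$, so the reduction step you perform first is not optional.

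The gap is in the sufficiency, and it is precisely the claim you defer as the ``main obstacle'': that the consequences of $\mathbf N$ and of $\mathbf B$ do not interact, i.e.\ that every identity deducible from their union is deducible from one of them alone. This is false. Take $\mathbf B$ defined by $xy\approx x^2y^2$ and $\mathbf N$ defined by $xy\approx yx$ and $x^2y\approx 0$. Then $\mathbf N\wedge\mathbf B$ satisfies $xy\approx x^2y^2\approx x^2y\approx 0$, although $xy\approx 0$ holds neither in $\mathbf N$ (the four-element commutative semigroup $\{a,b,ab,0\}$ with all other products zero lies in $\mathbf N$ and has $ab\ne 0$) nor in $\mathbf B$ (nontrivial semilattices lie in $\mathbf B$). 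So a mixed derivation cannot in general be reorganised into a pure one, and the strategy as described cannot be completed. What must actually be shown is the weaker statement that the particular identity $\mathbf u\approx\mathbf v$ --- holding in $\mathbf A$ and in $\mathbf N\wedge\mathbf B$ but failing in $\mathbf B$ --- already holds in $(\mathbf N\vee\mathbf A)\wedge\mathbf B$; this requires following a deduction chain and treating separately the case where some intermediate word falls into $\I(\mathbf N\wedge\mathbf B)$, exactly as in the proof of Proposition~\ref{nil-suf}. Within the present paper the cleanest completion is to use your (correct) structural remark that the only words not annihilated by $\mathbf N$ are the multilinear words, the single letters and the squares $x^2$: hence every non-substitutive identity of $\mathbf N$ has both sides equal to $0$ (condition (a)), the $\mathbf N$-stabilizer of every non-annihilated word is the full symmetric group on its alphabet (so condition (b) holds and condition (c') holds vacuously), and Theorem~\ref{sufficiency} then yields the modularity of $\mathbf M\vee\mathbf N$ directly, with no appeal to Proposition~\ref{SEM cmod suf} or to a separation of derivations.
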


The next natural step in this investigation is the following description of a modular semigroup variety satisfying a permutational identity of length $3$.

\medskip

\begin{proposition}[\!\!{\mdseries\cite[Theorem~1.1]{Skokov-Vernikov-19}}]
\label{permut-3}
A semigroup variety $\mathbf V$ satisfying a permutational identity of length $3$ is a modular element in the lattice $\mathbb{SEM}$ if and only if $\mathbf V=\mathbf{M\vee N}$ where $\mathbf M$ is one of the  varieties $\mathbf T$ or $\mathbf{SL}$, while the variety $\mathbf N$ satisfies one of the following identity systems:

\begin{align*}
&xyz\approx zyx,\,x^2y\approx 0;\\
&xyz\approx yzx,\,x^2y\approx 0;\\
&xyz\approx yxz,\,xyzt\approx xzty,\,xy^2\approx 0;\\
&xyz\approx xzy,\,xyzt\approx yzxt,\,x^2y\approx 0. \qed
\end{align*} 
\end{proposition}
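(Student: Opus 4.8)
The plan is to reduce to nil-varieties and then split according to the type of the permutational law. By Proposition~\ref{mod nec} and Lemma~\ref{join with SL}, a proper modular $\mathbf V$ satisfying a length-$3$ permutational identity may be written as $\mathbf M\vee\mathbf N$ with $\mathbf M\in\{\mathbf T,\mathbf{SL}\}$ and $\mathbf N$ a modular nil-variety; since $\mathbf T$ and $\mathbf{SL}$ satisfy every permutational law the identity $p_3[\pi]$ descends to $\mathbf N$, and reattaching $\mathbf M$ afterwards is harmless by Lemma~\ref{join with SL}. Thus it suffices to describe the modular nil-varieties $\mathbf N$ satisfying a nontrivial $p_3[\pi]$. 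The set of such $\pi$ is the subgroup $G=\Stab_{\mathbf N}(xyz)\le S_3$; discarding the case $G=S_3$ (covered by Proposition~\ref{SEM cmod commut}), the remaining generators of $G$ are the three transpositions and a $3$-cycle, which will produce the four rows of the statement.

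For the forward implication I would feed $\mathbf N$ into Theorem~\ref{necessity} and read off its consequences. Condition~(a) together with a single letter identification in $p_3[\pi]$ supplies the $0$-reduced law: $y\mapsto x$ turns the reversal law $xyz\approx zyx$ into $x^2z\approx zx^2$ and the cyclic law $xyz\approx yzx$ into $x^2z\approx xzx$, while $z\mapsto x$ turns $xyz\approx xzy$ into $x^2y\approx xyx$ and $z\mapsto y$ turns $xyz\approx yxz$ into $xy^2\approx yxy$; each right-hand side is a non-substitutive partner, so Proposition~\ref{SEM cmod nil-nec} forces $x^2y\approx0$ in the first three cases and $xy^2\approx0$ in the $(12)$ case. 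The extra length-$4$ laws in the transposition rows come from Condition~(b). Applying $p_3[\pi]$ to the two inner triples of the multilinear word $x_1x_2x_3x_4$ shows that $\Stab_{\mathbf N}(x_1x_2x_3x_4)$ contains $\langle(12),(23)\rangle$ in the $(12)$ case and $\langle(23),(34)\rangle$ in the $(23)$ case, i.e.\ a one-point stabiliser $S_3\le S_4$, whereas for the reversal and cyclic laws it already contains the Klein four-group $\langle(13),(24)\rangle$, respectively $A_4$. Since this word is nonzero, Condition~(b) forces its stabiliser to be a modular element of $\Sub(S_4)$ (Fig.~\ref{Sub(S_4)}); as the only modular overgroup of the maximal subgroup $S_3$ is $S_4$ itself, the stabiliser must equal $S_4$, and adjoining the missing generator---$(234)$, i.e.\ $xyzt\approx xzty$, in the $(12)$ case and $(123)$, i.e.\ $xyzt\approx yzxt$, in the $(23)$ case---records exactly the length-$4$ identity listed. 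This explains the asymmetry between the four rows.

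For the converse I would verify that each system meets the hypotheses (a), (b), (c$'$) of Theorem~\ref{sufficiency}. The word-reversal anti-automorphism of $\mathbb{SEM}$ fixes the reversal and cyclic systems and interchanges the $(12)$- and $(23)$-systems, so it is enough to treat one system from each dual pair. Condition~(a) is the assertion that, modulo the permutational laws, any word admitting a non-substitutive partner already contains an adjacent repetition and is hence $0$; this I would establish through a normal form for the relatively free semigroup, using that the $x^2y$- or $xy^2$-law annihilates every word with a repeated adjacent letter. Condition~(c$'$) is then a finite inspection showing that the stabilisers that actually occur never realise one of the forbidden pairs. The substance is Condition~(b): one must show that the $\mathbf N$-stabiliser of every surviving word is a modular subgroup of the relevant symmetric group.

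The main obstacle is precisely Condition~(b) uniformly in the word length. The permutational laws determine the stabiliser of a multilinear word only through the subgroup of $S_n$ generated by their windowed applications, and one must prove that this subgroup is modular in $\Sub(S_n)$ for every $n$ on every surviving word---equivalently, by the description recalled in Subsection~\ref{modular in subgroup}, that it contains $A_n$ once $n\ge5$. In the cyclic system the windowed $3$-cycles $(i,i+1,i+2)$ already generate $A_n$; in the two transposition systems the windowed adjacent transpositions generate only the point-stabiliser $S_{n-1}$, and it is exactly the extra length-$4$ identity---whose windowed form contributes a $3$-cycle meeting the last coordinate---that upgrades the generated group to the full $S_n$ at every length. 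Carrying this out uniformly, and in particular pinning down for the reversal system the precise family of surviving words for which the generated (parity-respecting) subgroup is forced into the modular range, is the delicate combinatorial heart of the argument, and is where the exact choice of the four systems gets pinned down.
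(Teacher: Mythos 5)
First, a point of reference: the paper does not prove this proposition at all --- it is imported verbatim from \cite[Theorem~1.1]{Skokov-Vernikov-19} and stated with a \(\qed\). Your plan of rederiving it from Theorems~\ref{necessity} and~\ref{sufficiency} is therefore a legitimate (and genuinely different) route, and your necessity analysis is largely on track: the reduction via Proposition~\ref{mod nec} and Lemma~\ref{join with SL}, the case split by \(\Stab_{\mathbf N}(xyz)\le S_3\), the derivation of the \(0\)-reduced laws from condition (a) by identifying letters, and the use of condition (b) on \(x_1x_2x_3x_4\) to force the extra length-\(4\) identities in the two transposition cases are all correct. However, there is a concretely wrong step in the reversal case: \(\langle(13),(24)\rangle\) is a Klein four-group abstractly, but it is \emph{not} the normal subgroup \(V_4\) of \(S_4\) and does not contain it, so by the criterion recalled in Subsection~\ref{modular in subgroup} it is \emph{not} a modular element of \(\Sub(S_4)\); containing it establishes nothing. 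The reason the reversal system nonetheless needs no extra length-\(4\) law is that \(xyz\approx zyx\) may be applied with \(y\) replaced by a word of length greater than one, which already puts every transposition \((i\,j)\) with \(|i-j|\ge 2\) --- in particular \((14)\) --- into \(\Stab_{\mathbf N}(x_1x_2x_3x_4)\), forcing it to be all of \(S_4\). Your ``windowed, letter-for-letter'' computation of stabilizers systematically undercounts throughout, because the defining identities act under arbitrary endomorphisms, not just adjacent-triple substitutions; this undercount is harmless when you only need lower bounds on a stabilizer, but it is fatal wherever the argument turns on knowing the stabilizer exactly.

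The larger gap is that the sufficiency half is announced rather than proved. The proposition asserts that \emph{every} variety \(\mathbf N\) satisfying one of the four systems (i.e.\ every subvariety of each of the four maximal varieties, possibly satisfying further identities) yields a modular \(\mathbf M\vee\mathbf N\), so conditions (a), (b) and (c\('\)) of Theorem~\ref{sufficiency} must be verified for all such \(\mathbf N\), for all words (not only multilinear ones --- e.g.\ \(xyx\) survives the law \(x^2y\approx 0\)), and for all alphabet sizes. You correctly identify this as the combinatorial heart of the matter and then defer it (``is the delicate combinatorial heart of the argument''), which leaves the converse implication unestablished. In particular condition (a) for an arbitrary subvariety --- that any additional non-substitutive identity it satisfies already forces both sides to be \(0\) --- is not addressed at all, and condition (c\('\)) requires inspecting stabilizers of non-equivalent words sharing an alphabet, which your sketch does not touch. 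As it stands the proposal is a credible outline of the necessity direction with one repairable error, plus a statement of what remains to be done for sufficiency, rather than a proof.
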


\medskip

Theorems~\ref{necessity} and~\ref{sufficiency} establish a necessary and a sufficient condition for a variety to be a modular element of the lattices $\mathbb{SEM}$ and $\mathbb{EPI}$. These conditions form the foundation for a complete description, which will be presented in the second part of our work. 

\section{Proof of Theorems: reduction to G-sets}
\label{reduction to G-sets}

We start with some definitions. A set $M$ is referred to as a \emph{check set} if the following properties hold:
\begin{enumerate}
\item[1.] $\alphabet(\mathbf u) = \alphabet(\mathbf v)$ for all $\mathbf u, \mathbf v \in M$;
\item[2.] $\mathbf u \not< \mathbf v$ for all $\mathbf u, \mathbf v \in M$;
\item[3.] if $\mathbf u \in M, \mathbf u \sim \mathbf v, \alphabet(\mathbf u) = \alphabet (\mathbf v)$ then $\mathbf v \in M$.
\end{enumerate}
A set $M$ is a \emph{weaker check set} if it satisfies the conditions 1. and 3.

\bigskip

Now, let us define $\I(\mathbf V)$ as the set of all elements $\mathbf u \in F$ such that the identity $\mathbf u \approx 0$ holds in the variety $\mathbf V$:

$$\I(\mathbf V) = \{\mathbf u \in F \,\,\,|\,\,\, \mathbf u \approx 0 \,\,\,\textnormal{holds in} \,\,\,\mathbf V\}.$$

\bigskip

Next, let $A$ be a non-empty set, $G$ be a group, and $\phi$ be a homomorphism from $G$ into
the full transformation group of $A$. Then $A$ may be considered as a unary
algebra with the set $G$ of operations where an operation $g \in G$ is defined
by the rule $g(x) = (\phi(g))(x)$ for every $x \in A$. Such algebras are known as
\emph{G-sets}.

The congruence lattice of a $G$-set $A$ is denoted by $\Con(A)$. Suppose $M$ is a weaker check set and $A=\alphabet(\mathbf u)$ for all $\mathbf u\in M$. For a permutation $\sigma\in S(A)$, we assume $\sigma$ acts on $M$ letterwise. It follows from the defitition of a weaker check set that $\sigma(\mathbf u)\in M$ for any $\sigma\in S(A)$ and $\mathbf u\in M$. Therefore, $M$ can be considered a $G$-set with $G=S(M)$. We refer to this $G$-set structure when we mention the congruence lattice $\Con(M)$.

\bigskip

Let $\sim_{\mathbf V}$ denote the fully invariant congruence on the free semigroup $F$ corresponding to a variety $\mathbf V$. The set of all fully invariant congruences on $F$ is a lattice, denoted by $\mathbb{FIC}$. It is a well-known fact that the lattice $\mathbb{SEM}$ is anti-isomorphic to the lattice $\mathbb{FIC}$.

Now we are going to prove a proposition that plays a crucial role in the proof of the main theorems.

\bigskip

\begin{proposition} 
\label{nil-nec}If a nil-variety $\mathbf V$ is a modular element in the lattice $\mathbb{SEM}$ or $\mathbb{EPI}$ then it satisfies the following conditions:
  \begin{enumerate}
  \item[i.] if $\mathbf V$ satisfies the identity $\mathbf u \approx \mathbf v$ but does not satisfy $\mathbf u \approx 0$ then $\mathbf u \approx \mathbf v$ is a substitutive identity;
  \item[ii.] for any check set $M$ such that $M \cap \I(\mathbf V) =\varnothing $, $ \sim_V \mid _M$~is a modular element of the lattice $\Con(M)$.
  \end{enumerate}
\end{proposition}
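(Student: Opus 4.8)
We handle the two assertions separately. Condition (i) requires no new argument: a nil-variety is in particular a proper variety (it is not the variety of all semigroups), so Proposition~\ref{SEM cmod nil-nec} applies verbatim and yields exactly (i), in both the semigroup and the epigroup setting; we use (i) freely in what follows. For (ii) the plan is to exhibit $\Con(M)$ as the image of a surjective lattice homomorphism defined on a sublattice of $\mathbb{FIC}$ that contains $\sim_{\mathbf V}$, and then to invoke Lemma~\ref{surjective homomorphism}. Two remarks make this legitimate. First, being a modular element is a self-dual property: by Volkov's $N_5$-characterization recalled in the introduction, an element is modular precisely when it never occupies the distinguished vertex of a sublattice isomorphic to $N_5$, and both $N_5$ and that vertex are preserved by the order-reversing duality; hence, under the anti-isomorphism $\mathbb{SEM}\cong\mathbb{FIC}^{\mathrm{op}}$, modularity of $\mathbf V$ in $\mathbb{SEM}$ is equivalent to modularity of $\sim_{\mathbf V}$ in $\mathbb{FIC}$. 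Second, a modular element of a lattice remains modular in every sublattice containing it. Since $\mathbf V$ is a nil-variety, hence periodic, every variety produced below is a periodic nil-variety and lives in the common sublattice of periodic varieties shared by $\mathbb{SEM}$ and $\mathbb{EPI}$; thus a single argument disposes of both lattices, the epigroup case using modularity of $\mathbf V$ in $\mathbb{EPI}$ in place of $\mathbb{SEM}$.

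The homomorphism is the restriction map $\chi\colon\theta\mapsto\theta|_M$. Because every fully invariant congruence is invariant under the letter-renaming automorphisms, $\theta|_M$ is compatible with the action of $S(A)$, where $A$ is the common alphabet of the words of $M$, and so is a genuine element of $\Con(M)$; moreover $\chi$ commutes with intersections, that is, with meets in $\mathbb{FIC}$, with no further hypotheses. Let $\Theta_\bot$ be the fully invariant congruence of the $0$-reduced variety whose set of zero words is $\I(\mathbf V)$. Then $\Theta_\bot\subseteq\,\sim_{\mathbf V}$, and since $M\cap\I(\mathbf V)=\varnothing$ the congruence $\Theta_\bot$ identifies no two distinct words of $M$ and sends none of them to $0$; hence $\chi(\Theta_\bot)$ is the equality relation on $M$. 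We take as domain the filter of all $\theta\in\mathbb{FIC}$ with $\Theta_\bot\subseteq\theta$, which is a sublattice of $\mathbb{FIC}$ containing $\sim_{\mathbf V}$. Surjectivity onto $\Con(M)$ is to be obtained through the section $\psi\colon\rho\mapsto\Theta_\bot\vee\langle\rho\rangle$, where $\langle\rho\rangle$ is the fully invariant congruence generated by the pairs of $\rho\subseteq M\times M$, by checking that $\chi(\psi(\rho))=\rho$ for every $\rho\in\Con(M)$.

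The heart of the matter, and the step I expect to be the main obstacle, is the verification that $\chi$ respects joins, equivalently that the identities $\chi(\psi(\rho))=\rho$ and $(\theta_1\vee\theta_2)|_M=\theta_1|_M\vee\theta_2|_M$ hold on the chosen filter. Both reduce to a single \emph{confinement} (or ``no detour'') lemma: whenever two words of $M$ are joined by a chain of elementary substitution steps drawn from congruences of the filter, they can already be joined by such a chain all of whose intermediate terms lie in $M$. This is exactly where the defining properties of a check set enter. The antichain condition $\mathbf u\not<\mathbf v$ forbids a derivation from descending to a strictly smaller word and climbing back; the common-alphabet condition pins down the ambient group $S(A)$; and $M\cap\I(\mathbf V)=\varnothing$, together with condition (i) already established, guarantees that no intermediate word of such a chain collapses to $0$, so that each step between members of $M$ is a genuine renaming of letters which keeps the chain inside $M$.

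Once the confinement lemma is in place, $\chi$ is a surjective lattice homomorphism from the filter $\{\theta\in\mathbb{FIC}:\Theta_\bot\subseteq\theta\}$ onto $\Con(M)$. Applying the two reductions of the first paragraph, $\sim_{\mathbf V}$ is a modular element of this filter, so Lemma~\ref{surjective homomorphism} gives that $\chi(\sim_{\mathbf V})=\,\sim_{\mathbf V}|_M$ is a modular element of $\Con(M)$. This is precisely assertion (ii), and the proof is complete.
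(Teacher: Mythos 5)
Your treatment of item (i) is correct, and the overall architecture you propose for item (ii) --- pass to $\mathbb{FIC}$ via the anti-isomorphism, use self-duality of modularity and the fact that a modular element stays modular in any sublattice containing it, exhibit $\Con(M)$ as a surjective homomorphic image of a restriction map, and prove surjectivity by a derivation-chain induction exploiting the check-set condition $\mathbf u\not<\mathbf v$ --- is exactly the paper's. The surjectivity step via $\psi(\rho)=\Theta_\bot\vee\langle\rho\rangle$ does go through by the induction you sketch. The genuine gap is the ``confinement lemma'' that you yourself flag as the main obstacle and leave unproved: on the domain you chose, namely the whole filter $\{\theta\in\mathbb{FIC}:\Theta_\bot\subseteq\theta\}$, it is \emph{false}, so $\chi\colon\theta\mapsto\theta|_M$ is not a join-homomorphism there and Lemma~\ref{surjective homomorphism} cannot be invoked. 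Concretely, take $\mathbf V$ defined by $x_1\cdots x_9\approx 0$ and the check set $M=\{x^2y^2,\,y^2x^2,\,xyxy,\,yxyx\}$ (two $S(\{x,y\})$-orbits of pairwise incomparable or equivalent words, disjoint from $\I(\mathbf V)$; note $xyyx\notin M$, since it is not a letter-renaming of either generator). Let $\theta_1$ be the fully invariant congruence generated over $\Theta_\bot$ by the pair $(x^2y^2,\,xyyx)$ and $\theta_2$ the one generated by $(xyyx,\,xyxy)$. Both added identities are balanced, so every elementary derivation step preserves length and letter content, and a direct check of the length-$4$ words with content $x^2y^2$ shows that the $\theta_1$-class of each element of $M$ meets $M$ in a singleton, and likewise for $\theta_2$; hence $\theta_1|_M=\theta_2|_M=\Delta_M$ and $\theta_1|_M\vee\theta_2|_M=\Delta_M$. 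Yet $x^2y^2\;\theta_1\;xyyx\;\theta_2\;xyxy$ gives $(x^2y^2,xyxy)\in(\theta_1\vee\theta_2)|_M$. The detour through $xyyx\notin M$ cannot be rerouted inside $M$, and nothing in the check-set axioms (which only constrain comparabilities \emph{among members of $M$}) or in condition (i) (which constrains $\mathbf V$, not arbitrary $\theta_i$ of the filter) prevents this.

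The repair is to shrink the domain, which is precisely what the paper does: instead of the full filter above $\Theta_\bot$, it works with the sublattice $L_M$ of those (congruences of) nil-varieties $\mathbf X$ for which $M$ is a \emph{union of $\sim_{\mathbf X}$-classes}, i.e.\ $\sim_{\mathbf X}\subseteq\alpha$ where $\alpha$ is the two-block partition $\{M,\,F\setminus M\}$. On the ideal $(\alpha]$ one has $(\alpha]\cong\Part(M)\times\Part(F\setminus M)$, so restriction to $M$ is a direct-product projection and is automatically a lattice homomorphism --- no confinement lemma is needed. Condition (i), together with $M\cap\I(\mathbf V)=\varnothing$ and the closure property 3 of a check set, is exactly what guarantees $\sim_{\mathbf V}\in\phi(L_M)$ (any identity $\mathbf u\approx\mathbf v$ of $\mathbf V$ with $\mathbf u\in M$ must be substitutive, hence keeps $\mathbf v$ in $M$); and your own surjectivity computation shows that $\psi(\rho)$ already lands in this smaller sublattice. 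My congruences $\theta_1,\theta_2$ above are excluded from $L_M$ because their classes leak out of $M$, which is why the paper's argument is unaffected. As written, your proof is incomplete at its decisive step, and the step cannot be completed on the domain you chose.
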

\begin{proof} The item i. holds by Proposition~\ref{SEM cmod nil-nec}. It remains to prove ii.

Let $M$ be some check set. Consider the collection $L_M$ of all nil-varieties $\mathbf{X}$ such that $M$ is a union of $\sim_{\mathbf{X}}$-classes. Notice that $L_M$ as a set of semigroup varieties is a sublattice of $\mathbb {SEM}$. Indeed, a semigroup variety $\mathbf X$ is in $L_M$ if and only if $\sim_{\mathbf X}\subseteq\alpha$ where $\alpha$ is the partition of $F$ into two classes $\{M, F\setminus M\}$. So we have $\sim_{\mathbf U}, \sim_{\mathbf W} \subseteq \alpha$ for any $\mathbf U, \mathbf W \in L_M$. Evidently, $\sim_{\mathbf U} \vee \sim_{\mathbf W} \subseteq \alpha$ and $\sim_{\mathbf U} \wedge \sim_{\mathbf W} \subseteq \alpha$. This means that $\mathbf U\vee \mathbf W \in L_M$ and $\mathbf U \wedge \mathbf W \in L_M$.

It follows from the item i. that $\mathbf V\in L_M$. It is clear that $\mathbf V$ is a modular element in this sublattice. Using Lemma~\ref{surjective homomorphism} and the fact that the condition of being a modular element is dual to itself, it is sufficient to show that the rule $\mathbf X \mapsto \sim_{\mathbf X} |_M$ defines a surjective anti-homomorphism from $L_M\rightarrow\Con(M)$ such that $\mathbf V\mapsto\sim_{\mathbf V} |_M$. 

Let $\phi$ be the anti-isomorphism between the lattices $\mathbb{SEM}$ and $\mathbb{FIC}$ restricted to the sublattice $L_M$ of $\mathbb{SEM}$. We denote by $\psi\colon \phi(L_M)\to \Con(M)$ the mapping that sends a relation $\sim$ to its restriction $\sim|_M$.
We will show that $\chi=\psi\cdot\phi$ is a surjective anti-homomorphism of $L_M$ onto $\Con(M)$. It is obvious that $\phi$ is an anti-isomorphism between $L_M$ and $\phi(L_M)$, so it remains to prove that $\psi$ is a surjective homomorphism. It is well known (see, for instance,~\cite{Gratzer-11}) that for the above mentioned partition $\alpha$ we have $$(\alpha ] \cong \Part(M) \times \Part (F \setminus M)$$ where $(\alpha]$ is the principal ideal of $\alpha$ in the partition lattice of $M$. This isomorphism maps any congruence $\beta \subseteq \alpha$ to the pair $(\beta | _M, \beta | _{F \setminus M})$. Hence $\psi$ is a homomorphism, acting as the projection to the first factor of the direct product. 

Now we are going to prove that $\psi$ is surjective. Take any $\beta \in \Con(M)$. Consider the variety $\mathbf Y$ defined by all identities $\mathbf u\approx\mathbf v$ where $(\mathbf u,\mathbf v)\in\beta$ and all identities $\mathbf u\approx 0$ where $\mathbf u>\mathbf v$ for some $\mathbf v\in M$. We will show that  $\mathbf Y \in L_M$ and $\sim_{\mathbf Y} | _M = \beta$. Let $\mathbf u$ be any word from $M$. It is sufficient to prove that, for any word $\mathbf v$ such that the identity $\mathbf u \approx \mathbf v$ holds in $\mathbf Y$, we have $(\mathbf u, \mathbf v) \in \beta$. Let the sequence of words
$$\mathbf u = \mathbf w_0, \mathbf w_1, \mathbf w_2, \dots, \mathbf w_n = \mathbf v$$ 
be a deduction of the identity $\mathbf u \approx \mathbf v$ from the defining identities of $\mathbf Y$. We prove that $\mathbf w_i \beta \mathbf u$ by induction on $i$.

\begin{description}
\item[Base case]{The statement is evident for $i = 0$. Indeed, if $i = 0$ then $\mathbf u = \mathbf w_i$}
\item[Inductive step]{Now let $n > 0$ and $\mathbf u = \mathbf w_0 \beta \mathbf w_1 \beta \mathbf w_2 \beta \dots \beta \mathbf w_i$. Let us prove that $\mathbf w_i \beta \mathbf w_{i+1}$. The identity $\mathbf w_i \approx \mathbf w_{i+1}$ directly follows from some $(\mathbf s, \mathbf t) \in \beta$. This means that, without loss of generality, $\mathbf w_i = \mathbf a \xi (\mathbf s) \mathbf b$, $\mathbf a \xi(\mathbf t) \mathbf b = \mathbf w_{i+1}$ for some $\xi\in\End(F)$. According to the definition, $\mathbf s \le \mathbf a \xi (\mathbf s) \mathbf b = \mathbf w_i\in M$. Recall that $\mathbf s \in M$. This implies that $\mathbf s \sim \mathbf a \xi(\mathbf s) \mathbf b$. Hence the words $\mathbf a$ and $\mathbf b$ are empty and $\xi$ is a permutation on $\alphabet(\mathbf s)$. In other words, $\xi |_{\alphabet(\mathbf s)} \in S(\alphabet(\mathbf s))$. Since $\beta$ is a congruence on $M$, this implies that $(\mathbf w_i, \mathbf w_{i+1})=(\xi(\mathbf s), \xi(\mathbf t)) \in \beta$, whence $(\mathbf u, \mathbf w_{i+1}) \in \beta$.

So, the map $\chi\colon L_M \rightarrow \Con (M)$ is a surjective anti-homomorphism and $\sim_{\mathbf V | _M} = \chi (\mathbf V)$. The variety $\mathbf V$, regardless of being a variety of semigroups or epigroups, is a modular element of the lattice of nilvarieties, and therefore of the lattice $L_M$.  By Lemma~\ref{surjective homomorphism}, $\sim_\mathbf V | _M$ is modular in $\Con(M)$.}
\end{description}

\end{proof}

\begin{proposition}
\label{nil-suf} Suppose a nil-variety $\mathbf V$ satisfies the condition i. of Proposition~\ref{nil-nec} and the condition 
\begin{enumerate}
\item[ii'.] for any weaker check set $M$ such that $M \cap \I(\mathbf V) =\varnothing $, $ \sim_V \mid _M$~is a modular element of the lattice $\Con(M)$.
\end{enumerate}
Then $\mathbf V$ is a modular element of both lattices $\mathbb{SEM}$ and $\mathbb{EPI}$.
\end{proposition}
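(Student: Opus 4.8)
The plan is to argue by contraposition, working throughout in the lattice $\mathbb{FIC}$ of fully invariant congruences on the free semigroup $F$. Since $\mathbb{SEM}$ is anti-isomorphic to $\mathbb{FIC}$ and modularity is a self-dual property, $\mathbf V$ is modular in $\mathbb{SEM}$ exactly when $\rho:=\sim_{\mathbf V}$ is modular in $\mathbb{FIC}$; the epigroup case is handled by the very same congruence-theoretic argument, as in the proof of Proposition~\ref{nil-nec}. So suppose $\rho$ is \emph{not} modular: there are fully invariant congruences $\zeta\le\eta$ and a pair
\[
(\mathbf p,\mathbf q)\in\bigl((\rho\vee\zeta)\wedge\eta\bigr)\setminus\bigl((\rho\wedge\eta)\vee\zeta\bigr).
\]
I would aim to manufacture from this data a weaker check set on which $\rho$ fails to be modular, contradicting condition~(ii$'$). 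First I would record the shape of $\rho$ granted by the hypotheses: as $\mathbf V$ is a nil-variety, $\I(\mathbf V)$ is a single $\rho$-class $Z$, and by condition~(i) every other $\rho$-class is one $\sim$-orbit over a fixed alphabet; thus outside $Z$ the relation $\rho$ does nothing but rename letters.

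The localization then runs as follows. Suppose $M$ is any weaker check set with $\mathbf p,\mathbf q\in M$ and $M\cap\I(\mathbf V)=\varnothing$. Full invariance makes $\rho|_M,\zeta|_M,\eta|_M$ congruences of the $G$-set $M$ (with $G=S(\alphabet(\mathbf p))$) and preserves the inequality $\zeta|_M\le\eta|_M$. Restriction obviously commutes with meets, so $(\rho\wedge\eta)|_M=\rho|_M\wedge\eta|_M$; moreover any chain realising the local join $(\rho|_M\wedge\eta|_M)\vee\zeta|_M$ is a global $((\rho\wedge\eta)\vee\zeta)$-chain, whence that local join is contained in $((\rho\wedge\eta)\vee\zeta)|_M$. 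As $(\mathbf p,\mathbf q)\notin(\rho\wedge\eta)\vee\zeta$, it follows automatically that $(\mathbf p,\mathbf q)\notin(\rho|_M\wedge\eta|_M)\vee\zeta|_M$, while clearly $(\mathbf p,\mathbf q)\in\eta|_M$. Consequently, to witness the non-modularity of $\rho|_M$ in $\Con(M)$ --- and so to contradict~(ii$'$) --- the only thing left is to place $(\mathbf p,\mathbf q)$ in the \emph{local} join $\rho|_M\vee\zeta|_M$, that is, to exhibit a $\rho$-$\zeta$ chain from $\mathbf p$ to $\mathbf q$ running entirely inside $M$.

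This confinement is the heart of the matter and the step I expect to be hardest, precisely because restriction $\tau\mapsto\tau|_M$ preserves meets but not joins: a globally available $\rho$-$\zeta$ chain (which exists since $(\mathbf p,\mathbf q)\in\rho\vee\zeta$) may stray through words lying outside every admissible $M$. I would control this by first choosing the witness $(\mathbf p,\mathbf q)$ as small as possible and reducing to the case of non-zero endpoints sharing a common alphabet $A$ (pairs meeting $Z$ are disposed of directly, $Z$ being a single $\rho$-class), and then by applying to the whole chain a retraction endomorphism $\theta$ of $F$ that fixes the letters of $A$ and drives every other letter into a value forcing all ``foreign'' words into the zero class $Z$. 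By full invariance of $\rho,\zeta,\eta$ the image of the chain is again a $\rho$-$\zeta$ chain with the same (fixed) endpoints, and because $\rho$ renames letters off $Z$ while $Z$ is one $\rho$-class, the excursions outside $A$ collapse, leaving a chain confined to the weaker check set $M$ built from the $\sim$-orbits over $A$ of the surviving non-zero words. The delicate points --- selecting $\theta$ so as neither to identify $\mathbf p$ with $\mathbf q$ nor to destroy the relations $(\mathbf p,\mathbf q)\in\eta$ and the $\zeta$-links, organizing words of smaller alphabet, and treating the mixed zero/non-zero case --- are exactly where the real work lies; once the confinement lemma is in hand, condition~(ii$'$) is contradicted and the proof is complete for both $\mathbb{SEM}$ and $\mathbb{EPI}$.
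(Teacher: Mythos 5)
Your localization strategy is sound for the ``good'' case, and in fact the paper's proof of this case is easier than you anticipate: working directly with varieties $\mathbf U\subseteq\mathbf W$ and a deduction chain $\mathbf u=\mathbf w_0\approx\dots\approx\mathbf w_{k+1}=\mathbf v$ whose steps hold alternately in $\mathbf V$ or $\mathbf W$, one observes that if no $\mathbf w_i$ lies in $\I(\mathbf V\wedge\mathbf W)$ then all the $\mathbf w_i$ already share one alphabet (identifying words with distinct alphabets in a nil-variety forces both to be $0$), so the chain is \emph{automatically} confined to the weaker check set of all letter-permutations of the $\mathbf w_i$. No retraction endomorphism is needed there, and the restrictions of the three fully invariant congruences then give the modular law inside $\Con(M)$ exactly as you describe.

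The genuine gap is the other case, which your proposal cannot repair by the means you indicate. When every $\rho$--$\zeta$ chain from $\mathbf p$ to $\mathbf q$ is forced to pass through the zero class $Z=\I(\mathbf V)$, localization to a weaker check set is impossible in principle: condition~(ii$'$) only speaks of sets $M$ with $M\cap\I(\mathbf V)=\varnothing$, so a chain that visits $Z$ can never witness $(\mathbf p,\mathbf q)\in\rho|_M\vee\zeta|_M$, and your retraction $\theta$ --- whose declared purpose is to push intermediate words \emph{into} $Z$ --- drives the chain out of every admissible $M$ rather than into one. (It is also not clear that such a $\theta$ exists for an arbitrary nil-variety, but that is secondary.) This case requires a separate, non-local argument, which is what the paper supplies: using condition~(i), each $\mathbf V$-step of the chain before the first zero word is a mere renaming, so by induction $\mathbf W$ satisfies $\mathbf u\approx\mathbf s$ for a word $\mathbf s$ with $\mathbf s\approx 0$ in $\mathbf V$, and symmetrically $\mathbf v\approx\mathbf t$ in $\mathbf W$ with $\mathbf t\approx 0$ in $\mathbf V$; then $\mathbf s\approx\mathbf t$ holds in $\mathbf U$ (via $\mathbf s\approx\mathbf u\approx\mathbf v\approx\mathbf t$, using $\mathbf U\subseteq\mathbf W$) and in $\mathbf V$ (both sides are $0$), hence in $\mathbf U\vee\mathbf V$, and splicing $\mathbf u\approx\mathbf s$, $\mathbf s\approx\mathbf t$, $\mathbf t\approx\mathbf v$ gives $\mathbf u\approx\mathbf v$ in $(\mathbf U\vee\mathbf V)\wedge\mathbf W$. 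Without an argument of this kind your proof is incomplete.
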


\begin{proof} We are going to verify that $\mathbf V$ is a modular element in the
lattice $\mathbb {SEM}$. Let us fix varieties $\mathbf U, \mathbf W \in \mathbf {SEM}$ with $\mathbf U \subseteq \mathbf W$.
We will show that $$(\mathbf V \vee \mathbf U) \wedge \mathbf W = (\mathbf V \wedge \mathbf W) \vee \mathbf U.$$ It is clear
that $(\mathbf V \wedge \mathbf W) \vee \mathbf U \subseteq (\mathbf V \vee \mathbf U) \wedge \mathbf W$. It remains to verify that $(\mathbf V \vee \mathbf U) \wedge \mathbf W \subseteq (\mathbf V \wedge \mathbf W) \vee \mathbf U$. Let $(\mathbf V \wedge \mathbf W) \vee \mathbf U$ satisfy an identity $\mathbf u \approx \mathbf v$. We have to prove that this identity holds in $(\mathbf V \vee \mathbf U) \wedge \mathbf W$.

It is clear that the identity $\mathbf u \approx \mathbf v$ holds in $\mathbf U$ and $\mathbf V \wedge \mathbf W$. This means that there is a deduction of this identity $$\mathbf u = \mathbf w_0 \approx \mathbf w_1 \approx \dots \approx \mathbf w_k \approx \mathbf w_{k+1}= \mathbf v,$$ where $\mathbf w_i \approx \mathbf w_{i+1}$ holds either in $\mathbf V$ or in $\mathbf W$.

\textbf{Case 1}: $\mathbf u, \mathbf w_1, \mathbf w_2, \dots, \mathbf w_k, \mathbf v \not\in I(\mathbf V \wedge \mathbf W)$. Since $\mathbf V\wedge\mathbf W$ is a nilvariety, this implies $$\alphabet (\mathbf u) =\alphabet (\mathbf w_1)= \dots =\alphabet (\mathbf v).$$ Let us denote the set $\alphabet(\mathbf u)$ by $X$. Put $$M = \{\sigma (\mathbf u), \sigma (\mathbf w_1), \dots, \sigma(\mathbf v) | \sigma \in S(X)\}.$$ It is easy to see that $M$ is a weaker check set. Let $\alpha = \sim_{\mathbf V} \mid _M$, $\beta = \sim_{\mathbf U} \mid _M$, $\gamma = \sim_{\mathbf W} \mid _M$. Obviously $\alpha, \beta, \gamma \in \Con(M)$ and $\gamma \subseteq \beta$. This means that

\begin{align*}
& (\alpha \vee \gamma) \wedge \beta & = \,\,\, & (\alpha \wedge \beta) \vee \gamma & \alpha \,\, \text{is modular}\\
& &  = \,\,\, &  (\sim_V \mid _M \wedge \sim_U \mid _M) \vee \sim_W \mid _M & \\
& & \subseteq \,\,\, & (\sim_V  \wedge \sim_U) \vee \sim_W & \\
& & = \,\,\, &\sim_{(V \vee U) \wedge W} & \\
\end{align*}

Since, $(\mathbf u, \mathbf v) \in (\alpha \vee \gamma) \wedge \beta$ we have that $(\mathbf u, \mathbf v) \in \sim_{(V \vee U) \wedge W}$.

\textbf{Case 2}: there is a positive integer $p$ that $\mathbf u, \mathbf w_1, \mathbf w_2, \dots, \mathbf w_{p-1} \not\in I(\mathbf V \wedge \mathbf W)$ and $\mathbf w_{p}\in I(\mathbf V \wedge \mathbf W)$. This means that $\mathbf V \wedge \mathbf W$ satisfies the identity $\mathbf w_p \approx 0$.  

Let us prove that $\mathbf W$ satisfies the identity $\mathbf u \approx \xi_i (\mathbf w_i)$ for any natural $0 \le i \le p$ and some $\xi_i$ by induction on $i$. Let the identity $\mathbf w_{i-1} \approx \mathbf w_i$ hold in $\mathbf V$. It is clear that $\mathbf V$ does not satisfy the identity $\mathbf w_{i-1} \approx 0$. This means there exists an automorphism $\zeta$ such that $\mathbf w_{i-1} = \zeta(\mathbf w_i)$. Using the induction hypothesis, $\mathbf u \approx \xi_{i-1}(\mathbf w_{i-1}) = \xi_{i-1}\,\cdot \,\zeta (\mathbf w_{i})$. Let us denote $\xi_{p} = \xi_{p-1} \cdot \zeta$  in $\mathbf W$ and $\mathbf u \approx \xi_p (\mathbf w_p)$  in $\mathbf W$. 

Now let the identity $\mathbf w_{i-1} \approx \mathbf w_i$ hold in $\mathbf W$. Using the induction hypothesis, $\mathbf u \approx \xi_{i-1}(\mathbf w_{i-1})$ in $\mathbf W$. This means that $\mathbf W$ satisfies the identities $\xi_{i-1} ^{-1}(\mathbf u) \approx \mathbf w_{i-1} \approx \mathbf w_i$. So, $\mathbf u \approx \xi_{i-1}(\mathbf w_i)$ in $\mathbf W$. 

Denote $\mathbf s = \xi_{p}(\mathbf w_p)$. It is clear that the identity $\mathbf u \approx \mathbf s$ holds in $\mathbf W$ and the identity $\mathbf s \approx 0$ in $\mathbf V$. Analogously, there exists a word $\mathbf t$ such that the identity $\mathbf v \approx \mathbf t$ holds in $\mathbf W$ and the identity $\mathbf t \approx 0$ in $\mathbf V$. It is clear that $\mathbf V$ satisfies the identities $\mathbf s \approx 0 \approx \mathbf t$. Since $\mathbf U \subseteq \mathbf W$, the identities $\mathbf s \approx  \mathbf u$ and $\mathbf v \approx \mathbf t$ hold in $\mathbf U$. Recall that the identity $\mathbf u \approx \mathbf v$ holds in $(\mathbf V \wedge \mathbf W) \vee \mathbf U$. This means that this identity holds in $\mathbf U$. This implies that the identity $\mathbf s \approx \mathbf t$ holds in $\mathbf U \vee \mathbf V$. So, 
\begin{align*}
&\mathbf u \approx \mathbf s & \text{holds in} \,\,\,& \mathbf W,\\
&\mathbf s \approx \mathbf t & \text{holds in}\,\,\, & \mathbf U \vee \mathbf V,\\
& \mathbf t \approx \mathbf v & \text{holds in}\,\,\, & \mathbf W.
\end{align*}

One can conclude that $\mathbf u \approx \mathbf v$ holds in $\mathbf W \wedge (\mathbf U \vee \mathbf V)$.

\end{proof}

\section{Proof of Theorems: G-sets}
\label{G-sets}

A $G$-set $A$ is said
to be \emph{transitive} if, for all $a, b \in A$, there exists $g \in G$ such that $g(a) = b$. A transitive $G$-subset of a $G$-set $A$ is called an \emph{orbit} of $A$. Clearly, any $G$-set is a disjoint union of its orbits. Let $\Orb(A) $ be the set of all orbits of $A$. For simplicity of notation, we will assume the set $\Orb(A) $ to be finite throughout this section. All results can be generalized to the $G$-set with infinitely many orbits without any differences.

Let $\alpha \in \Con(A)$ and $B$ and $C$ be distinct orbits in $A$. We say that $\alpha$ \emph{isolates}
$B$ if $x \in B$ and $x \alpha y$ imply $y \in B$. We say that $\alpha$ \emph{connects} 
orbits $B$ and $C$ of $A$ if there are elements $b \in B$ and $c \in C$ with
$b \alpha c$.  A congruence $\alpha$ is said to be \emph{simple} if it isolates every orbit.  Let $\SCon(A)$ denote the set of all simple congruences of $A$. 

\medskip

\medskip

Let $\omega$ be the equivalence relation of being in the same orbit. It is clear that $\omega$ is a congruence. It is easy to see that the principal ideal of this congruence in the lattice $\Con(A)$ is the set of all simple congruences. We will show that this principal ideal is the direct product of the congruence lattices of the orbits of $A$.

\medskip

\begin{lemma}
\label{SCon is a direct product}
Let $A$ be a $G$-set and let $Orb(A) = \{A_i \mid 1\le i\le n\}$. The rule 
\begin{equation}
\label{Scon isomorphism}
f(\alpha)= (\alpha_1, \alpha_2, \dots, \alpha_n)
\end{equation}
where $\alpha_i$ is the restriction of a congruence $\alpha \in \SCon(A)$ to the orbit $A_i$, defines an isomorphism $f$ between the lattice $\SCon(A)$ and the direct product of lattices $\Con(A_i)$. 
\end{lemma}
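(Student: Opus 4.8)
The plan is to show that $f$ is a well-defined bijection which both preserves and reflects the order, and then to invoke the standard fact that an order isomorphism between lattices is automatically a lattice isomorphism. The conceptual engine behind everything is the observation, already recorded in the text, that a congruence $\alpha \in \Con(A)$ is simple precisely when $\alpha \subseteq \omega$, i.e. when every $\alpha$-related pair lies in a common orbit; this is exactly the statement that $\SCon(A)$ is the principal ideal $(\omega]$, hence a sublattice of $\Con(A)$. Consequently a simple congruence is literally the disjoint union of its orbit-restrictions, $\alpha = \bigsqcup_{i=1}^n \alpha_i$, and it is this decomposition/gluing correspondence that $f$ records.

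First I would check that $f$ is well-defined. Each orbit $A_i$ is a $G$-subset, hence a $G$-set in its own right, and the restriction $\alpha_i = \alpha|_{A_i}$ of an equivalence relation to an invariant subset is again an equivalence relation; it is compatible with the $G$-action because $\alpha$ is and because $g(A_i) = A_i$ for every $g \in G$. Thus $\alpha_i \in \Con(A_i)$ and $f(\alpha) \in \prod_{i=1}^n \Con(A_i)$.

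For surjectivity I would exhibit the inverse gluing map. Given a tuple $(\beta_1, \dots, \beta_n)$ with $\beta_i \in \Con(A_i)$, set $\beta = \bigcup_{i=1}^n \beta_i$, viewing each $\beta_i \subseteq A_i \times A_i \subseteq A \times A$. Since the orbits partition $A$, the relation $\beta$ is reflexive, symmetric, and transitive (transitivity cannot fail, since no pair from distinct orbits is ever related), and it is a $G$-congruence because each $\beta_i$ is and the orbits are $G$-invariant; moreover $\beta \subseteq \omega$, so $\beta \in \SCon(A)$ with $f(\beta) = (\beta_1, \dots, \beta_n)$. Injectivity then follows at once, since two simple congruences that agree on every orbit agree everywhere: each equals the union of its (common) orbit-restrictions.

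It remains to see that $f$ is an order isomorphism. The forward implication $\alpha \subseteq \alpha' \Rightarrow \alpha_i \subseteq \alpha_i'$ is immediate; conversely, if $\alpha_i \subseteq \alpha_i'$ for all $i$, then $\alpha = \bigcup_i \alpha_i \subseteq \bigcup_i \alpha_i' = \alpha'$, again using simplicity of both sides. Hence $f$ and $f^{-1}$ are both order-preserving, so $f$ is an order isomorphism and therefore a lattice isomorphism. (Equivalently, one may verify preservation of the operations directly: meets are intersections and so are obviously respected, while joins are respected because the join of simple congruences remains simple and its restriction to $A_i$ is computed entirely inside $A_i$.) The only point demanding genuine care is the verification that the glued relation $\beta$ is a bona fide $G$-set congruence, which is precisely where $G$-invariance of the orbits enters; everything else reduces to the fact that simple congruences carry no information across orbit boundaries.
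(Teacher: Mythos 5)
Your proof is correct and follows essentially the same route as the paper: both arguments rest on the observation that a simple congruence is the disjoint union of its orbit restrictions, with restriction and gluing furnishing the mutually inverse maps. The only difference is cosmetic --- the paper obtains the lattice-isomorphism part by citing the standard decomposition of the principal ideal $(\omega]$ in $\Part(A)$ as a product of partition lattices of the blocks, whereas you verify directly that $f$ is an order isomorphism, which is a slightly more self-contained way of reaching the same conclusion.
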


\begin{proof}
In the lattice $\Part(A)$, the ideal $(\omega]_{\Part(A)}$ is isomorphic to the direct product of the partition lattices of $\omega$-classes, which are the orbits of $A$. The structure of this isomorphism is given by~(\ref{Scon isomorphism}). Since the lattice $(\omega]_{\Con(A)}$ is a sublattice of $(\omega]_{\Part(A)}$, it is isomorphic to some sublattice of this direct product. It remains to show that it is precisely the direct product of congruence lattices of the orbits. Indeed, it is straightforward to verify that the restriction of a congruence to an orbit is itself a congruence. Furthermore, any set of congruences on the orbits $\{\alpha_i | 1\le i\le n\}$ is the image of $\cup \alpha_i$, which is a congruence on $A$. This implies that $f$ is an isomorhpism. 
\end{proof}

It is well known (see, for example, ~\cite[Lemma 4.20]{McKenzie-McNulty-Taylor-87}) that if $A$
is a transitive G-set, then the lattice $\Con(A)$ is isomorphic to an interval of
the subgroup lattice of G (specifically, $\Con(A) \simeq [\Stab_G(a), G]$, where $a$
is an arbitrary element in $A$, and $\Stab_G(a) =\{g \in G \,\,|\,\, g(a) = a\}$). Obviously, if $\Stab_G(a) = 1$ for every $a$ and $A_i$ is an orbit of $A$ then $\Con(A_i) \simeq \Sub(G)$.  This leads to the following lemma.

\medskip

\begin{lemma} 
\label{SCon-Sub} If $\Stab(x)=1$ for all $x\in A$ then
$$\SCon (A) = \Sub(G) \times  \Sub(G) \times \dots \times \Sub(G) = (\Sub(G)) ^ n. $$ \qed
\end{lemma}

\medskip

Throughout this section, we will always assume the condition~\ref{SCon-Sub} holds.

\medskip

\begin{lemma}
\label{stab in transitive} Let $A$ be a transitive G-set, and suppose $\Stab_G(a)=1$ for every $a \in A$. For any $x, y \in A$, there exists a unique element $g \in G$ such that $x = g(y)$. 
\end{lemma}

\begin{proof}
Assume $g_1(x) = y$ and $g_2(x) = y$ for some $g_1, g_2 \in G$. This implies that  $x = g_2^{-1} (y)$ and $g_1(g_2^{-1}(y)) = y$. Since  $\Stab_G(x)=\Stab_G(y)=1$, we conclude that $g_1 g_2^{-1} = 1$, which means $g_1 = g_2$.
\end{proof}

\medskip

Let $\mathcal T$ be a transversal of $Orb(A)$ (a set containing exactly one element from each orbit of $A$). For any element $x \in A$, let $G(x)$ denote the orbit containing $x$. If the transversal $\mathcal T$ is the set ${x_1, x_2, \dots, x_n}$, then $Orb(A) = {G(x_1), G(x_2), \dots, G(x_n)}$. We say that a congruence $\alpha$ is \emph{coordinated with the transversal} if $x_i \, \alpha \, x_j$ for any points from the transversal such that $\alpha$ connects the orbits $G(x_i)$ and $G(x_j)$.  Let $\Con_{\mathcal T}(A)$ denote the set of all congruences of $A$ coordinated with the transversal $\mathcal T$.

\medskip

\begin{lemma}
\label{Con_T is a lattice} For any G-set A and any transversal $\mathcal T$, the set $\Con_{\mathcal T}(A)$ is a sublattice of $\Con(A)$.
\end{lemma}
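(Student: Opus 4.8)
The plan is to check directly that $\Con_{\mathcal T}(A)$ is nonempty and closed under the meet and join of $\Con(A)$. First I would record two standard facts about congruence lattices of $G$-sets: the meet $\alpha\wedge\beta$ is the intersection $\alpha\cap\beta$, and the join $\alpha\vee\beta$ is the transitive closure of $\alpha\cup\beta$, so that $x\,(\alpha\vee\beta)\,y$ holds if and only if there is a finite chain $x=a_0,a_1,\dots,a_m=y$ in which every consecutive pair $a_k,a_{k+1}$ lies in $\alpha$ or in $\beta$ (this is legitimate because $G$-sets are unary algebras, so the equivalence relation generated by $\alpha\cup\beta$ is automatically compatible with the $G$-action). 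I would also note that the diagonal relation connects no two distinct orbits and is therefore vacuously coordinated, so $\Con_{\mathcal T}(A)$ is nonempty.

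The meet is immediate. Suppose $\alpha\cap\beta$ connects distinct orbits $G(x_i)$ and $G(x_j)$, say $b\,(\alpha\cap\beta)\,c$ with $b\in G(x_i)$ and $c\in G(x_j)$. Then $b\,\alpha\,c$ and $b\,\beta\,c$, so each of $\alpha$ and $\beta$ connects these orbits; coordination of $\alpha$ and of $\beta$ then gives $x_i\,\alpha\,x_j$ and $x_i\,\beta\,x_j$, whence $x_i\,(\alpha\cap\beta)\,x_j$. Thus $\alpha\wedge\beta\in\Con_{\mathcal T}(A)$.

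The hard part will be closure under join, since here a connecting pair must be traced back to the transversal through an entire chain rather than a single link. Suppose $\alpha\vee\beta$ connects $G(x_i)$ and $G(x_j)$, and fix a chain $a_0,\dots,a_m$ as above with $a_0\in G(x_i)$ and $a_m\in G(x_j)$. For each $k$ let $t_k\in\mathcal T$ denote the representative of the orbit $G(a_k)$. The crux is a single lifting observation applied link by link: if $a_k\,\alpha\,a_{k+1}$ and the two points lie in the same orbit, then $t_k=t_{k+1}$ and the link lifts by reflexivity, while if they lie in distinct orbits, then $\alpha$ connects $G(t_k)$ and $G(t_{k+1})$, so coordination of $\alpha$ yields $t_k\,\alpha\,t_{k+1}$; the identical dichotomy holds with $\beta$ in place of $\alpha$. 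Concatenating the lifted links produces a chain $t_0,t_1,\dots,t_m$ witnessing $t_0\,(\alpha\vee\beta)\,t_m$, and since $\mathcal T$ meets each orbit in exactly one point we have $t_0=x_i$ and $t_m=x_j$, giving $x_i\,(\alpha\vee\beta)\,x_j$. Hence $\alpha\vee\beta$ is coordinated, and together with the meet case this shows $\Con_{\mathcal T}(A)$ is a sublattice. The whole argument is purely combinatorial; in particular it does not use the standing triviality of stabilizers from Lemma~\ref{SCon-Sub}.
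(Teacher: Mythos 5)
Your proof is correct and follows essentially the same route as the paper's: the meet case is handled by noting that a connecting pair for $\alpha\cap\beta$ is a connecting pair for each of $\alpha$ and $\beta$, and the join case by lifting a witnessing chain link-by-link to the transversal. Your version is in fact slightly more careful than the paper's, since you explicitly treat the case of consecutive chain elements lying in the same orbit and you note nonemptiness via the diagonal relation.
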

\begin{proof}
Let $\alpha, \beta \in \Con_{\mathcal T}(A)$ and let $\gamma = \alpha \wedge \beta$. Suppose that $\gamma$ connects the orbits $B$ and $C$. Clearly, both $\alpha$ and $\beta$ connect the orbits $B$ and $C$. Let $b \in B$ and $c \in C$ are elements from the transversal $\mathcal T$. It follows that $b \alpha c$ and $b \beta c$, which implies that $b \gamma c$.

Now let $\delta = \alpha \vee \beta$ and suppose that $\delta$ connects the orbits $B$ and $C$. This means there exist elements $b \in B$ and $c \in C$ such that $b \delta c$. Consider elements $e_1, e_2, \dots e_n \in A$ such that $$b=e_1 \,\alpha \, e_2 \, \beta \, e_3 \, \alpha \, \dots \beta \, e_n = c. $$ Since the congruences $\alpha,\beta\in\Con_{\mathcal T}(A)$, we can replace each element $e_i$ with the element $e_i'$ from $\mathcal T \cap G(e_i)$. This gives $$e_1' \,\alpha \, e_2' \, \beta \, e_3' \, \alpha \, \dots \beta \, e_n'. $$ Thus, $\delta$ is coordinated with the transversal $\mathcal T$.
\end{proof}

For any congruence $\alpha$ on $A$, we define a binary relation $\alpha^{*}$ on $Orb(A)$ by the following rule: for $B, C \in Orb(A)$, we write $B \,\alpha^{*}\, C$ if and only if either $B = C$ or $\alpha$ connects $B$ and $C$. 

It is easy to verify that $\alpha^{*}$ is an equivalence relation on $Orb(A)$. Reflexivity and symmetry are obvious. To show transitivity, suppose that $B \,\alpha^{*}\, C \,\alpha^{*}\, D$ for some orbits $B$, $C$, and $D$. We need to prove that $B \,\alpha^{*}\, D$.

By definition, there exist elements $b \in B$, $d \in D$, and $c_1, c_2 \in C$ such that $b \,\alpha\, c_1$ and $c_2 \,\alpha\, d$. Since $c_1$ and $c_2$ lie in the same orbit, there exists $g \in G$ such that $c_1 = g(c_2)$. As $\alpha$ is a congruence, we also have $g(c_2) \,\alpha\, g(d)$.

Therefore,
\[
b \,\alpha\, c_1 = g(c_2) \,\alpha\, g(d),
\]
whence $b \,\alpha\, g(d)$, so $B \,\alpha^{*}\, D$.

Note that, for any transversal $\mathcal T$ and a congruence $\sigma$ coordinated with $\mathcal T$, the equivalence relation $\sigma^{*}$ defines an equivalence relation on $\mathcal T$. We denote this equivalence relation by the same symbol, $\sigma^{*}$.

If $A$ is a G-set and $\alpha \in \Con(A)$ then for each $x \in A$ we define
$$\Stab_{\alpha} (x) = \{g \in G \,\,\, | \,\,\, x \, \alpha \,g(x)\}.$$
It is clear that $\Stab_{\alpha} (x)$ is a subgroup of $G$, and it is called the $\alpha$-\emph{stabilizer} of the element
$x$ in $A$.

Suppose we have a fixed $G$-set $A$ with $n$ orbits $A_1,a_2,\dots,A_n$. A \emph{code} is a sequence $(\sim,H_1,\dots,H_n)$ where $\sim$ is an equivalence relation on $\Orb(A)$ and $H_1,\dots,H_n$ are subgroups of $G$. A code is \emph{proper} if $H_i=H_j$ whenever $A_i\sim A_j$. For a transversal $\mathcal T  = (x_1, x_2, \dots , x_n)$ and a congruence $\sigma\in\Con_{\mathcal T}(A)$, we define $\mathcal T$-\emph{code} of $\sigma$ as the code $$\Code_{\mathcal T}(\sigma)=(\sigma^* \,|\,\,\Stab_{\sigma} (x_1), \, \Stab_{\sigma} (x_2), \, \dots ,\, \Stab_{\sigma} (x_n) ).$$ The set of all proper codes will be denoted as $\mathbb{PC}$.

\medskip

\begin{lemma}
\label{coding congruence} The rule $\sigma\mapsto\Code_{\mathcal T}(\sigma)$ defines a surjective mapping $$\Code_{\mathcal T}\colon\Con_{\mathcal T}\rightarrow\mathbb{PC}$$

\end{lemma}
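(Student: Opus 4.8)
The plan is to prove two things: that the rule $\sigma \mapsto \Code_{\mathcal T}(\sigma)$ actually lands in $\mathbb{PC}$ (well-definedness), and that it hits every proper code (surjectivity). For well-definedness, recall that $\sigma^{*}$ has already been shown to be an equivalence relation on $\Orb(A)$, and that each $\Stab_{\sigma}(x_i)$ is a subgroup of $G$; so it only remains to check properness. Suppose $A_i \,\sigma^{*}\, A_j$ with $i \neq j$. Since $\sigma \in \Con_{\mathcal T}(A)$ is coordinated with $\mathcal T$ and connects $A_i$ and $A_j$, we have $x_i \,\sigma\, x_j$. For any $g \in \Stab_{\sigma}(x_i)$ we have $x_i \,\sigma\, g(x_i)$; applying $g$ to the relation $x_i \,\sigma\, x_j$ gives $g(x_i) \,\sigma\, g(x_j)$, and transitivity yields $x_j \,\sigma\, x_i \,\sigma\, g(x_i) \,\sigma\, g(x_j)$, so $g \in \Stab_{\sigma}(x_j)$. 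The reverse inclusion is symmetric, hence $\Stab_{\sigma}(x_i) = \Stab_{\sigma}(x_j)$ and the code is proper.

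For surjectivity, I would construct, from an arbitrary proper code $(\sim, H_1, \dots, H_n)$, a congruence $\sigma$ realizing it. Under the standing assumption $\Stab_G(x) = 1$, Lemma~\ref{stab in transitive} lets me write every element of the orbit $A_i$ uniquely as $g(x_i)$ for some $g \in G$. Whenever $A_i \sim A_j$, properness supplies a common subgroup $H := H_i = H_j$, and I define $g(x_i) \,\sigma\, h(x_j)$ if and only if $A_i \sim A_j$ and $gH = hH$, declaring elements of $\sim$-unrelated orbits to be $\sigma$-unrelated. Reflexivity and symmetry are immediate; transitivity follows from $\sim$ being an equivalence relation on $\Orb(A)$ together with the constancy of $H$ across each $\sim$-class; and $G$-invariance follows because $gH = hH$ forces $(tg)H = (th)H$ for every $t \in G$. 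Thus $\sigma$ is a well-defined congruence.

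It then remains to verify $\Code_{\mathcal T}(\sigma) = (\sim, H_1, \dots, H_n)$ together with $\sigma \in \Con_{\mathcal T}(A)$. Taking $g = h = 1$ shows $x_i \,\sigma\, x_j$ whenever $A_i \sim A_j$, so $\sigma$ connects exactly the $\sim$-related orbits; hence $\sigma^{*} = \sim$ and $\sigma$ is coordinated with $\mathcal T$. Finally $\Stab_{\sigma}(x_i) = \{\, g : x_i \,\sigma\, g(x_i)\,\} = \{\, g : H_i = g H_i\,\} = H_i$, as required.

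The main obstacle is the construction in the surjectivity step. It is precisely the trivial-stabilizer hypothesis that makes the representation $g(x_i)$ unambiguous, and hence makes the coset condition $gH = hH$ a well-defined, $G$-invariant equivalence rather than merely a relation on representatives; transitivity across distinct orbits inside one $\sim$-class must be handled carefully, as it relies both on transitivity of $\sim$ and on the equality of all the subgroups attached to that class. Once these points are secured, matching the resulting code against the given one is routine.
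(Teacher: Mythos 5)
Your proof is correct, and while the first half (properness of the code) coincides with the paper's argument, your surjectivity argument takes a genuinely different route. The paper defines $\sigma$ as the congruence \emph{generated} by the pairs $(x_i,x_j)$ with $A_i\mathrel{\pi}A_j$ and the pairs $(x_i,g(x_i))$ with $g\in H_i$, and must then show that generation does not accidentally enlarge the stabilizers; this requires an induction along chains of elementary deduction steps, splitting into cases according to which generating pair was applied. You instead write the congruence down explicitly: using the standing trivial-stabilizer assumption (via Lemma~\ref{stab in transitive}) every element of $A_i$ is uniquely $g(x_i)$, so the coset rule $g(x_i)\mathrel{\sigma}h(x_j)\iff A_i\sim A_j$ and $gH=hH$ (with $H=H_i=H_j$ by properness) is unambiguous, and reflexivity, symmetry, transitivity, $G$-invariance, $\sigma^{*}=\,\sim$, coordination with $\mathcal T$, and $\Stab_{\sigma}(x_i)=H_i$ are all direct computations. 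What your approach buys is the complete elimination of the paper's induction: the delicate point --- that the stabilizer is not larger than $H_i$ --- is built into the definition rather than verified after the fact. What it costs is that it leans more heavily on the trivial-stabilizer hypothesis and on properness of the code (both needed to make the cross-orbit coset condition well defined and transitive), whereas the paper's generated-congruence construction is the more canonical object; you correctly flag both dependencies. Both proofs are valid within the section's standing assumptions.
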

\begin{proof} First, we prove that $\Code_{\mathcal T}(\sigma)\in\mathbb{PC}$ for any $\sigma\in\Con_{\mathcal T}$. Let $x$ and $y$ be points from the transversal $\mathcal{T}$, and let $X$ and $Y$ be orbits such that $x \in X$ and $y \in Y$. We aim to show that if $X \,\sigma^{*}\, Y$, then $\Stab_{\sigma}(x) = \Stab_{\sigma}(y)$.

By symmetry, it suffices to prove that $\Stab_{\sigma}(x) \subseteq \Stab_{\sigma}(y)$. Consider any element $g \in \Stab_{\sigma}(x)$. By definition, this means $g(x) \,\sigma\, x$.

Since $\sigma$ is coordinated with the transversal $\mathcal{T}$ and $\sigma^{*}$ connects the orbits $X$ and $Y$, it follows that $x \,\sigma\, y$. Moreover, as $\sigma$ is a congruence, we have $g(x) \,\sigma\, g(y)$.

Hence,
\[
y \,\sigma\, x \,\sigma\, g(x) \,\sigma\, g(y),
\]
which implies $y \,\sigma\, g(y)$, and therefore $g \in \Stab_{\sigma}(y)$. 

Now we will prove that $\Code_{\mathcal T}$ is surjective. We will construct a congruence $\sigma$ corresponding to a given code
\[
(\pi \mid H_1,\, H_2,\, \dots,\, H_n)\in\mathbb{PC},
\]
such that the code
\[
(\pi \mid H_1,\, H_2,\, \dots,\, H_n)
\]
coincides with the code
\[
(\sigma^{*} \mid \Stab_{\sigma}(x_1),\, \Stab_{\sigma}(x_2),\, \dots,\, \Stab_{\sigma}(x_n)).
\]

Consider all pairs of points $(x_i,x_j)$ from the transversal $\mathcal{T} = (x_1, x_2, \dots, x_n)$ such that $X_i \,\pi\, X_j$, where $X_i$ and $X_j$ are the orbits containing $x_i$ and $x_j$, respectively. Additionally, for each $x_i \in \mathcal{T}$ and each $g \in H_i$, consider the pair $(x_i, g(x_i))$. The congruence $\sigma$ is defined as the congruence generated by all such pairs.

First, we show that $\pi = \sigma^{*}$. Consider the equivalence relation $\delta$ on $A$ defined as follows: $x \,\, \delta \,\, y$ if and only if $G(x) \,\, \pi \,\, G(y)$. It is clear that $\delta$ is a congruence. Since all pairs from the generating set of $\sigma$ are in $\delta$, it follows that $\sigma \subseteq \delta$. Moreover, $\delta$ does not connect orbits not related by $\pi$. Consequently,  $\sigma$ does not connect orbits not related by $\pi$, and  thus $\sigma^{*} = \pi$. 

Now we show that for each point $x$ from the transversal $\mathcal T$, we have that $\Stab_{\sigma}(x) = H_x$. By construction of $\sigma$, $H_x \subseteq\Stab_{\sigma}(x)$. We need to prove the reverse inclusion. By definition, if $g \in \Stab_{\sigma}(x)$, then $x \,\, \sigma \,\, g(x)$. There exists a number of points $y_1, y_2, \dots, y_k \in A$ such that $y_1 = x$, $y_k = g(x)$ and each pair $(y_i, y_{i+1})$ is obtained by the action of some element $h$ on a pair from the generating set of pairs. Let $z_1, z_2, \dots, z_k \in \mathcal T$ where $z_i$ and $y_i$ lie in the same orbit. For each $y_i$ there exists $g_i$ such that $y_i = g_i (z_i)$. Now we show that $g_i \in H_x$ for all $i$. We proceed by induction on $i$. 
\begin{description}
\item[Base case]{The statement is evident for $i = 1$.}
\item[Inductive step]{Suppose $g_i \in H_x$.  We need to show that $g_{i+1} \in H_x$. Consider the pair of elements $y_i$ and $y_{i+1}$. It is known that $y_i = h(s)$ and $y_{i+1} = h (t)$, where $(s, t)$ is one of the pairs from generating set.
\begin{description}
\item[Case 1] {If $s,t \in \mathcal T$, then $s = z_i$ and $t = z_{i+1}$. By assumption, $y_{i+1} = h(z_{i+1})$. But on the other hand $y_{i+1} = g_{i+1}(z_{i+1})$. Hence, $z_{i+1} =h^{-1}\cdot g_{i+1}(z_{i+1})$. Since $\Stab (z_{i+1}) =1$, it follows that $g_{i+1} = h \in H_x$.}

\item[Case 2] {If $s = g(t)$, we may assume without loss of generality that $(y_i, y_{i+1}) = (h (z_i), h (g(z_i)))$. Since $y_i$ and $y_{i+1}$ are in the same orbit, we have $z_i = z_{i+1}$ and $g_i = h$ and $g_{i+1} = h \cdot g$. By the induction hypothesis, $h \in H_x$, and by definition, $g \in H_x$. Therefore, $g_{i+1} \in H_x$. }
\end{description}}
\end{description}

An analogous argument applies in the case where $(y_i, y_{i+1}) = (h(g(z_i)), h(z_i))$.

Thus, the proof is complete. 
\end{proof}

The set $\mathbb{PC}$ can be considered as a partially ordered set. Namely, we put

$$ (\pi_1\,|\,\,H_{1}, \, H_{2}, \, \dots ,\, H_{n}) \le (\pi_2\,|\,\,P_{1}, \, P_{2}, \, \dots ,\, P_{n} )$$ if and only if $\pi_1 \subseteq \pi_2$ and $H_{i} \le P_{i}$ for every $1 \le i \le n$.

\medskip

\begin{lemma}
\label{order on codes} The mapping $\Code_{\mathcal T}$ is an isomorphism of partially ordered sets $\Con_{\mathcal T}$ and $\mathbb{CR}$.
\end{lemma}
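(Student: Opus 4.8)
The plan is to establish the stronger biconditional
\[
\sigma \subseteq \tau \iff \Code_{\mathcal T}(\sigma) \le \Code_{\mathcal T}(\tau)
\]
for all $\sigma,\tau\in\Con_{\mathcal T}$, where on the left we use containment of congruences (as relations on $A$) and on the right the order defined on codes. This single biconditional does all the work: surjectivity of $\Code_{\mathcal T}$ onto the poset $\mathbb{PC}$ of proper codes (the codomain of $\Code_{\mathcal T}$) is already provided by Lemma~\ref{coding congruence}, and injectivity follows from the biconditional by antisymmetry --- if the two codes coincide, then each is $\le$ the other, forcing $\sigma\subseteq\tau\subseteq\sigma$. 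Thus $\Code_{\mathcal T}$ becomes an order-isomorphism.

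I would dispatch the forward implication first, as it is routine. Assume $\sigma\subseteq\tau$. If $\sigma$ connects two orbits $B$ and $C$ via $b\,\sigma\,c$, then $b\,\tau\,c$, so $\tau$ connects them as well; hence $\sigma^{*}\subseteq\tau^{*}$. Similarly, $g\in\Stab_\sigma(x_i)$ means $x_i\,\sigma\,g(x_i)$, whence $x_i\,\tau\,g(x_i)$ and $g\in\Stab_\tau(x_i)$. Together these give $\Code_{\mathcal T}(\sigma)\le\Code_{\mathcal T}(\tau)$.

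The substance of the lemma lies in the reverse implication, which is where I expect the main obstacle. Assume $\sigma^{*}\subseteq\tau^{*}$ and $\Stab_\sigma(x_i)\subseteq\Stab_\tau(x_i)$ for every $i$, and take an arbitrary $(a,b)\in\sigma$; the goal is $(a,b)\in\tau$. Using Lemma~\ref{stab in transitive} (available under the standing assumption that all stabilizers are trivial), write $a=g(x_i)$ and $b=h(x_j)$ with $x_i,x_j\in\mathcal T$ and unique $g,h\in G$. If $a$ and $b$ lie in the same orbit ($i=j$), then applying $g^{-1}$ to $a\,\sigma\,b$ gives $x_i\,\sigma\,g^{-1}h(x_i)$, so $g^{-1}h\in\Stab_\sigma(x_i)\subseteq\Stab_\tau(x_i)$, and applying $g$ back recovers $a\,\tau\,b$. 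The delicate case is $i\neq j$: here $\sigma$ connects $A_i$ and $A_j$, so coordination of $\sigma$ yields $x_i\,\sigma\,x_j$, while $\sigma^{*}\subseteq\tau^{*}$ together with coordination of $\tau$ yields $x_i\,\tau\,x_j$. The idea is to transport the pair into a single orbit: from $x_i\,\sigma\,x_j$ one gets $g(x_i)\,\sigma\,g(x_j)$, and combining this with $g(x_i)\,\sigma\,h(x_j)$ gives $g(x_j)\,\sigma\,h(x_j)$ inside $A_j$; the same-orbit argument then upgrades this to $g(x_j)\,\tau\,h(x_j)$. Chaining with $g(x_i)\,\tau\,g(x_j)$, obtained by applying $g$ to $x_i\,\tau\,x_j$, produces $a=g(x_i)\,\tau\,h(x_j)=b$.

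With both implications in hand, $\Code_{\mathcal T}$ is an order-embedding, and by Lemma~\ref{coding congruence} it is onto, so it is the desired isomorphism of partially ordered sets. The only genuine care required is the bookkeeping with the $G$-action in the mixed-orbit case, which rests solely on the facts that $G$ acts compatibly with congruences and that stabilizers are trivial; no new idea beyond the interplay of coordination with the stabilizer inclusions is needed.
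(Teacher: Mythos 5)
Your proposal is correct and follows essentially the same route as the paper: surjectivity is imported from Lemma~\ref{coding congruence}, and the order-embedding biconditional is proved by reducing an arbitrary $\sigma$-related pair to transversal points via coordination and then transferring the resulting stabilizer element through the inclusion $\Stab_\sigma(x_i)\subseteq\Stab_\tau(x_i)$. The only cosmetic difference is that you split the same-orbit and mixed-orbit cases explicitly, whereas the paper treats them in one computation.
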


\begin{proof} We suppose $\mathcal T=\{x_1,\dots,x_n\}$. The mapping $\Code_{\mathcal T}$ is surjective by Lemma~\ref{coding congruence}. It remains to prove that, for any congruences $\sigma_1$ and $\sigma_2$ coordinated with $\mathcal T$, $\sigma_1 \subseteq \sigma_2$ if and only if $\Code_{\mathcal T}(\sigma_1)\le\Code_{\mathcal T}(\sigma_2)$. In particulat, this will imply that $\Code_{\mathcal T}$ is injective. We put $H_i=\Stab_{\sigma_1}(x_i)$ and $P_i=\Stab_{\sigma_2}(x_i)$.

\emph{Necessity}. Suppose $\sigma_1 \subseteq \sigma_2$. It is clear that $\sigma_1^{*} \subseteq \sigma_2^{*}$. Now, let $g \in H_i$ for some $1 \le i \le n$. We know that $x_i \, \sigma_1 \, g(x_i)$. Since $\sigma_1 \subseteq \sigma_2$, we also have $x_i \, \sigma_2 \, g(x_i)$, which implies $g \in P_i$. Thus, $H_i \le P_i$.

\emph{Sufficiency}. Suppose $\sigma_1^{*} \subseteq \sigma_2^{*}$ and $H_i \le P_i$ for every $1 \le i \le n$. We will show that $y_1 \, \sigma_1 \,y_2$ implies $y_1 \, \sigma_2 \, y_1$. Without loss of generality, assume that $x_1 \in G(y_1)$ and $x_2 \in G(y_2)$. This means that there exist $g_1, g_2 \in G$ such that $g_1(x_1) = y_1$ and $g_2(x_2) = y_2$. Since $\sigma_1$ is coordinated with $\mathcal T$ and $y_1 \sigma_1 y_2$, we have $x_1 \sigma_1 x_2$. Since $\sigma_1$ is a congruence, we also have $g_1(x_1) \sigma_1 g_1 (x_2)$. Hence, 

$$g_2(x_2) = y_2 \,\, \sigma_1 \,\, y_1 = g_1(x_1) \,\, \sigma_1 \,\, g_1 (x_2).$$

We have $g_1^{-1} \cdot g_2 (x_2) \,\, \sigma_1 \,\, x_2$. This implies $g_1^{-1} \cdot g_2\in H_2$. But we know that $H_2 \le P_2$. Hence, $g_1^{-1} \cdot g_2 (x_2) \,\, \sigma_2 \,\, x_2$. Thus, $g_1(x_2) \sigma_2 g_2(x_2)$. Since $\sigma_1^{*} \subseteq \sigma_2^{*}$, we have $x_1 \sigma_2 x_2$.  Since $\sigma_2$ is a congruence, it implies that $g_1(x_1) \sigma_2 g_1 (x_2)$. Therefore,

$$y_1 = g_1(x_1) \,\, \sigma_2 \,\, g_1(x_2) \,\, \sigma_2 \,\, g_2(x_2) = y_2.$$ 
\end{proof}

Lemma~\ref{order on codes} immediately implies

\medskip

\begin{corollary}
\label{PC is a lattice} The partially ordered set $\mathbb{PC}$ is a lattice. \qed
\end{corollary}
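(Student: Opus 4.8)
The plan is to deduce the statement purely formally from the two preceding lemmas, since all the genuine content has already been established there. First I would recall that, by Lemma~\ref{Con_T is a lattice}, the set $\Con_{\mathcal T}(A)$ is a sublattice of $\Con(A)$ and is therefore a lattice in its own right. Next I would invoke Lemma~\ref{order on codes}, which asserts that $\Code_{\mathcal T}$ is an isomorphism of the partially ordered sets $\Con_{\mathcal T}(A)$ and $\mathbb{PC}$.

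The only general fact needed is that a poset which is order-isomorphic to a lattice is itself a lattice. Concretely, if $f\colon L\to P$ is an order isomorphism of posets and $L$ is a lattice, then for any $p,q\in P$ the elements $f\bigl(f^{-1}(p)\vee f^{-1}(q)\bigr)$ and $f\bigl(f^{-1}(p)\wedge f^{-1}(q)\bigr)$ serve as the join and meet of $p$ and $q$ in $P$; this holds because an order isomorphism carries least upper bounds to least upper bounds and greatest lower bounds to greatest lower bounds. Applying this observation with $L=\Con_{\mathcal T}(A)$, $P=\mathbb{PC}$, and $f=\Code_{\mathcal T}$ immediately yields that $\mathbb{PC}$ is a lattice. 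There is essentially no obstacle to overcome: the substantive work lies in the construction of $\Code_{\mathcal T}$ and the order-theoretic analysis carried out in Lemma~\ref{order on codes}, and the present corollary is merely a formal consequence of that isomorphism together with Lemma~\ref{Con_T is a lattice}.
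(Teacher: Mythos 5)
Your proposal is correct and matches the paper's approach: the paper derives the corollary as an immediate consequence of Lemma~\ref{order on codes} (the order isomorphism $\Code_{\mathcal T}$) together with the fact that $\Con_{\mathcal T}(A)$ is a lattice by Lemma~\ref{Con_T is a lattice}. You merely spell out the routine transport of joins and meets along an order isomorphism, which the paper leaves implicit.
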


The following lemma directly describes lattice operations on $\mathbb{CR}$.

\medskip

\begin{lemma}
\label{meet and join of codes} Let $C_1=(\pi_1\,|\,\,H_{1}, \, H_{2}, \, \dots ,\, H_{n})$ and $C_2=(\pi_2\,|\,\,P_{1}, \, P_{2}, \, \dots ,\, P_{n})$ be two proper codes. The codes of $C_1 \wedge C_2$ and $C_1\vee C_2$ can be calculated by the rules
$$C_1\wedge C_2=(\pi_1\wedge \pi_2\,|\,\, H_{1} \wedge P_{1}, H_{2} \wedge P_{2} \, \dots ,\, H_{n} \wedge P_{n})$$ 
and
$$C_1\vee C_2=(\pi_1\vee \pi_2\,|\,\,K_{1}, \, K_{2}, \, \dots ,\, K_{n})$$
where
$$K_{i} = \left(\bigvee_{j\colon(A_i,A_j) \in \pi_1\vee\pi_2}{H_j}\right) \,\, \vee \,\, \left(\bigvee_{j\colon(A_i,A_j) \in \pi_1\vee\pi_2}{P_j}\right).$$
\end{lemma}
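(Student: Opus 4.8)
The plan is to transport the computation from $\mathbb{PC}$ back to the congruence lattice, where the operations are concrete. By Lemma~\ref{order on codes} the map $\Code_{\mathcal T}$ is an order isomorphism, and by Corollary~\ref{PC is a lattice} the poset $\mathbb{PC}$ is a lattice; hence the meet and join of $C_1$ and $C_2$ are precisely the $\Code_{\mathcal T}$-images of the meet and join of congruences $\sigma_1,\sigma_2\in\Con_{\mathcal T}(A)$ with $\Code_{\mathcal T}(\sigma_1)=C_1$ and $\Code_{\mathcal T}(\sigma_2)=C_2$ (such preimages exist by the surjectivity in Lemma~\ref{coding congruence}). Moreover, Lemma~\ref{Con_T is a lattice} guarantees that $\Con_{\mathcal T}(A)$ is a sublattice of $\Con(A)$, so $\sigma_1\wedge\sigma_2=\sigma_1\cap\sigma_2$ and $\sigma_1\vee\sigma_2$ is the transitive closure of $\sigma_1\cup\sigma_2$, both again coordinated with $\mathcal T$. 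It therefore suffices to compute the two invariants $(\,\cdot\,)^{*}$ and $\Stab_{(\cdot)}(x_i)$ for each of these two congruences and read off the resulting codes.

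The meet is routine. For the orbit part, two orbits are connected by $\sigma_1\cap\sigma_2$ if and only if they are connected by both $\sigma_1$ and $\sigma_2$; using coordination exactly as in the proof of Lemma~\ref{Con_T is a lattice} (the connection is witnessed on transversal points), one obtains $(\sigma_1\cap\sigma_2)^{*}=\pi_1\wedge\pi_2$. For the stabilizers, directly from the definition $\Stab_{\sigma_1\cap\sigma_2}(x_i)=\{g\mid x_i\,\sigma_1\,g(x_i)\ \text{and}\ x_i\,\sigma_2\,g(x_i)\}=H_i\cap P_i=H_i\wedge P_i$. This already yields the stated formula for $C_1\wedge C_2$.

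The join is where the real work lies. Its orbit part is still easy: a connecting $\sigma_1\cup\sigma_2$-chain between orbits projects to an alternating $\pi_1$--$\pi_2$ chain and conversely, so $(\sigma_1\vee\sigma_2)^{*}=\pi_1\vee\pi_2$. Writing $\sigma=\sigma_1\vee\sigma_2$, the content is the identity $\Stab_{\sigma}(x_i)=K_i$. The inclusion $K_i\subseteq\Stab_{\sigma}(x_i)$ is the easy half: if $A_j$ lies in the $(\pi_1\vee\pi_2)$-class of $A_i$ and $g\in H_j$, then $x_i\,\sigma\,x_j$ by coordination, $x_j\,\sigma\,g(x_j)$ by hypothesis, and $g(x_j)\,\sigma\,g(x_i)$ by applying $g$ to $x_i\,\sigma\,x_j$; chaining these gives $x_i\,\sigma\,g(x_i)$, so $H_j\subseteq\Stab_{\sigma}(x_i)$, and symmetrically $P_j\subseteq\Stab_{\sigma}(x_i)$. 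Since $\Stab_{\sigma}(x_i)$ is a subgroup, it contains the join $K_i$.

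The reverse inclusion $\Stab_{\sigma}(x_i)\subseteq K_i$ is the \textbf{main obstacle}. I would take $g\in\Stab_{\sigma}(x_i)$ and fix a connecting chain $x_i=y_0,y_1,\dots,y_m=g(x_i)$ with each consecutive pair in $\sigma_1\cup\sigma_2$; every $y_l$ then lies in an orbit of the $(\pi_1\vee\pi_2)$-class of $A_i$. Invoking the standing hypothesis $\Stab_G(x)=1$ and Lemma~\ref{stab in transitive}, I write each $y_l=h_l(z_l)$ uniquely, where $z_l\in\mathcal T$ is the transversal point of the orbit of $y_l$. The crux is to analyse a single step $(y_l,y_{l+1})\in\sigma_k$, with $k\in\{1,2\}$, and show that $h_l^{-1}h_{l+1}$ lies in the relevant $H$- or $P$-subgroup: one splits into the case $z_l=z_{l+1}$ (same orbit, where applying $h_l^{-1}$ gives $h_l^{-1}h_{l+1}\in\Stab_{\sigma_k}(z_l)$) and the case $z_l\neq z_{l+1}$ (distinct orbits, where coordination yields $z_l\,\sigma_k\,z_{l+1}$ and again forces $h_l^{-1}h_{l+1}$ into the $\sigma_k$-stabilizer of a transversal point of the class). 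Because the code is proper, these stabilizers are exactly the $H_j$ (for $k=1$) or $P_j$ (for $k=2$) attached to orbits $A_j$ of the class. Since $h_0=1$ and $h_m=g$, the telescoping product $g=(h_0^{-1}h_1)(h_1^{-1}h_2)\cdots(h_{m-1}^{-1}h_m)$ expresses $g$ as a product of elements drawn from those $H_j$ and $P_j$, whence $g\in K_i$. Combining the two inclusions establishes $\Stab_{\sigma}(x_i)=K_i$ and completes the join formula.
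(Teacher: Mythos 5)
Your proof is correct, but it takes a genuinely different route from the paper. The paper stays entirely inside the code lattice $\mathbb{PC}$: writing $L_i$ for the right-hand side of the formula for $K_i$, it observes that $(\pi_1\vee\pi_2\mid L_1,\dots,L_n)$ is itself a proper code lying above both $C_1$ and $C_2$ (so $C_1\vee C_2\le C$, giving $K_i\le L_i$), while properness of $C_1\vee C_2$ forces $K_i=K_j$ across each $(\pi_1\vee\pi_2)$-class and hence $L_i\le K_i$; the meet formula is dismissed as the componentwise greatest lower bound, which is visibly a proper code. You instead transport everything back to $\Con_{\mathcal T}(A)$ via Lemmas~\ref{coding congruence}, \ref{order on codes} and~\ref{Con_T is a lattice}, compute $\sigma_1\cap\sigma_2$ and the transitive closure of $\sigma_1\cup\sigma_2$ directly, and establish $\Stab_{\sigma_1\vee\sigma_2}(x_i)=K_i$ by decomposing a connecting chain $x_i=y_0,\dots,y_m=g(x_i)$ into unique representatives $y_l=h_l(z_l)$ and telescoping $g=(h_0^{-1}h_1)\cdots(h_{m-1}^{-1}h_m)$ with each factor in some $H_j$ or $P_j$. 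This chain analysis is essentially a reprise of the surjectivity argument inside the proof of Lemma~\ref{coding congruence}, so it duplicates work that the paper's abstract argument avoids; on the other hand, your version makes explicit \emph{why} the join of stabilizers over the whole $(\pi_1\vee\pi_2)$-class appears (each step of the chain may pass through a different orbit of the class), which the paper's two-line order-theoretic proof leaves implicit. Both arguments are sound; yours is longer but more self-contained at the level of congruences.
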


\begin{proof} We denote the $i$-th subgroup in $C_1\vee C_2$ by $K_i$ and the subgroup
$$\left(\bigvee_{j\colon(A_i,A_j) \in \pi_1\vee\pi_2}{H_j}\right) \,\, \vee \,\, \left(\bigvee_{j\colon(A_i,A_j) \in \pi_1\vee\pi_2}{P_j}\right)$$
by $L_i$. The equality $K_i=L_i$ is the only non-trivial statement of the lemma. Since the code $C_1\vee C_2$ is proper, $(A_i,A_j) \in \pi_1\vee\pi_2$ implies $K_i=K_j$. Since $C_1,C_2\le C_1\vee C_2$, we have $H_j,P_j\le K_j$. Hence, $L_j\le K_j$. Furthermore, it is easy to verify that $C=(\pi_1\vee\pi_2\,|\,\,L_{1}, \, L_{2}, \, \dots ,\, L_{n})$ is a proper code and $C_1,C_2\le C$. Hence $C_1\vee C_2\le C$ and $K_j\le L_j$.
\end{proof}

\medskip

\begin{lemma}
\label{stabilizers are conjugated} For any congruence $\sigma$, $\sigma$-stabilizers of points in the same orbit are conjugated.
\end{lemma}
\begin{proof}
Let $x$ and $y$ be different points from one orbit. Let $H_x$ and $H_y$ be their $\sigma$-stabilizers. Take an element $h\in H_x$. There exists $g \in G$ such that $x = g(y)$. It is clear that $g(y)\,\sigma\,hg(y)$. So, $y \,\, \sigma g^{-1}hg(y)$. This means that $g^{-1}hg \in H_y$. Hence $g^{-1}H_xg \le H_y$. Analogously, we can show that $gH_yg^{-1}\le H_x$, whence $H_y\le g^{-1}H_xg$.
\end{proof}

\section{Proof of Theorems}
\label{Proof of Theorems}

\begin{lemma}
\label{M3} Let $X$ be a $G$-set such that $g(x)\neq x$ for any $x\in X$ and any non-identity $g\in G$. Suppose the lattice $\Sub(G)$ contains a sublattice $\{I,H_1,H_2,H_3,J\}\sim M_3$ with zero $I$ and unit $J$. Suppose $\alpha\in\Con(X)$ is a simple congruence such that $\Stab_{\alpha}(x_1)=H_1$ and $\Stab_{\alpha}(x_2)=H_2$ for some points $x_1,x_2\in X$ from two different orbits. Then $\alpha$ is not a modular element of the lattice $\Con(X)$.
\end{lemma}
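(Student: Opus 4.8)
The plan is to transport the whole problem into the lattice of proper codes and there exhibit an explicit copy of $N_5$ having $\alpha$ as its central element. First I would fix a transversal $\mathcal T=(x_1,x_2,\dots,x_n)$ of the orbits of $X$ beginning with the two given points, so that $x_1\in A_1:=G(x_1)$ and $x_2\in A_2:=G(x_2)$. Since $\alpha$ is simple it isolates every orbit, hence $\alpha^{*}$ is the identity relation on $\Orb(X)$ and the coordination requirement for $\mathcal T$ is vacuous; thus $\alpha\in\Con_{\mathcal T}(X)$, with code $(\pi_\Delta\mid H_1,H_2,S_3,\dots,S_n)$, where $\pi_\Delta$ is the partition isolating all orbits and $S_i=\Stab_\alpha(x_i)$. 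By Lemma~\ref{Con_T is a lattice}, $\Con_{\mathcal T}(X)$ is a \emph{sublattice} of $\Con(X)$, and by Lemma~\ref{order on codes} together with Corollary~\ref{PC is a lattice} the map $\Code_{\mathcal T}$ is a lattice isomorphism of $\Con_{\mathcal T}(X)$ onto $\mathbb{PC}$. Consequently it suffices to refute the modular law for $\alpha$ \emph{inside} $\Con_{\mathcal T}(X)$: any failure there persists in $\Con(X)$, because joins and meets in a sublattice agree with those in the ambient lattice.

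The place where non-modularity can arise is the \emph{connection} of the two orbits: within a single coordinate $\Sub(G)$ the copy of $M_3$ is a modular sublattice, so nothing can go wrong component-wise. I would therefore let $\pi_{12}$ denote the partition whose only non-singleton block is $\{A_1,A_2\}$ and define two coordinated congruences by prescribing their codes, keeping all coordinates $i\ge 3$ equal to those of $\alpha$ (where they remain inert, since no partition below connects $A_i$ to anything):
\[
\Code_{\mathcal T}(\beta)=(\pi_{12}\mid I,I,S_3,\dots,S_n),\qquad
\Code_{\mathcal T}(\gamma)=(\pi_{12}\mid H_3,H_3,S_3,\dots,S_n).
\]
Both are proper codes (the equal stabilizers $I=I$ and $H_3=H_3$ on the connected block secure properness), and $\beta\le\gamma$ because $I\le H_3$. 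Using the explicit formulas of Lemma~\ref{meet and join of codes} and the $M_3$-relations $H_i\wedge H_j=I$, $H_i\vee H_j=J$ for $i\neq j$, I would compute on the first two coordinates that $\alpha\vee\beta=\alpha\vee\gamma=(\pi_{12}\mid J,J,\dots)$ — connecting the orbits forces the common stabilizer up to $H_1\vee H_2=J$ — and that $\alpha\wedge\beta=\alpha\wedge\gamma=(\pi_\Delta\mid I,I,\dots)$, since meeting with a connected congruence refines the partition back to $\pi_\Delta$ and drops the stabilizers to $H_1\wedge H_3=H_2\wedge H_3=I$.

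Finally, the modular law fails at $\alpha$ for the comparable pair $\beta\le\gamma$, since
\[
(\alpha\vee\beta)\wedge\gamma=(\pi_{12}\mid J,J,\dots)\wedge\gamma=\gamma,\qquad
(\alpha\wedge\gamma)\vee\beta=(\pi_\Delta\mid I,I,\dots)\vee\beta=\beta,
\]
whereas $\gamma\neq\beta$ because $H_3\neq I$. Thus $\alpha$ is the central element of a copy of $N_5$ and is not modular in $\Con_{\mathcal T}(X)$, hence not modular in $\Con(X)$. I expect the real work to be bookkeeping rather than difficulty: checking that $\beta$ and $\gamma$ are genuine congruences (via the surjectivity of $\Code_{\mathcal T}$ in Lemma~\ref{coding congruence}, which relies on the hypothesis $\Stab_G(x)=1$), that the tail coordinates $i\ge 3$ genuinely leave every join and meet unaffected, and that the four code computations above are correct. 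The conceptual heart — and the single choice that must be made correctly — is to take the \emph{third} atom $H_3$ as the stabilizer of $\gamma$: it is incomparable to both $H_1$ and $H_2$, yet joins with each to $J$ and meets each in $I$, which is precisely what drives the two sides of the modular law apart once the orbits are glued together.
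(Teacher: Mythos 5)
Your proposal is correct and follows essentially the same route as the paper: both fix the same transversal, define $\beta$ and $\gamma$ by exactly the codes $(\varepsilon\mid I,I,K_3,\dots,K_n)$ and $(\varepsilon\mid H_3,H_3,K_3,\dots,K_n)$, and use the code calculus of Lemmas~\ref{coding congruence}--\ref{meet and join of codes} to exhibit the failure of modularity at $\alpha$. The only cosmetic difference is that the paper records the containments $\alpha\wedge\gamma\subseteq\beta$ and $\gamma\subseteq\alpha\vee\beta$ to display the pentagon $N_5$ explicitly, whereas you compute $(\alpha\vee\beta)\wedge\gamma=\gamma\neq\beta=(\alpha\wedge\gamma)\vee\beta$ directly.
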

\begin{proof} Let $\Orb(X)=\{A_1,A_2,\dots,A_n\}$ and $x_1\in A_1$, $x_2\in A_2$. It is clear that
$$\Code_{\mathcal T}(\alpha)=(0\,|\,\,H_1,H_2,K_3,\dots,K_n)$$
for some subgroups $K_3,\dots,K_n\in\Sub(G)$. Let $\varepsilon$ be the partition of $\Orb(X)$ with $\{A_1,A_2\}$ being the only non-singleton class. Take two congruences $\beta,\gamma\in\Con_{\mathcal T}(G)$ such that
$$\Code_{\mathcal T}(\beta)=(\varepsilon\,|\,\,I,I,K_3,\dots,K_n)$$
and
$$\Code_{\mathcal T}(\gamma)=(\varepsilon\,|\,\,H_3,H_3,K_3,\dots,K_n).$$
It is easy to see that $\beta\subseteq\gamma$,
$$\Code_{\mathcal T}(\alpha\wedge\gamma)=(0\,|\,\,I,I,K_3,\dots,K_n)$$
and
$$\Code_{\mathcal T}(\alpha\vee\beta)=(\varepsilon\,|\,\,J,J,K_3,\dots,K_n).$$
Hence $\alpha\wedge\gamma\subseteq\beta$ and $\gamma\subseteq\alpha\vee\beta$. It means that the set 
$$\{\alpha,\beta,\gamma,\alpha\wedge\gamma,\alpha\vee\beta\}$$
is a sublattice of $\Con(X)$ isomorphic to $N_5$. Hence $\alpha$ is non-modular in $\Con(x)$.
\end{proof}

\begin{proposition}
\label{Stab in G-set is a modular} Let $X$ be a G-set such that $G = S_n$ and for any $x \in X$ and any nontrivial $g \in G$ we have $g(x) \neq x$. Let $\sigma$ be a simple congruence from $\Con(X)$. Then $\sigma$ is a modular element in the lattice $\Con(X)$ if and only if the following holds
\begin{itemize}
\item[\textup{i)}] $\Stab_{\sigma} (x)$ is a modular element in the lattice $\Sub(G)$ for any $x \in X$;
\item[\textup{ii)}]  there are no points $x$ and $y$ from different orbits such that the following holds
\begin{itemize}
\item[a)] $\Stab_{\sigma} (x), \Stab_{\sigma} (y) \in \{ T_{12},  T_{23},  T_{13}\}$;
\item[b)] $\Stab_{\sigma} (x) \in \{ T_{12},  T_{23},  T_{13}\}, \Stab_{\sigma} (y) =  A_3$;
\item[c)] $\Stab_{\sigma} (x), \Stab_{\sigma} (y) \in \{ I_{12,34},  I_{13,24},  I_{14,23}\}$;
\item[d)] $\Stab_{\sigma} (x) \in \{ I_{12,34},  I_{13,24},  I_{14,23}\}, \Stab_{\sigma} (y) =  A_4$.
\end{itemize}
\end{itemize} \qed
\end{proposition}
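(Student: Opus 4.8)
The plan is to prove both implications by treating modularity through its pentagon characterization: $\sigma$ is modular in $\Con(X)$ exactly when no $\beta\le\gamma$ makes the two sides of the modular law differ. For the \emph{necessity} direction I argue by contraposition. If (i) fails, some $\Stab_\sigma(x_i)=H_i$ is non-modular in $\Sub(G)$. Since $\sigma$ is simple it lies in the principal ideal $(\omega]=\SCon(X)$, which by Lemma~\ref{SCon-Sub} is $(\Sub(G))^n$, with $\sigma$ corresponding to the tuple $(\Stab_\sigma(x_1),\dots,\Stab_\sigma(x_n))$ of its orbit-restrictions. An element of a direct product of lattices is modular if and only if each coordinate is; hence a non-modular $H_i$ already makes $\sigma$ non-modular inside the sublattice $\SCon(X)$, and therefore in $\Con(X)$.

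For the failure of (ii) I reduce each configuration to Lemma~\ref{M3} by exhibiting the relevant copy of $M_3$. In $\Sub(S_3)$ the sets $\{T,T_{12},T_{13},T_{23},S_3\}$ and $\{T,T_{12},T_{13},C_{123},S_3\}$ are copies of $M_3$; in $\Sub(S_4)$ so are $\{V_4,I_{12,34},I_{13,24},I_{14,23},S_4\}$ and $\{V_4,I_{12,34},I_{13,24},A_4,S_4\}$ (each $I_{ij,st}$ being a Sylow $2$-subgroup containing $V_4$). In configurations (b) and (d) the two stabilizers are automatically distinct and appear as two atoms of one of these $M_3$'s, so Lemma~\ref{M3} applies at once. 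In configurations (a) and (c) the two stabilizers may coincide; there I first replace the point $y$ by $g(y)$ in its own orbit, choosing $g=(23)$, which conjugates $T_{12}$ to $T_{13}$ and $I_{12,34}$ to $I_{13,24}$. By Lemma~\ref{stabilizers are conjugated} this conjugates the stabilizer, yielding two \emph{distinct} atoms of the $M_3$, and Lemma~\ref{M3} again applies.

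For \emph{sufficiency} I verify $(\sigma\vee\beta)\wedge\gamma=(\sigma\wedge\gamma)\vee\beta$ for every $\beta\le\gamma$. The code calculus only describes congruences coordinated with a fixed transversal, so the first step is a lemma that a comparable pair $\beta\le\gamma$ admits a \emph{common} coordinating transversal $\mathcal T$: coordinate the finer congruence $\beta$ inside each of its connected classes of orbits, and then, within each $\gamma$-connected class, shift the base points of the remaining $\beta$-classes by a common group element to align them through $\gamma$ with a fixed anchor; as $\beta\subseteq\gamma$, this shift preserves $\beta$-coordination. The simple $\sigma$ is coordinated with every transversal. With $\sigma,\beta,\gamma\in\Con_{\mathcal T}(X)$ I evaluate both sides by Lemmas~\ref{order on codes} and~\ref{meet and join of codes}. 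Writing $\Code_{\mathcal T}(\sigma)=(0\mid H_1,\dots,H_n)$ and using that the codes of $\beta,\gamma$ are proper (hence their subgroup entries $B,C$ are constant on each $\pi_\beta$-, resp.\ $\pi_\gamma$-class), the equality reduces, for each $\pi_\beta$-class $S$ with common entries $B\le C$, to the single identity in $\Sub(G)$
\[
\Big(\bigvee_{j\in S}H_j\ \vee\ B\Big)\wedge C=\Big(\bigvee_{j\in S}(H_j\wedge C)\Big)\vee B .
\]

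The crux, and the main obstacle, is to extract this identity from (i) and (ii). The key observation is that the incomparable pairs among the modular subgroups of $S_n$ are \emph{precisely} those listed in configurations (a)--(d): for $n\le2$ there are none, for $n=3$ and $n=4$ a direct inspection of $\Sub(S_3)$ and $\Sub(S_4)$ confirms it, and for $n\ge5$ the modular subgroups form the chain $\{T,A_n,S_n\}$ while the subgroups named in (a)--(d) are non-modular and excluded by (i). Since (ii) is a condition on all points, the stabilizers $\{H_j\mid j\in S\}$ at the base points of one $\beta$-class are pairwise comparable, so they form a chain; thus $\bigvee_{j\in S}H_j$ is its top element $H^\ast$, which is one of the $H_j$ and hence modular by (i), and $\bigvee_{j\in S}(H_j\wedge C)=H^\ast\wedge C$ because meeting with $C$ preserves the total order. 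The displayed identity then collapses to the modular law $(H^\ast\vee B)\wedge C=(H^\ast\wedge C)\vee B$, valid because $H^\ast$ is modular. As the inclusion $(\sigma\wedge\gamma)\vee\beta\subseteq(\sigma\vee\beta)\wedge\gamma$ is automatic, equality follows and $\sigma$ is modular.
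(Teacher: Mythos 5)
Your proof is correct and follows essentially the same route as the paper: necessity via the decomposition $\SCon(X)\cong(\Sub(G))^n$ together with the $M_3$ sublattices $\{T,T_{12},T_{13},T_{23},S_3\}$, $\{T,T_{12},T_{13},A_3,S_3\}$, $\{V_4,I_{12,34},I_{13,24},I_{14,23},S_4\}$, $\{V_4,I_{12,34},I_{13,24},A_4,S_4\}$ fed into the pentagon lemma, and sufficiency via a transversal coordinated simultaneously with $\beta\subseteq\gamma$, the code calculus, and the observation that conditions (i) and (ii) force the transversal stabilizers to form a chain so that the identity collapses to the modular law for the top stabilizer. Your explicit justification that the incomparable pairs of modular subgroups of $S_n$ are exactly the configurations (a)--(d) is a welcome elaboration of a step the paper only asserts, but it is not a different method.
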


\begin{proof}\emph{Necessity}. The property~(i) follows immediately from Lemma~\ref{SCon-Sub} and Lemma~\ref{surjective homomorphism}.

We now proceed to prove~(ii). Arguing by contradiction, it is sufficient to check that the conditions of Lemma~\ref{M3} are satisfied in all cases a)--d), that is, to find a sublattice $L$ isomorphic to $M_3$ in $\Sub(G)$.

\medskip

\emph{Case} \textbf{a)} By Lemma~\ref{stabilizers are conjugated}, we can suppose that $\Stab_{\sigma}(x)=T_{12}$ and $\Stab_{\sigma}(y)=T_{13}$. We take the sublattice $\{T,T_{12},T_{13},T_{23},S_3\}$. 

\medskip

\emph{Case} \textbf{b)} By Lemma~\ref{stabilizers are conjugated}, we can suppose that $\Stab_{\sigma}(x)=T_{12}$. We take the sublattice $\{T,T_{12},T_{13},A_3,S_3\}$. 

\medskip

\emph{Case} \textbf{c)} By Lemma~\ref{stabilizers are conjugated}, we can suppose that $\Stab_{\sigma}(x)=I_{12,34}$ and \\$\Stab_{\sigma}(y)=T_{13,24}$. We take the sublattice $\{V_4,I_{12,34},I_{13,24},I_{14,23},S_4\}$. 

\medskip

\emph{Case} \textbf{d)} By Lemma~\ref{stabilizers are conjugated}, we can suppose that $\Stab_{\sigma}(x)=I_{12,34}$. We take the sublattice $\{V_4,I_{12,34},I_{13,24},A_4,S_4\}$. 

\bigskip

\emph{Sufficiency}. 
We are going to prove that for any congruences $\beta$ and $\gamma$ such that $\beta \subseteq \gamma$, the following equality holds:
\[
(\alpha \vee \beta) \wedge \gamma = (\alpha \wedge \gamma) \vee \beta.
\]

We will construct a transversal $\mathcal{T}$ which is coordinated with all of the congruences $\alpha$, $\beta$, and $\gamma$. 

Since $\alpha$ is a simple congruence, it is coordinated with any transversal. Thus, it suffices to construct a transversal that is coordinated with both $\beta$ and $\gamma$.

We begin with the following lemma.

\medskip

\begin{lemma}
\label{Gamma class}
Every $\gamma$-class contains at least one point from each orbit in the corresponding $\gamma^{*}$-class.
\end{lemma}

\begin{proof}
Let $x$ be a point in a $\gamma$-class $\Gamma$. Let $Y$ be an orbit in the corresponding $\gamma^{*}$-class, distinct from $G(x)$. Since $Y \,\gamma^{*}\, G(x)$, by the definition of $\gamma^{*}$, there exist elements $z \in G(x)$ and $y \in Y$ such that $z \,\gamma\, y$.

As $x$ and $z$ lie in the same orbit, there exists an element $g \in G$ such that $x = g(z)$. Since $\gamma$ is a congruence, $z \,\gamma\, y$ implies $g(z) \,\gamma\, g(y)$, and hence $x \,\gamma\, g(y)$. Therefore, the $\gamma$-class $\Gamma$ contains at least one point $g(y)$ from $Y$.
\end{proof}

Analogously, every $\beta$-class contains at least one point from each orbit of the corresponding $\beta^{*}$-class.

Now consider a $\gamma^{*}$-class $\Gamma_*$. Let $\Gamma$ be a $\gamma$-class contained in the union of orbits from $\Gamma_*$. Evidently, each $\gamma^{*}$-class is a union of several $\beta^{*}$-classes. For each $\beta^{*}$-class $B_*$ within $\Gamma_*$, fix a $\beta$-class $B$ such that $B \subseteq (\cap B_*) \cap \Gamma$. From each orbit that intersects $B$, select a single representative point. The set of all such chosen points forms the desired transversal~$\mathcal{T}$.

By Lemma~\ref{Con_T is a lattice}, both $(\alpha \vee \beta) \wedge \gamma$ and $(\alpha \wedge \gamma) \vee \beta$ are congruences coordinated with the transversal~$\mathcal{T}$. By definition,
\[
\left( ((\alpha \vee \beta) \wedge \gamma)_{\mathcal{T}}^{*} \,\middle|\, \Stab_{(\alpha \vee \beta) \wedge \gamma}(x_1), \, \Stab_{(\alpha \vee \beta) \wedge \gamma}(x_2), \, \dots, \, \Stab_{(\alpha \vee \beta) \wedge \gamma}(x_n) \right)
\]
is the code for the congruence $(\alpha \vee \beta) \wedge \gamma$, and
\[
\left( ((\alpha \wedge \gamma) \vee \beta)_{\mathcal{T}}^{*} \,\middle|\, \Stab_{(\alpha \wedge \gamma) \vee \beta}(x_1), \, \Stab_{(\alpha \wedge \gamma) \vee \beta}(x_2), \, \dots, \, \Stab_{(\alpha \wedge \gamma) \vee \beta}(x_n) \right)
\]
is the code for the congruence $(\alpha \wedge \gamma) \vee \beta$.

Now it is sufficient to show that these codes are equal. 

At first we need to show that ${((\alpha \vee \beta) \wedge \gamma)}^{*} = {((\alpha \wedge \gamma) \vee \beta)}^{*}$. 

\begin{align*}
{((\alpha \vee \beta) \wedge \gamma)}^{*} & =  {(\beta \wedge \gamma)}^{*} &  \,\, \text{since} \,\,\, \alpha \,\,\, \text{is simple}\\
 & =  \beta ^*&\,\, \text{because} \,\,\, \beta \subseteq \gamma 
\end{align*}

Since $\alpha$ is simple, ${(\alpha \wedge \gamma)}^{*}$ is trivial, and ${((\alpha \wedge \gamma) \vee \beta)}^{*} =  \beta ^*$.

Now, let us show that $\Stab_{((\alpha \vee \beta) \wedge \gamma)} (x_i) = \Stab_{((\alpha \wedge \gamma) \vee \beta)} (x_i)$ for all $x_i \in \mathcal T$. Note that, under the conditions i) and ii), all subgroups $\Stab_{\alpha}(x_i)$ are comparable. We have

\begin{align*}
\Stab_{(\alpha \wedge \gamma) \vee \beta} (x_i)
 & = \left(\bigvee_{j\colon(A_i,A_j) \in \beta} \Stab_{\alpha} (x_j) \wedge \Stab_{\gamma}(x_i)\right) \vee \Stab_{\beta} (x_i)\\[1.5ex]
 & =\left(\max_{j\colon(A_i,A_j) \in \beta} \Stab_{\alpha} (x_j) \wedge \Stab_{\gamma}(x_i)\right) \vee \Stab_{\beta} (x_i) \\[1.5ex]
  & = \left(\max_{j\colon(A_i,A_j) \in \beta} \Stab_{\alpha} (x_j) \vee \Stab_{\beta} (x_i)\right) \wedge \Stab_{\gamma}(x_i)  \\[1.5ex]
  & = \left(\bigvee_{j\colon(A_i,A_j) \in \beta}\Stab_{\alpha} (x_j) \vee \Stab_{\beta} (x_i)\right) \wedge \Stab_{\gamma}(x_i)  \\[1.5ex]
  & = \Stab_{((\alpha \vee \beta) \wedge \gamma)} (x_i).
\end{align*}
\end{proof}

Now let us finish the proofs of the main theorems.

\bigskip

\emph{Proof of the Theorem}~\ref{necessity}. Let $\mathbf V$ be a nil-variety which is a modular element of the lattice $\mathbb {SEM}$ or $\mathbb {EPI}$. 

Property $a)$ follows from Proposition~\ref{SEM cmod nil-nec}. To prove b), consider the set $X$ of all words $\sigma(\mathbf u)$ where $\sigma\in S(\alphabet(\mathbf u))$.  The set $X$ is a check set, so Propositions~\ref{nil-nec} and~\ref{Stab in G-set is a modular} apply.

For c), consider the set of all words $\sigma(\mathbf u)$ and $\sigma(\mathbf v)$  where $\sigma\in S(\alphabet(\mathbf u))$ and apply Propositions~\ref{nil-nec} and~\ref{Stab in G-set is a modular} again.\qed

\bigskip

\emph{Proof of the Theorem}~\ref{sufficiency}. This directly follows from Propositions~\ref{nil-suf} and~\ref{Stab in G-set is a modular}.\qed

\section*{Acknowledgements}

The authors thank M. V. Volkov and B. M. Vernikov for many useful discussions over the years on semigroup and epigroup varieties, as well as for their comments and valuable remarks on a preliminary version of this paper. The authors are also grateful to A. E. Guterman for his remarks on the manuscript and for his attention to the work.

\end{document}